\pgfplotsset{compat=1.8}
      \newtheorem{thm}{Theorem}[section]
      \newcommand{\bthm}{\begin{thm}} \newcommand{\ethm}{\end{thm}}
      \newtheorem{prop}[thm]{Proposition}
      \newcommand{\bprp}{\begin{prop}} \newcommand{\eprp}{\end{prop}}
      \newtheorem{fact}[thm]{Fact}
      \newcommand{\bfct}{\begin{fact}} \newcommand{\efct}{\end{fact}}
      \newtheorem{prob}[thm]{Problem}
      \newcommand{\bprb}{\begin{prob}} \newcommand{\eprb}{\end{prob}}
      \newtheorem{lem}[thm]{Lemma}
      \newcommand{\blem}{\begin{lem}} \newcommand{\elem}{\end{lem}}
      \newtheorem{claim}[thm]{Claim}
      \newcommand{\bclm}{\begin{claim}} \newcommand{\eclm}{\end{claim}}
      \newtheorem{cor}[thm]{Corollary}
      \newcommand{\bcor}{\begin{cor}} \newcommand{\ecor}{\end{cor}}
      \newtheorem{conj}[thm]{Conjecture}
      \newcommand{\bcnj}{\begin{conj}} \newcommand{\ecnj}{\end{conj}}
      \theoremstyle{definition}
      \newtheorem{defn}[thm]{Definition}
      \newcommand{\bdfn}{\begin{defn}} \newcommand{\edfn}{\end{defn}}
      \theoremstyle{remark}
      \newtheorem{rem}[thm]{Remark}
      \newcommand{\brem}{\begin{rem}} \newcommand{\erem}{\end{rem}}
      \newtheorem{cnv}[thm]{Convention}
      \newcommand{\bcnv}{\begin{cnv}} \newcommand{\ecnv}{\end{cnv}}
      \newtheorem{exam}[thm]{Example}
      \newcommand{\bexm}{\begin{exam}} \newcommand{\eexm}{\end{exam}}
      \newcommand{\bpf}{\begin{proof}} \newcommand{\epf}{\end{proof}}
      \newtheorem{exer}[thm]{Exercise}
      \newcommand{\bexer}{\begin{exer}} \newcommand{\eexer}{\end{exer}}
      \newcommand{\ben}{\begin{enumerate}}
      \newcommand{\een}{\end{enumerate}}
      \newcommand{\bit}{\begin{itemize}}
      \newcommand{\eit}{\end{itemize}}
        \newcommand{\reals}{{\mathbb R}}
        \newcommand{\Z}{{\mathbb Z}}
        \newcommand{\N}{{\mathbb N}}
        \newcommand{\pr}{\mathbb{P}}
        \newcommand{\Pset}{\mathbf{p}}
        \newcommand{\Qset}{\tilde{\mathbf{p}}}
        \newcommand{\Eset}{\mathbf{E}}
	\newcommand{\EQset}{\tilde{\textbf{E}}}
        \newcommand{\E}{\mathbb{E}}
        \newcommand{\ZQ}{\mathcal{Z}}
        \newcommand{\renP}{\mathbf P}
	\newcommand{\renE}{\mathbf E}
	\newcommand{\renZ}{\mathbf Z}        
        \newcommand{\ignore}[1]{}
       \newcommand{\one}{\mathds{1}}
	\newcommand{\dif}{\text{d}}
\definecolor{Red}{rgb}{1,0,0}
\definecolor{Blue}{rgb}{0,0,1}
\definecolor{Olive}{rgb}{0.41,0.55,0.13}
\definecolor{Yarok}{rgb}{0,0.5,0}
\definecolor{Green}{rgb}{0,1,0}
\definecolor{MGreen}{rgb}{0,0.8,0}
\definecolor{DGreen}{rgb}{0,0.55,0}
\definecolor{Yellow}{rgb}{1,1,0}
\definecolor{Cyan}{rgb}{0,1,1}
\definecolor{Magenta}{rgb}{1,0,1}
\definecolor{Orange}{rgb}{1,.5,0}
\definecolor{Violet}{rgb}{.5,0,.5}
\definecolor{Purple}{rgb}{.75,0,.25}
\definecolor{Brown}{rgb}{.75,.5,.25}
\definecolor{Grey}{rgb}{.5,.5,.5}
    \date{\today}
    \title{Scaling limit of wetting models in 1+1 dimensions pinned to a shrinking strip}
    \author{Jean-Dominique Deuschel\footnote{Technische Universit\"at Berlin, {\tt deuschel@math.tu-berlin.de}}\, and Tal Orenshtein\footnote{Technische Universit\"at Berlin and Humboldt Universit\"at Berlin, {\tt orenshtein@tu-berlin.de}}
   }
\begin{document}

  \date\today
    \maketitle

    \begin{abstract}
We consider wetting models in 1+1 dimensions with a general pinning function on a shrinking strip.
 We show that under diffusive scaling, the interface converges in law to to the reflected Brownian motion, 
 whenever the strip size is $o(N^{-1/2})$ and the pinning function is close enough to critical value of the 
 so-called $\delta$-pinning model
of Deuschel, Giacomin, and Zambotti \cite{deuschel2005scaling}. 
As a corollary, the same result holds for the constant pinning strip wetting model at criticality 
with order $o(N^{-1/2})$ shrinking strip.
 \end{abstract}
\hfil

\thanks{\textit{2010 Mathematics Subject Classification:} 60K05, 60K15, 60K35, 82B27, 82B41}

\thanks{\textit{Key words:}\quad
$\delta$-pinning, wetting, strip wetting, interface system, scaling limit, 
zero-set, contact set, dry set, renewal process, Markov renewal process, entropic repulsion.}

\tableofcontents

\section{Introduction}\label{sec:intro}

\subsection{The standard wetting model}\label{subsec:standard model}

Let $(S_k)_{k=0,1,...}$ be a random walk with increments $S_k-S_{k-1}$, $k\ge 1$, which are i.i.d with law $\pr$. 
We assume that $\pr$ has a continuous probability density of the form 
$\rho(x)=\frac{1}{\kappa}e^{-V(x)}$, so that $V$ is symmetric and strictly convex (in the sense that $V$ in $C^2$ and $V''(x)\in[1/c,c]$ for some $c>1$). 
Symmetry then implies that $\E [S_1]=0$. We assume also that the normalizing constant $\kappa$ is so that $\E [S_1^2]=1$.

Denote by by $\pr _x$ the law of $S$, starting at $x\in\reals$,
and let $\E _x$ be the corresponding expectation function. 
For ease of notation we let $\pr=\pr_0$ and $\E=\E_0$. 

As a convention 
throughout the paper
expressions of the form $\pr_x[A,S_N=y]=\E_x[\one_{A}\one_{\{y\}}(S_N)]$, are to be read as the density 
of $S_N$ at $y$ with respect to the measure $\pr_x$ on the event $A$,
More explicitly, for a random variable $Y$,
\begin{equation}\label{eq:density on events}
\E_x[Y\one_{\{y\}}(S_N)]:=\lim_{\epsilon\to 0}\frac{1}{2\epsilon}\E_x[Y\one_{[y-\epsilon , y+ \epsilon]}(S_N)]. 
\end{equation}

\emph{The standard wetting model}, also called the $\delta$-pinning model, was introduced in \cite{deuschel2005scaling}. 
It is a measure on $\reals^N_+$ where two possible boundary conditions are considered, 
\emph{free} and \emph{constraint}. 
The constraint case is defined by 
\begin{equation}\label{eq:StandardPinningConstrainedDef1}
P_{\beta,N}^c(\dif x) =\frac{1}{Z_{\beta,N}^c }
\exp\left(-\sum_{i=1}^{N}V(x_{i}-x_{i-1})\right)
\prod _{i=1}^N \left(\dif x_i\one_{[0,\infty)}+e^\beta \delta_0(\dif x_i)\right),
\end{equation}
where $x_0=x_N=0$. 
Analogously, the free case is defined by 
\begin{equation}\label{eq:StandardPinningFreeDef1}
P_{\beta,N}^f (\dif x) =\frac{1}{Z_{\beta,N}^f }
\exp\left(-\sum_{i=1}^{N}V(x_{i}-x_{i-1})\right)
\prod _{i=1}^N \left(\dif x_i\one_{[0,\infty)}+ e^\beta \delta_0(\dif x_i)\right),
\end{equation}
where $x_0=0$. 
Here $\dif x_i$ is the Lebesgue measure on $\reals$, and the partition functions $Z_{\epsilon,N}^c$ and $Z_{\epsilon,N}^f$ are normalizing constants so that 
$P_{\beta,N}^c$ and $P_{\beta,N}^f$ are probability measures on $\reals_+^N$. 

A remarkable localization transition was proved in \cite{deuschel2005scaling} using a 
renewal structure naturally corresponding to the model. On the heuristic level, the conditioned law on the contact set,  
the excursions from zeros are independent and their law is independent of the pinning parameter. 
Hence one expects to see that under the conditioning, the (appropriately rescaled interpolated) excursions converge the Brownian excursions. 
To analyze the full path one therefore needs an understanding of the contact set distribution.
Whenever $N$ is large, the contact set looks like a renewal process with inter-arrival distribution expressed in terms of the Green function of the walk.

In particular, making the above intuition accurate and quantitative, in \cite{deuschel2005scaling} 
(and tailored for renewal theory techniques in \cite{caravenna2006sharp}) the authors proved that 
there exists some $\beta_c\in\reals$, explicitly defined in \eqref{eq:def of critical beta} below, 
so that under the standard diffusive 
scaling and interpolation to continuous paths on $[0,1]$ the following a limit in distribution holds, with the following laws:
\begin{itemize}
 \item For $\beta<\beta_c$, the Brownian meander (free case) or the Brownian excursion (constrained case).
 \item For $\beta>\beta_c$, a mass-one measure on the constant zero function.
 \item For $\beta=\beta_c$, the reflecting Brownian motion (free case) or the reflecting Brownian bridge (constrained case).
 \end{itemize}
Moreover, $\beta_c$ is explicit in terms of the random walk density $\rho$. In particular, 
\begin{equation}\label{eq:def of critical beta}
e^{-\beta_c}=\sum_{n=1}^\infty f_n,  
\end{equation}
where $f_n:=\pr_0[\mathcal{C}_n, S_n=0]$ is the density of $S_n$ at zero on the event  
$\mathcal{C}_n=\{S_1\ge 0,...,S_n \ge 0\}$ (remember \eqref{eq:density on events}). 
We remark already at this stage that 
\begin{equation}\label{eq:asymptotic of fn} 
f_n=\frac{1}{\sqrt{2\pi}}n^{-3/2} + o(n^{-3/2})   
\end{equation}
 {and moreover, in the Gaussian case $V(x)=\frac{1}{2}x^2$,  the error term is identically zero
\cite[Lemma 1]{deuschel2005scaling} 
(see also \eqref{eq:def of f n a x y} and a few lines below it) and in particular $\beta_c=\log \left(\frac{1}{\sqrt{2\pi}}\sum_{n=1}^\infty n^{-3/2}\right)$.
}
\subsection{The strip wetting model with general pinning function}\label{subsec:the model}
 
\emph{The strip wetting model} is the analogous measures on $\reals_0^N$ which we now define.
Fix a one-parameter family of functions $\{\varphi_a, a\in(0,a_0]\}$, 
so that $\varphi_a:\reals_+\to\reals$ and $\int_0^a e^{\varphi_a(x)}{\dif}x$ is finite for $0<a\le a_0$, 
where ${\dif}x$ is the Lebesgue measure on $\reals$. 
Let $\mathcal{C}_N$ be the event $\{S_1\ge 0,...,S_N\ge 0\}$. 
We define now $\pr^\alpha_{ \varphi_a,N}$ and $\alpha\in\{c,f\}$. 
Whenever we would like to emphasize the pinning functions we also call them \emph{the $\varphi_a$-wetting model}.     
The case of free boundary conditions is defined by the Radon-Nikodym derivative
\begin{equation}\label{eq:ModelDefFree}
{\dif{\pr}^f_{ \varphi_a,N}} (S)= \frac{1}{\ZQ^f_{ \varphi_a,N}}
 \exp\left(\sum_{n=1}^N\varphi_a(S_n)\one_{[0,a]}(S_n)\right)\one_{\mathcal{C}_N}{\dif\pr}(S),
\end{equation}
while the constraint case is defined by the Radon-Nikodym derivative
\begin{equation}\label{eq:ModelDefConstraint}
{\dif{\pr}^c_{ \varphi_a,N}} (S)= \frac{1}{\ZQ^c_{ \varphi_a,N}}
 \exp\left(\sum_{n=1}^N\varphi_a(S_n)\one_{[0,a]}(S_n)\right)\one_{[0,a]}(S_{N})\one_{\mathcal{C}_N}{\dif\pr}(S).
\end{equation}

The normalizing constants $\ZQ^f_{ \varphi_a,N}$ and $\ZQ^c_{ \varphi_a,N}$ are called the partition functions.
When we want to specify the initial and ending points, we also define the density at $y\in \reals_+$ by
\begin{equation}\label{eq:stripModelGenFucn}
\ZQ^c_{ \varphi_a,N}(x,y)= \E_x\left[ \exp\left(\sum_{n=1}^N\varphi_a(S_n)\one_{[0,a]}(S_n)\right)\one_{\{y\}}(S_{N})\one_{\mathcal{C}_N} \right], x\in \reals_+,\,\,N\ge 1,
\end{equation}
so that
\begin{equation*}
\ZQ^c_{\varphi_a,N}= \int_{0}^a \ZQ^c_{ \varphi_a,N}(0,y)\dif y.
\end{equation*}

The connection between the strip and the standard wetting models is discussed in Appendix \ref{appendix:app3}.

\subsection{Main results}\label{sec:main results}

As mentioned in the Introduction this paper deals with strip models approximating the critical standard wetting model 
in a regularizing way. The regularization is due to the fact we allow the pinning functions $\varphi_a$ to be smooth.
The approximation is due to the fact the strip size $a$ is taken to zero with the model size $N$.

As we shall see in Chapter \ref{sec:application for critical strip wetting}, as an application we 
prove that the strip wetting model with constant pinning $\beta_c(a_N)$ 
has the same asymptotic behavior as the critical standard wetting model, whenever 
the strip size $a_N$ is decaying asymptotically faster than $\frac{1}{\sqrt N}$. 

We start with some notations. For a path $(S_i)_{i\ge 0}$, let $\tau_0^a=0$, 
$\tau_{j}^a=\inf\{n>j:S_j\in [0,a]\}$, $\ell_N^a=\sup\{k:S_k\in [0,a]\}$.
Let $\mathcal{A}^a_N=\{\frac{\tau_j}{N}: j\le \ell^a_N\}\subset[0,1]$ be the zero-set up to time $N$. 
Define now for $A=\{t_1<...,t_{|A|}\}$, $0=:t_1<...<t_{|A|}\le N$,
\begin{equation}\label{eq:def of measure on sets with varphi}
\Qset^\alpha_{ \varphi_a,N}(\mathcal{A}^a_N=A/N):=
\pr^\alpha_{ \varphi_a,N} (\tau^a_i=t_i, i\le \ell^a_N), 
\end{equation}
and $\EQset^\alpha_{ \varphi_a,N}$, $\alpha\in\{c,f\}$, the corresponding expectation.
In a somewhat abuse of notation we use $\Qset^c_{ \varphi_a,N}(A)$ and 
$\Qset^c_{ \varphi_a,N}(\mathcal{A}^a_N=A/N)$ with no distinction.
Note that by definition $\Qset^c_{ \varphi_a,N}(A)=0$ whenever $\ell^a_N(A)<N$.


\begin{defn}\label{def:cond A} 
 We say that that $(\varphi_a)_{0<a<a_0}$ satisfies Condition (A) if there is a constant $C>0$
 such that, uniformly in $x\in [0,a]$,
 $$-C\le \frac{1}{a}\log\int_0^a e^{\varphi_a(x)-\beta_c}\dif x\le C, $$ 
 for all $0<a<a_0$. Where $\beta_c$ was defined in \eqref{eq:def of critical beta}. 
\end{defn}

\begin{rem}
 Note that Condition (A) guarantees that that for $N$ fixed, the $\varphi_a$-wetting model 
 converges weakly to the critical standard wetting model as $a$ tends to $0$, see more in Appendix \ref{appendix:app3}.    
\end{rem}

The content of the next theorem is a scaling limit of the contact sets. 
For that we shall use the Matheron
topology on close real sets \cite{matheronrandom}. 
The basic notions can be found in \cite[page 209]{giacomin2007random},
\cite[Chapter 7]{deuschel2005scaling}, and \cite[Appendix B]{caravenna2006sharp}. 

\begin{defn}
Let $B$ be a standard one-dimensional Brownian motion (resp.\ bridge from $0$ to $1$).
We call the random set $\{t\in[0,1]: B_t=0\}$ the \emph{Brownian motion (resp.\ bridge) zero-set}.
\end{defn}

\begin{thm}\label{thm:main thm general zero set limit}
Fix some sequence $a_N=o(N^{-1/2})$. Assume that $\varphi_a$ satisfies Condition (A) from definition \ref{def:cond A}. 
Then under $\Qset^{\alpha}_{\varphi_{a_N},N}$, seen as a probability measure on the Matheron topological space of closed sets of $[0,1]$, 
the set $\mathcal{A}_N$ is converging in distribution to the Brownian motion zero-set for $\alpha = f$, and to the
Brownian bridge zero-set for $\alpha =c$.
\end{thm}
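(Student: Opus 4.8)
The plan is to reduce the statement to a convergence of renewal-type processes, following the strategy pioneered in \cite{deuschel2005scaling} and sharpened with renewal-theoretic tools in \cite{caravenna2006sharp}, but with the twist that the inter-arrival law now depends on the strip size $a_N$. First I would derive an explicit expression for the law $\Qset^{\alpha}_{\varphi_{a_N},N}$ of the contact set $\mathcal{A}^{a_N}_N$ as (the law of) a Markov renewal process: decomposing a path according to its successive visits to the strip $[0,a]$, the partition function factorizes, and the probability that $\mathcal{A}^{a_N}_N=\{t_1,\dots,t_k\}$ (with $0=t_1<\cdots<t_k$, and $t_k=N$ in the constrained case) takes the product form $\frac{1}{\ZQ^{\alpha}_{\varphi_{a_N},N}}\prod_{i} K_{a_N}(t_i-t_{i-1};x_{i-1},x_i)$, times a boundary term, integrated over the strip-valued "marks" $x_i\in[0,a_N]$; here $K_{a}(n;x,y)$ is essentially $\ZQ^c_{\varphi_a,n}(x,y)$ restricted to first return. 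The key analytic input is then a sharp asymptotic for this kernel: using the convexity assumptions on $V$ and the local limit theorem for $S$, together with the fact that $a_N=o(N^{-1/2})$ is much smaller than the diffusive scale, I expect $\int_0^{a_N}\!\!\int_0^{a_N} K_{a_N}(n;x,y)\,\dif x\,\dif y \sim c\, a_N^2\, n^{-3/2}$ uniformly on the relevant range, because on this scale the strip is effectively a single point and one recovers the $f_n \sim \frac{1}{\sqrt{2\pi}} n^{-3/2}$ behaviour from \eqref{eq:asymptotic of fn}. Condition (A) is exactly what guarantees that the $e^{\varphi_{a_N}}$ weight contributes only a bounded multiplicative correction $e^{O(a_N)} = 1+o(1)$, so it does not destroy criticality.

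Having the renewal representation, the second step is to show that after diffusive scaling the marks $x_i$ decouple and one is left with an effectively one-dimensional (unmarked) renewal process whose inter-arrival distribution has the $n^{-3/2}$ tail, i.e.\ is in the domain of attraction of the $\tfrac12$-stable subordinator. The point of the regime $a_N = o(N^{-1/2})$ is that the number of contact points is $\Theta(\sqrt{N})$ and each excursion, rescaled, behaves like a Brownian excursion, so the rescaled zero set $\mathcal{A}^{a_N}_N$ converges to the zero set of the $\tfrac12$-stable regenerative set — which is precisely the zero set of Brownian motion (free case) or of the Brownian bridge (constrained case, where the constraint $t_k=N$ enforces a return at time $1$). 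Concretely, I would invoke the general convergence criterion for (marked) renewal processes toward regenerative sets in the Matheron/Hausdorff topology, as developed in \cite[Appendix B]{caravenna2006sharp} and used in \cite[Chapter 7]{deuschel2005scaling}: it suffices to check (i) convergence of the rescaled inter-arrival law to the stable one, which follows from the kernel asymptotic above, and (ii) a uniform control (tightness) on the partition function ratios $\ZQ^{\alpha}_{\varphi_{a_N},N}$, typically of the form $\ZQ^{f}_{\varphi_{a_N},N}\asymp a_N^2$ and $\ZQ^{c}_{\varphi_{a_N},N}(0,\cdot)\asymp a_N^2 N^{-1/2}$ up to constants, obtained by summing the renewal kernel.

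The main obstacle I anticipate is the uniformity of the kernel asymptotic $\int\!\!\int K_{a_N}(n;x,y)\,\dif x\,\dif y \sim c\,a_N^2 n^{-3/2}$ over the full range of $n$ simultaneously with the $a_N\to 0$ limit — in particular controlling the error term uniformly for small $n$ (where the local CLT is weak) and for $n$ close to $N$, and making sure the $x,y$-dependence of $K_{a_N}(n;x,y)$ is genuinely negligible (entropic-repulsion-type estimates showing the walk does not feel where in the tiny strip it enters or exits). This is where the strict convexity of $V$, the local limit theorem with uniform remainder, and possibly a coupling with the Gaussian case (for which \cite[Lemma 1]{deuschel2005scaling} gives exact formulae) enter; the bookkeeping of these two-scale estimates ($a_N$ versus $\sqrt{N}$) is the technical heart of the argument. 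Once that uniform estimate is in hand, the passage to the regenerative-set limit and the identification of the limit with the Brownian zero set are standard, and the constrained case follows from the free case by the usual conditioning-on-$\{N\in\mathcal{A}^{a_N}_N\}$ argument together with the corresponding bridge local limit theorem.
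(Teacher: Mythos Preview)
Your approach is in the right spirit but diverges from the paper's route in one structural respect, and that is also where a gap lurks. You propose to analyze the strip model's Markov-renewal structure directly and then ``invoke the general convergence criterion for (marked) renewal processes toward regenerative sets'' from \cite[Appendix B]{caravenna2006sharp}. The difficulty is that those criteria are formulated for a \emph{fixed} renewal law; here the inter-arrival kernel depends on $a_N$ and hence on $N$, so you are in a triangular-array regime for which there is no off-the-shelf statement to cite. Supplying that step from scratch would essentially amount to re-proving Sohier's near-critical analysis \cite{sohier2009finite}, which is precisely what the paper borrows rather than redoes.

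The paper sidesteps the issue by a comparison argument. The uniform kernel control you anticipate \emph{is} established --- Lemmas~\ref{lem:comparing excursions kernel to pinning} and~\ref{lem:estimating fn(a)} give $e^{-C_0 a}\le f^a_n(x,y)/f_n\le 1$ uniformly in $n\ge1$ and $x,y\in[0,a]$, so your worry about uniformity for small or large $n$ evaporates: these are multiplicative bounds, not asymptotic equivalences. But instead of feeding this into a direct regenerative-set limit theorem, the paper combines it with Condition~(A) to sandwich the Radon--Nikodym derivative of $\Qset^\alpha_{\varphi_{a_N},N}$ against the \emph{standard} $\delta$-pinning contact-set law $\Pset^\alpha_{\beta_c\pm C'a_N,N}$ at a slightly perturbed pinning strength (Lemma~\ref{lem:derivative varphi wetting to near crit standard}), with partition-function ratios tending to $1$. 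Since $a_N=o(N^{-1/2})$ this perturbation is $o(N^{-1/2})$-near-critical, and Proposition~\ref{prop:near crit standard wetting and renwal scaling limit} --- a short $\epsilon\downarrow0$ squeeze on top of Sohier's Theorem~\ref{thm:SohierNearCrit} --- shows that such near-critical standard models share the exactly-critical Brownian zero-set limit. The two Radon--Nikodym bounds then squeeze $\EQset^\alpha_{\varphi_{a_N},N}[\Phi(\mathcal A_N)]$ to that common limit for every bounded test functional $\Phi$. In short: same underlying kernel estimates as you envisage, but the organizing idea is to reduce to an $o(N^{-1/2})$-near-critical \emph{standard} wetting model and reuse existing machinery, rather than to attempt a triangular-array regenerative-set convergence directly.
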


We also have a full path scaling limit.
\[
X^{(N)}_t:=\frac{1}{N^{1/2}} X_{\lfloor Nt \rfloor }+  \frac{1}{N^{1/2}} (Nt -\lfloor Nt \rfloor) (X_{\lfloor Nt \rfloor+1} - X_{\lfloor Nt \rfloor}). 
\]
\begin{thm}\label{thm:main thm general path limit}
If $a_N=o(N^{-1/2})$ then the process $(X^{(N)}_t)_{t\in[0,1]}$ under $\pr^{\alpha}_{\varphi_{a_N},N}$
converges weakly in $C[0,1]$ to the reflected Brownian motion on $[0, 1]$ for $\alpha = f$ and to the reflected
Brownian bridge on $[0, 1]$ for $\alpha =c$.
\end{thm}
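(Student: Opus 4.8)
The strategy is to upgrade the contact-set scaling limit of Theorem~\ref{thm:main thm general zero set limit} to a statement about the whole path by the excursion-gluing method (as in \cite[Chapter~7]{deuschel2005scaling}). I describe the free case $\alpha=f$; the constrained case $\alpha=c$ is identical, with the terminal ``meander'' block replaced by an ``excursion'' block pinned near $0$ and the limit a reflected Brownian bridge.

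First I would set up the renewal decomposition. Fix $N$, write $a=a_N$, and condition on the contact set $\mathcal A^{a}_N=\{0=t_0<\dots<t_k\}$ and on the contact heights $u_i:=S_{t_i}\in[0,a]$. Since $\varphi_a$ in \eqref{eq:ModelDefFree} acts only on coordinates lying in $[0,a]$ while $S_j>a$ for every $j$ strictly between two consecutive contact times, all of the $\varphi_a$-weight sits at the contact points; hence it is entirely absorbed into the law $\Qset^{f}_{\varphi_a,N}$ of $\mathcal A^a_N$ and the conditional law of the $u_i$ --- objects Theorem~\ref{thm:main thm general zero set limit} already controls. Conditionally on $(\mathcal A^a_N,(u_i)_i)$ the blocks $(S_{t_{i-1}},\dots,S_{t_i})$, $i\le k$, are then independent, the $i$-th being a random-walk bridge from $u_{i-1}$ to $u_i$ of length $t_i-t_{i-1}$ conditioned to stay $>a$ on $(t_{i-1},t_i)$, with an analogous meander-type law for the terminal block $(S_{t_k},\dots,S_N)$.

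Next I would prove tightness of $(X^{(N)})$ in $C[0,1]$ and identify the limit. For tightness, the oscillation of $X^{(N)}$ over a window of width $\eta$ is at most $a/\sqrt N$ plus the largest rescaled oscillation of a single block that the window meets; a nonnegative length-$m$ random-walk bridge/meander with endpoints in $[0,a]$ satisfies $\max_j|S_j|=O(\sqrt m)$ with Gaussian tails uniformly in $a\le a_0$, and Theorem~\ref{thm:main thm general zero set limit} ensures that with high probability every block other than the one straddling a fixed macroscopic time has length $\le\eta' N$; the Kolmogorov--Chentsov criterion follows. For identification, invoke Skorokhod's representation to realize, on one probability space, $X^{(N)}\to X_\infty$ uniformly together with $\mathcal A^a_N/N\to\mathcal A_\infty$ in the Matheron topology, where $\mathcal A_\infty$ is the zero-set of a reflected Brownian motion $R$. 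One checks that the zero-set of $X_\infty$ is exactly $\mathcal A_\infty$ (``$\supseteq$'' is immediate from $a/\sqrt N\to0$; ``$\subseteq$'' holds because a macroscopic complementary interval of $\mathcal A_\infty$ corresponds to a block whose rescaled maximum stays bounded away from $0$ in probability). Conditionally on $\mathcal A_\infty$, on each complementary interval $(g_j,d_j)$ the rescaled $j$-th block converges, by the conditional invariance principle for random-walk excursions pinned near $0$ (\cite[Chapter~7]{deuschel2005scaling}, \cite{caravenna2006sharp}), to a normalized Brownian excursion of duration $d_j-g_j$, independently across $j$, and the terminal block to a Brownian meander. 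By the It\^{o} excursion description of reflected Brownian motion, a nonnegative continuous path with zero-set $\mathcal A_\infty$ and these conditional excursion laws is a reflected Brownian motion; hence $X_\infty\stackrel{d}{=}R$, and since every subsequential limit has this law the full sequence converges.

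The main obstacle is the uniformity of the excursion invariance principle near the wall: one must show that a random-walk bridge of length $m$ from $u$ to $v$, with $u,v\in[0,a_N]$, conditioned to remain $>a_N$, converges after rescaling by $\sqrt m$ to the normalized Brownian excursion, with error estimates uniform in $u,v$ and in $a_N$ and valid across the range of block lengths $m$ produced by the zero-set limit. This rests on sharp local-limit estimates for excursions, and it is precisely here that the rate of $a_N$ enters: the hypothesis $a_N=o(N^{-1/2})$ makes the wall at height $a_N$ invisible at diffusive scale ($a_N/\sqrt m\to0$ for every macroscopic block) and keeps Condition~(A) (Definition~\ref{def:cond A}) compatible with the $\delta$-pinning limit, so that the zero-set law inherited from Theorem~\ref{thm:main thm general zero set limit} is the correct one. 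Once these uniform bounds are in place --- ideally by reusing the renewal and local-limit machinery developed for Theorem~\ref{thm:main thm general zero set limit} --- the gluing itself is routine.
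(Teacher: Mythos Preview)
Your proposal follows the same excursion-gluing scheme as the paper (decompose at contact times, use Theorem~\ref{thm:main thm general zero set limit} for the contact set, then lift to paths via an invariance principle for the conditioned blocks), and the identification of the limit via Skorokhod representation and the It\^o description is essentially how the paper proceeds in Section~\ref{sec:Path scaling limit}. Two points are worth flagging. First, the paper handles tightness differently and more cleanly than your sketch: rather than controlling block oscillations directly, it introduces a modified oscillation $\tilde\Gamma^a(\delta)$ restricted to excursion-connected times, uses a pathwise comparison (Lemma~\ref{lem:osillation comparison}) to reduce each conditioned block to the $(0,0)$-endpoint case at the cost of a factor $e^{C_0 a}$ per block, and then invokes the Radon--Nikodym bounds of Lemma~\ref{lem:derivative varphi wetting to near crit standard} to absorb the product $e^{C_0 a_N|A|}$ into a shift of the pinning parameter; this reduces everything to tightness of the \emph{near-critical standard} wetting model $\pr^\alpha_{\beta_c+c\,a_N,N}$, which is then imported from \cite{caravenna2018maximum}. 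Second, your diagnosis of where the rate $a_N=o(N^{-1/2})$ enters is slightly off: the uniform block invariance principle (Caravenna--Chaumont, the paper's Theorems~\ref{thm:Caravenna-Chaumont conditioned bridges}--\ref{thm:conditioned bridges with negative endpoints}) needs only $a_N=o(\sqrt N)$; the sharper rate is used \emph{not} for the excursions but for the contact-set step and, in the tightness argument, so that the partition-function ratios $Z^\alpha_{\beta_c+r a_N,N}/Z^\alpha_{\beta_c+s a_N,N}\to1$ and the factor $e^{C_0 a_N|A|}$ can be traded for an $O(a_N)$ shift in $\beta$.
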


\subsection{Examples}\label{subsec:examples}

\subsubsection{Constant pinning}\label{subsec:example: Strip wetting}
We call the model \emph{the strip wetting model with constant pinning} whenever the pinning function is constant 
on the strip, i.e., for some $\beta=\beta(a)\in\reals$
$\varphi_a(x)=\beta$, $x\in[0,a]$.

This model was suggested in Giacomin's monograph \cite[Equation (2.57)]{giacomin2007random} as an open problem,
and a major progress was done by Sohier \cite{sohier2013scaling,sohier2015scaling}. Application of our results 
in this case are presented in Section \ref{sec:application for critical strip wetting}. 

\subsubsection{Smooth approximation of the critical standard model}\label{subsec:example: Smooth approximation}
 {We construct a function $\varphi_a\in C^\infty(\reals)$ supported on $[0,a]$ so that it satisfies Condition (A) 
from Definition \ref{def:cond A}.

Let 
$$f(x):=\begin{cases}
           e^{-1/x} & x>0 \\
           0 &  x\le 0
           \end{cases}.$$ 
It is easy to verify that the derivatives of $f$ at $0$ vanish and hence it is $C^{\infty}(\reals)$.
Choose some $\epsilon(a)\to 0$ as $a\to 0$ with the rate of decay to be specified later-on and 
let 
\[
g_a(x)= \epsilon(a) + \frac{1}{a}
\frac{f(\frac{a-x}{\epsilon(a)})} {f(1-\frac{a-x}{\epsilon(a)}) + f(1\frac{a-x}{\epsilon(a)})}.
\]
It is easy to check that $\epsilon(a) \le g(x)\le 1/a + \epsilon(a)$, $g(x) = 1/a + \epsilon(a)$ if $x\le a-\epsilon{a}$, 
and $g(x) = \epsilon(a)$ if $x\ge a$. Therefore
$ (1/a + \epsilon(a))(a-\epsilon(a)) \le \int_0^a g(x)\dif x \le (1/a + \epsilon(a))a$. 
Therefore, choosing $\epsilon(a)\le a^2$ then there is some constant $C>0$ so that for all $a$ small enough 
$$e^{-Ca}\le 1 + a\epsilon(a) - \epsilon(a)/a + \epsilon(a)^2 \le \int_0^a g_a(x)\dif x \le 1 + a\epsilon(a)\le e^{Ca}.$$
We remark that $\exp(\beta_c)\equiv {\sqrt{2\pi}}/ \sum_{n\ge1} n^{-3/2}  \approx 0.961849$.
Set $\varphi_a(x):=\left(\beta_c + \log g_a(x)\right)\one_{\reals_+}(x), x\in\reals$, where $\epsilon(a)=a^2$.
See Figure \ref{fig:graph} for a graphical presentation.
Then $\varphi_a\in C^\infty([0,a])$ and satisfies Condition $A$ from Definition \ref{def:cond A}.  

\begin{figure}[!htb]
\centering
\includegraphics[scale=.9]{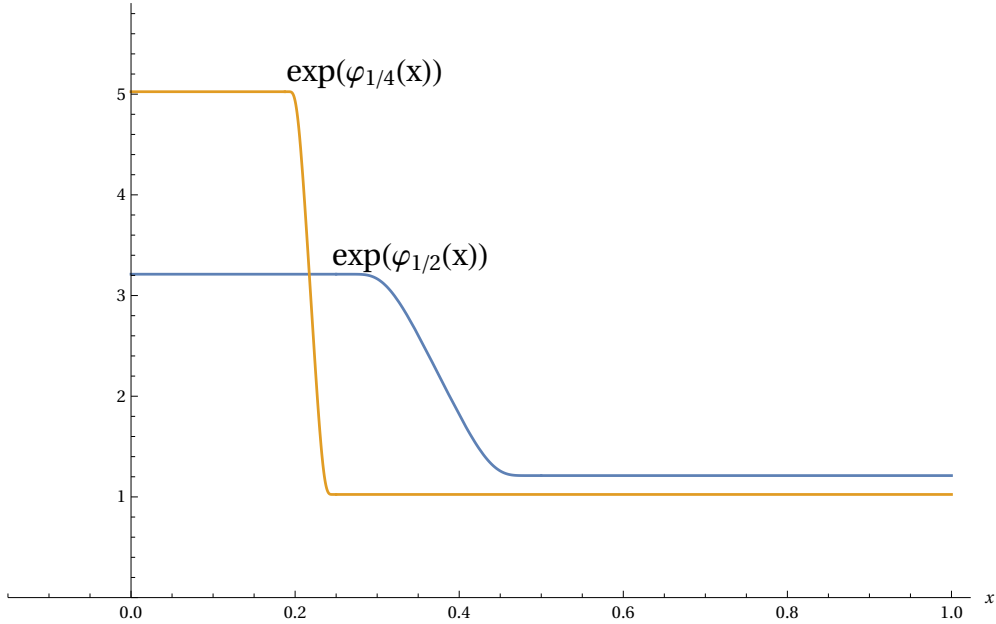}
\caption{The graph of $\exp(\varphi_a(x))$, $0\le x \le 1$, for $a=1/4$ and $a=1/2$.
}
\label{fig:graph}
\end{figure}

\subsection{Motivation: the dynamic of entropic repulsion with critical pinning}\label{subsec:Motivation}
 {
Take a family $\varphi_a\in C^2$ supported in $[0,a]$, $0<a<1$, which satisfies Condition (A) from Definition \ref{def:cond A}, see for example Chapter \ref{subsec:example: Smooth approximation}.  
Fix some $a>0$. We can easily construct a dynamic ${X}_t(x)$, $t\ge 0$, $x\in I_N:=\{0,1,...,N\}$, for which the measure  
$\pr^c_{\varphi_{a},N}$ defined in \eqref{eq:ModelDefConstraint} is a reversible equilibrium:
\[
  {X}_x(t) = -\int_{0}^{t} \partial_x H_{N}(X(s))\dif s +  \ell_t(x) + \sqrt{2} W_t (x) , \,\, x\in I_N, t\ge 0,
\]
with boundary conditions 
\[
 X_0(t)=X_N(t)=0, \,\, t\ge 0,
\]
initial law 
\[ 
(X_x(0))_{x\in I_N} \sim \pr^c_{\varphi_a,N}, 
\]
so that the local time process $\ell_t$ satisfies
\[
\dif \ell_x(t) \ge 0,\,\,  t\ge0, x\in I_N,  
\]
and
\[
 \int_0^\infty {X}_x(t)\dif \ell_x(t)=0,\,\,  x\in I_N, t\ge 0,
\]
$W(x)$, $x\in I_N$, are independent standard Wiener measures,
\[
\partial_x H_N(X):= \frac{\partial}{\partial X_x} H_N(X), 
\]
and the Hamiltonian
\[
H_N(X):=\sum_{x = 0}^N \varphi_{a}(x) + \frac{1}{2} \sum _{x=1}^{N} (X_x-X_{x-1})^2 + \frac{1}{2} X_0^2 + \frac{1}{2}X_N(N)^2.
\]
Let $X^{N}(t)$ be the diffusively rescaled and linearly interpolated path given by  
\[
X^{N}_y(t)=\frac{1}{N^{1/2}} X_{\lfloor Ny \rfloor }(t)+  \frac{1}{N^{1/2}} (Ny -\lfloor Ny \rfloor) (X_{\lfloor Ny \rfloor+1 }(t) - X_{\lfloor Ny \rfloor}(t)) ,\, t\ge 0,\, y\in [0,1]. 
\]
Our Theorem \ref{thm:main thm general path limit} states that if  $a=a_N=o(N^{-1/2})$, then
\[
(X_y^N(0))_{y\in[0,1]} \Rightarrow (\beta_y)_{y\in[0,1]}\,\,\,\,\ (*) 
\]
where $(\beta_y)_{y\in[0,1]}$ is the reflected Brownian bridge.

We expect that $\{X^N_y(tN^2),y\in[0,1], t\ge 0 \}$ is tight in $N\in\N$ and converges to a SPDE 
which is the natural reversible dynamic associated with $(\beta_y)_{y\in[0,1]}$.
The construction of the dynamic in finite volume for \emph{singular} drift was addressed in Funaki's lecture notes \cite[Chapter 15.2]{funaki2005stochastic} and 
in \cite{fattler2016construction} using Dirichlet form techniques. Due to our approach the construction becomes  
easy since it allows a \emph{smooth} drift so that $(*)$ still holds.

}

\subsection{Applications to strip wetting with constant pinning at criticality}\label{sec:application for critical strip wetting}

Sohier \cite{sohier2015scaling} considered the strip wetting model with constant pinning and proved that there is some $\beta_c(a)\in\reals$ 
so that off-criticality, the same path scaling limit results as in the standard wetting model hold true. 
Namely, in this case the limiting object is
\begin{itemize}
 \item Brownian meander (free case) or the Brownian excursion (constrained case), whenever $\beta<\beta_c(a)$, and 
 \item a mass-one measure on the constant zero function, whenever $\beta>\beta_c(a)$. 
 \end{itemize}
In particular, he proved also a corresponding statement on the off-critical contact set scaling limits.
Moreover, $\beta_c(a)$ is represented in terms of an eigenvalue of a natural Hilbert-Schmidt 
integral operator, see \cite{sohier2015scaling}, and Section \ref{sec:Detour - the associated integral operator and the critical value derivation}.  
 
The next theorem deals with critical value $\beta_c(a)$ of the constant pinning model for $a$ small. 
It states that the critical value $\beta_c$ of the standard wetting model
is well-approximated by $\beta_c(a)$.   
\begin{thm}\label{thm:critical wetting close to standard critical wetting}
There is are constants $C,D>0$ so that 
$${Da^2}\le \log a + \beta_c(a) -\beta_c\le C a $$ 
for all $a>0$ small enough.
In particular, the constant function $\varphi_a=\beta_c(a)$ satisfies Condition (A) from Definition 
\ref{def:cond A}, and moreover $ae^{\beta_c(a)}\to e^{\beta_c}$ as $a\to 0$.
\end{thm}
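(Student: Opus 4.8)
The plan is to combine the spectral description of $\beta_c(a)$ recalled in Section~\ref{sec:Detour - the associated integral operator and the critical value derivation} (following Sohier~\cite{sohier2015scaling}) with a precise analysis, as $a\to0$, of the Perron eigenvalue appearing there. Expanding $e^{\beta\#\{n\le N:S_n\in[0,a]\}}=\prod_{n=1}^N\big(1+(e^\beta-1)\one_{[0,a]}(S_n)\big)$ in the partition function $\ZQ^c_{\beta,N}$ and summing the resulting quasi-renewal series, one finds that $\beta_c(a)$ is characterised by
\[
\big(e^{\beta_c(a)}-1\big)\,\delta(a)=1,\qquad \delta(a):=\lambda_{\max}(\mathcal G_a),
\]
where $\mathcal G_a$ is the nonnegative, self-adjoint (by time reversal and symmetry of $\rho$) Hilbert--Schmidt operator on $L^2([0,a])$ with kernel $g_a(x,y)=\sum_{n\ge1}\pr_x[\mathcal C_n,\,S_n=y]$, $x,y\in[0,a]$. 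Since $g_a(0,0)=\sum_{n\ge1}f_n=e^{-\beta_c}$ by \eqref{eq:def of critical beta}, an elementary rearrangement of the displayed relation expresses $\log a+\beta_c(a)-\beta_c$ through $\theta(a):=\delta(a)/(ae^{-\beta_c})$, and the theorem reduces to estimating $\delta(a)$: a crude bound $|\delta(a)-ae^{-\beta_c}|\le Ca^2$ already yields $ae^{\beta_c(a)}\to e^{\beta_c}$ and the upper estimate $\log a+\beta_c(a)-\beta_c\le Ca$, while the lower estimate $\log a+\beta_c(a)-\beta_c\ge Da^2$ is equivalent to the sharper one-sided bound $\delta(a)\le ae^{-\beta_c}+a^2e^{-2\beta_c}$ with a gap of order $a^3$, i.e.\ to pinning down the second-order term of $\delta(a)$.

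The first step is the kernel estimate: uniformly for $x,y\in[0,a]$,
\[
g_a(x,y)=e^{-\beta_c}+a\,\psi\big(\tfrac{x}{a},\tfrac{y}{a}\big)+o(a)\qquad(a\to0)
\]
for an explicit bounded (nonnegative) profile $\psi$ on $[0,1]^2$; in particular $g_a=e^{-\beta_c}+O(a)$. I would obtain it by splitting $\sum_{n\ge1}$ at a cutoff $n_0$: for $n\le n_0$ each term equals $f_n+O(a)$ by Lipschitz continuity near the origin of the (bounded, $C^2$) density of $S_n$ on $\mathcal C_n$, while for $n>n_0$ one uses the local limit theorem for the walk conditioned to stay nonnegative, $\pr_x[\mathcal C_n,S_n=y]=c\,n^{-3/2}V(x)V(y)(1+o(1))$ uniformly for $x,y$ in a bounded interval, where $V$ is the renewal function of the descending ladder heights, with $V(0)=1$ and $V(x)=1+O(x)$ as $x\downarrow0$ --- the $O(x)$ being precisely the source of $\psi$, and the summability of $n^{-3/2}$ controlling the errors.

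The second step converts this into eigenvalue bounds. A lower bound on $\delta(a)$ comes from the Rayleigh quotient with test function $\one_{[0,a]}$, $\delta(a)\ge a^{-1}\int_0^a\!\int_0^a g_a$; an upper bound comes, $g_a$ being nonnegative, from the Collatz--Wielandt principle $\delta(a)\le\sup_{x\in[0,a]}h(x)^{-1}\int_0^a g_a(x,y)h(y)\,dy$, taking $h\equiv1$ for the crude bound and an approximate Perron eigenfunction (built from $\psi$) when the $a^2$-accurate bound is required. Plugging in the kernel estimate squeezes $\delta(a)=ae^{-\beta_c}+O(a^2)$ with matching $a^2$-coefficients; feeding this into the characterising relation yields the claimed estimates, and that $\varphi_a\equiv\beta_c(a)$ satisfies Condition~(A) of Definition~\ref{def:cond A} is exactly $|\log a+\beta_c(a)-\beta_c|\le Ca$.

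The main obstacle is the lower bound $Da^2\le\log a+\beta_c(a)-\beta_c$. Unlike the upper bound --- which needs only the first-order one-sided estimate $g_a\ge e^{-\beta_c}(1-Ca)$ --- it is genuinely second order: by the reduction above it amounts to showing that the $a^2$-coefficient of $\delta(a)$ is strictly smaller than $e^{-2\beta_c}$, equivalently that the constant-pinning strip model at the naive value $\beta=\beta_c-\log a$ is strictly subcritical with a gap of order $a^2$. This forces one to control the kernel not just to order $O(a)$ but with the correct profile $\psi$ and a quantitative $a^2$-remainder, and to use an eigenvalue bound (a sufficiently good test weight $h$, or a two-sided variational argument) that transmits this precision to $\delta(a)$. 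So the real work lies in the uniform local-limit-theorem estimates for the conditioned walk and the fine behaviour of $V$ near $0$, carried out with explicit constants.
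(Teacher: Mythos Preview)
Your spectral characterisation $(e^{\beta_c(a)}-1)\,\delta(a)=1$ via the Green-function operator $\mathcal G_a$ is correct and equivalent to the paper's $e^{-\beta_c(a)}=\delta_a(0)$ (Section~\ref{sec:Detour - the associated integral operator and the critical value derivation}): decomposing by the first return to $[0,a]$ gives $\mathcal G_a=B_0^a(I-B_0^a)^{-1}$, where $B_0^a$ is the integral operator with kernel $b_0^a$, so your $\delta(a)$ equals $\delta_a(0)/(1-\delta_a(0))$. The upper bound $\le Ca$ then goes through in either picture --- in yours from $g(x,y)=e^{-\beta_c}+O(a)$ and the Rayleigh quotient, in the paper's from the eigenvalue equation at $x=0$ together with $V_a$ increasing and the comparison $f_n^a\ge f_n e^{-C_0a}$ of Lemma~\ref{lem:estimating fn(a)}.

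The approaches diverge sharply at the lower bound $\ge Da^2$. The paper does \emph{not} do eigenvalue perturbation from kernel asymptotics: it evaluates the Perron equation for $B_0^a$ at the single point $x=a$, where two structural features of the \emph{first-return} kernel are decisive --- the exact identity $f_n^a(a,a)=f_n$ (hence $f_n^a(a,y)\le f_n$ by monotonicity), and the pointwise differential inequality $\partial_x f_n^a(x,y)\ge c(a-x)\,f_n^a(x,y)$ coming directly from $V''\ge 1/c$. The latter transfers to the eigenfunction as $\partial_x V_a\ge c(a-x)V_a$, hence $V_a(y)/V_a(a)\le e^{-c(a-y)^2/2}$; plugging both into $e^{-\beta_c(a)}=\int_0^a\sum_n f_n^a(a,y)\,\frac{V_a(y)}{V_a(a)}\,\dif y\le e^{-\beta_c}\int_0^a e^{-cy^2/2}\,\dif y\le a\,e^{-Da^2-\beta_c}$ finishes in a few lines.

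In your framework the same information is buried one order deeper, and this is where your proposal has a real gap. The relation $\delta(a)=\delta_a(0)/(1-\delta_a(0))$ with the first-order fact $\delta_a(0)=ae^{-\beta_c}+o(a)$ already forces $\delta(a)=ae^{-\beta_c}+a^2e^{-2\beta_c}+o(a^2)$: the $a^2$-coefficient you propose to bound is \emph{automatically} $e^{-2\beta_c}$, so a strict inequality there would actually yield the stronger linear lower bound (which the theorem does not assert), while the stated $Da^2$ bound sits exactly at the borderline and requires you to control $\delta(a)$ to order $a^3$ --- equivalently, your kernel $g$ to second order with explicit constants, plus a second-order eigenvalue perturbation. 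You flag this as the obstacle but do not carry it out, and your Green-function kernel has no analogue of the boundary identity $f_n^a(a,a)=f_n$ or of the convexity-driven differential inequality that make the paper's argument short. The route is not wrong, but it trades a two-line convexity trick on the eigen\emph{function} for a substantially harder asymptotic computation whose outcome you have not verified.
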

In particular, we have an analogous contact set and full path scaling limits in the critical case on shrinking strips:  
\begin{cor}\label{cor:crit wetting enjoys general thms}
Theorems \ref{thm:main thm general zero set limit} and \ref{thm:main thm general path limit}
hold true also for the critical constant pinning models, i.e.\ whenever $\varphi_a(x)=\beta_c(a), x\in[0,a]$.  
\end{cor}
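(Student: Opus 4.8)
The plan is to obtain Corollary \ref{cor:crit wetting enjoys general thms} as a direct consequence of Theorem \ref{thm:critical wetting close to standard critical wetting} together with Theorems \ref{thm:main thm general zero set limit} and \ref{thm:main thm general path limit}. No new probabilistic estimate is required: the entire content is to check that the constant pinning family $\varphi_a \equiv \beta_c(a)$ is an admissible input to the two general theorems, and then to quote them.

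First I would record that $\{\varphi_a \equiv \beta_c(a)\}_{0<a\le a_0}$ is a legitimate one-parameter family of pinning functions in the sense of Section \ref{subsec:the model}: for each $a$ in the (small) range where $\beta_c(a)$ is defined, $\int_0^a e^{\varphi_a(x)}\dif x = a\,e^{\beta_c(a)} < \infty$, and by the last assertion of Theorem \ref{thm:critical wetting close to standard critical wetting} this quantity even converges to $e^{\beta_c}$ as $a\to 0$.

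Next I would verify Condition (A) from Definition \ref{def:cond A}. Since $\varphi_a$ is constant on $[0,a]$, the quantity $\tfrac1a\log\int_0^a e^{\varphi_a(x)-\beta_c}\dif x$ does not depend on $x$, so uniformity over $x\in[0,a]$ is automatic, and
\[
\frac1a\log\int_0^a e^{\varphi_a(x)-\beta_c}\dif x \;=\; \frac1a\log\!\left(a\,e^{\beta_c(a)-\beta_c}\right) \;=\; \frac1a\bigl(\log a + \beta_c(a)-\beta_c\bigr).
\]
The two-sided bound $Da^2 \le \log a + \beta_c(a) - \beta_c \le Ca$ from Theorem \ref{thm:critical wetting close to standard critical wetting} then shows that the right-hand side lies in $[\,Da,\,C\,] \subset [-C,\,C]$ for all $a$ small enough, which is Condition (A) (and is exactly the assertion already made inside Theorem \ref{thm:critical wetting close to standard critical wetting}).

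Finally, fixing any sequence $a_N = o(N^{-1/2})$, I would simply apply Theorems \ref{thm:main thm general zero set limit} and \ref{thm:main thm general path limit} to $\varphi_{a_N} \equiv \beta_c(a_N)$: under $\Qset^\alpha_{\varphi_{a_N},N}$ the rescaled contact set $\mathcal{A}_N$ converges in the Matheron topology to the Brownian motion zero-set for $\alpha=f$ and to the Brownian bridge zero-set for $\alpha=c$, and under $\pr^\alpha_{\varphi_{a_N},N}$ the interpolated path $X^{(N)}$ converges in $C[0,1]$ to reflected Brownian motion for $\alpha=f$ and to reflected Brownian bridge for $\alpha=c$. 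There is no real obstacle here; the one thing worth a moment's care is to confirm that the hypotheses of the two general theorems consist only of ``$a_N=o(N^{-1/2})$'' and ``Condition (A)'', with no hidden regularity requirement (e.g.\ smoothness of $\varphi_a$, which the constant function trivially satisfies anyway) that one would otherwise need to re-examine.
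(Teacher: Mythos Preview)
Your proposal is correct and follows exactly the paper's intended route: the paper does not even write out a separate proof for this corollary, since Theorem \ref{thm:critical wetting close to standard critical wetting} already states explicitly that $\varphi_a\equiv\beta_c(a)$ satisfies Condition (A), after which Theorems \ref{thm:main thm general zero set limit} and \ref{thm:main thm general path limit} apply directly. Your computation $\tfrac{1}{a}\log\int_0^a e^{\beta_c(a)-\beta_c}\dif x = \tfrac{1}{a}(\log a + \beta_c(a)-\beta_c)\in[Da,C]$ is precisely the verification implicit in the ``In particular'' clause of that theorem.
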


 {
\begin{rem}
In \cite{sohier2013scaling} the critical contact set with free boundary conditions was considered, 
for fixed size $a$ of the strip.
That paper states that the rescaled contact set converges to a random set which distribution is absolutely 
continuous but not equal to the Brownian motion zero-set.
In our case when $a=a_N=o(N^{-1/2})$ the limit is the Brownian motion zero-set.
Also we prove the full path convergence to reflected Brownian motion.
Although [Soh13] does not contradict our results, since it deals with fixed $a$,
we believe that there is a gap in the proof of Theorem 1.5, and in particular in Lemma 3.3.
Also, the case $a=a_N=N^{-\gamma}$
with $\gamma<1/2$ remains open, see 
Section \ref{sec:remarks} for the case $\gamma=1/2$.
\end{rem}

}

\section{Comparing excursion kernels}\label{sec:comparing kernels}

Define the excursion kernel density
\begin{equation}\label{eq:def of f n a x y}
f^a_n(x,y)\dif y := \pr _x [S_1>a,...,S_{n-1}>a,S_n \in \dif y] 
\end{equation}
for $n\ge 2$, where $f_1^a(x,y)\dif y =  \pr _x [S_1 \in \dif y]$.
Let 
$$f^a_n:=f^a_n(0,0),$$ and we omit the up-case $a$ whenever $a=0$, that is 
$$f_n=f^0_n.$$

The first observation is that the $f^a_n$ approximate the corresponding $f_n$.
\begin{lem}\label{lem:comparing excursions kernel to pinning}
The following hold:
\begin{itemize}
 \item $f^a_n$ is symmetric: $f^a_n(x,y)=f^a_n(y,x)$ for all $x,y\in[0,a]$, $n\ge 1$. 
 \item $f^a_n(x,y)$ is monotonously increasing in $x,y\in[0,a]$.
 \item $f^a_n(a,a)=f_n$.
  \end{itemize}
In particular,
\begin{equation}\label{eq: upper bound fan fn} 
\frac{f^a_n}{f_n} \le \frac{f^a_n(x,y)}{f_n} \le 1 
\end{equation}
for all $x,y\in[0,a]$ and $n\ge 1$.
Moreover, $\frac{f^a_n}{f_n}$ decreases in $a$ and tends to 1 as $a\to 0$, for all $n$.
\end{lem}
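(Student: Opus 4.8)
The plan is to reduce every assertion to the explicit Lebesgue integral representation of the excursion kernel, using only three elementary facts about the step density $\rho=\kappa^{-1}e^{-V}$: it is even (since $V$ is even), it is (strictly) decreasing on $(0,\infty)$ (since $V$ is even, $C^2$ and strictly convex, so $V'(0)=0$ and $V$ is strictly increasing on $[0,\infty)$), and it is everywhere positive. Writing $S_k=x+X_1+\dots+X_k$ with $X_i$ i.i.d.\ of density $\rho$, we have for $n\ge2$
\begin{equation*}
f^a_n(x,y)=\int_{(a,\infty)^{n-1}}\rho(z_1-x)\,\rho(z_2-z_1)\cdots\rho(z_{n-1}-z_{n-2})\,\rho(y-z_{n-1})\,\dif z_1\cdots\dif z_{n-1},
\end{equation*}
while $f^a_1(x,y)=\rho(y-x)$, which is symmetric and, since $\rho$ is maximized at the origin, satisfies $f^a_1(x,y)\le\rho(0)=f_1=f^a_1(a,a)$; I therefore concentrate on $n\ge2$.

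First I would prove symmetry by substituting $z_i\mapsto z_{n-i}$ in the integral above: this leaves the domain $(a,\infty)^{n-1}$ unchanged, and applying $\rho(-u)=\rho(u)$ to each of the $n$ factors turns the integrand into the one defining $f^a_n(y,x)$. For monotonicity, note that $x$ enters the integrand only through the factor $\rho(z_1-x)$, and every $z_1$ in the domain satisfies $z_1>a\ge x\ge0$, so $z_1-x$ lies in $(0,\infty)$, where $\rho$ is decreasing; hence raising $x\in[0,a]$ raises the integrand pointwise, so $x\mapsto f^a_n(x,y)$ is nondecreasing, and by the symmetry just established so is $y\mapsto f^a_n(x,y)$, which gives joint monotonicity. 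For the identity $f^a_n(a,a)=f_n$, translation invariance of the walk gives $f^a_n(a,a)=\pr_a[S_1>a,\dots,S_{n-1}>a,\,S_n=a]=\pr_0[S_1>0,\dots,S_{n-1}>0,\,S_n=0]$, and since the increments have a density the events $\{S_i>0\}$ and $\{S_i\ge0\}$ agree up to $\pr_0$-null sets, so this equals $\pr_0[\mathcal{C}_n,\,S_n=0]=f_n$.

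The two-sided bound \eqref{eq: upper bound fan fn} then follows at once: monotonicity gives $f^a_n(0,0)\le f^a_n(x,y)\le f^a_n(a,a)$ for $x,y\in[0,a]$, and $f^a_n(a,a)=f_n$ is strictly positive because $\rho>0$, so dividing through by $f_n$ yields the claim. For the final sentence, as $a$ grows the domain $(a,\infty)^{n-1}$ in the representation of $f^a_n=f^a_n(0,0)$ shrinks while the integrand stays nonnegative, so $a\mapsto f^a_n$, and hence $a\mapsto f^a_n/f_n$, is nonincreasing; and as $a\downarrow0$ these domains increase to $(0,\infty)^{n-1}$, which differs from $[0,\infty)^{n-1}$ by a Lebesgue-null set, so monotone convergence gives $f^a_n(0,0)\to f^0_n(0,0)=f_n$, i.e.\ $f^a_n/f_n\to1$.

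I do not expect a genuine obstacle here: all the content sits in the integral representation. The single point deserving care is the interplay with the ``density on events'' convention \eqref{eq:density on events}: the shrinking-domain and monotone-convergence arguments, as well as the translation-invariance identity, should be run on the honest Lebesgue integrals displayed above (equivalently, on the pre-limit quantities $\tfrac1{2\epsilon}\pr_x[\,\cdot\,,\,S_n\in[y-\epsilon,y+\epsilon]]$ before sending $\epsilon\to0$), rather than manipulating ``probabilities'' of measure-zero events directly.
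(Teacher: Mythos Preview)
Your argument is correct and is exactly the paper's approach: both work from the explicit integral representation of $f^a_n(x,y)$ and read off symmetry, monotonicity from the fact that $\rho$ is decreasing on $(0,\infty)$, and the identity $f^a_n(a,a)=f_n$ from translation invariance (the paper phrases this as the change of variables $s_i\mapsto s_i+a$), with the ``moreover'' clause coming from the shrinking domain $(a,\infty)^{n-1}$ and monotone convergence. Your separate handling of $n=1$ is in fact a useful caveat, since for $n=1$ one has $f^a_1(x,y)=\rho(y-x)$, which is not monotone in $x$ on all of $[0,a]$ and gives $f^a_1/f_1=1>f^a_1(x,y)/f_1$ when $x\neq y$; only the upper bound in the displayed inequality survives there, which is all that is actually used downstream.
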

\begin{proof}
For the first two properties, one uses the corresponding assumptions on $\rho$ on the following explicit expression for the 
densities  
\[
f^a_{n+1}(x,y)=\int_a^{\infty}...\int_a^{\infty}\rho(s_1-x)\rho(s_2-s_1)\cdots\rho(s_{n}-s_{n-1})\rho(y-s_{n})\dif s_1\cdots\dif s_{n}. 
\]
The last property follows, e.g., by the change of variables $s_i\to s_i + a, i=1,...,n$.   
\end{proof}

Let 
\begin{equation}\label{eq:def of P(n)}
P^a_x(n):=\pr_x[S_1> a,...,S_n> a], \text{ and  } P(n):=P^0_0(n).  
\end{equation}
Note that $P^a_x(n)$ is (continuously) increasing in $x\in[0,a]$.
In particular, $P^a_0(n)\le P^a_x(n) \le P^a_a(n)=P(n)$ for $x\in[0,a]$.  
For the right part a classical result is
\[
P^0_0(n)\sim \frac{1}{\sqrt{2\pi}} n^{-1/2}.
\]
The following is a weak version of Sohier \cite[Lemma 2.2.]{sohier2015scaling}. 
\begin{lem}
 There is a monotonously decreasing function $C^a(x):[0,a]\to\reals+$ so that $C^a(a)=1$, $C^a(0)>0$ and 
\[
P^a_x(n)\sim \frac{C^a(x)}{\sqrt{2\pi}} n^{-1/2}.
\]
\end{lem}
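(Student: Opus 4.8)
The plan is to reduce the statement to the classical asymptotics for a random walk started at a \emph{fixed} point and conditioned to stay positive: conditioning on the first step and translating the strip level back to the origin expresses $P^a_x(n)$ as an integral of those quantities against $\rho$, and one then passes to the limit under the integral sign using the super‑Gaussian decay of $\rho$.

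The single non‑elementary input I would import is classical fluctuation theory. Writing $Q_u(m):=\pr_u[S_1>0,\dots,S_m>0]$ for $u\ge 0$, $m\ge 1$ (so $Q_0(m)=P(m)$), the theory of mean‑zero, unit‑variance walks conditioned to stay positive — see e.g.\ \cite[Lemma 2.2]{sohier2015scaling} and the references therein — provides a non‑decreasing function $\psi\colon[0,\infty)\to(0,\infty)$ with $\psi(0)=\tfrac{1}{\sqrt{2\pi}}$ and at most linear growth, $\psi(u)\le\kappa_1(1+u)$, such that $Q_u(m)\sim\psi(u)\,m^{-1/2}$ as $m\to\infty$ for each fixed $u\ge 0$, together with the uniform bound $Q_u(m)\le\kappa_0(1+u)\,m^{-1/2}$ for all $u\ge 0$ and $m\ge 1$. (For $u=0$ this is the classical statement $P(m)\sim\tfrac{1}{\sqrt{2\pi}}m^{-1/2}$ already quoted; $\psi$ is a constant multiple of the renewal function of the descending ladder heights, whose linear growth uses $\E[S_1^2]<\infty$.)

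Then I would carry out the reduction. A one‑step decomposition followed by the translation $S_\bullet\mapsto S_\bullet-a$ (a walk starting at $u+a$ stays above $a$ precisely when the walk $S_\bullet-a$, which starts at $u$, stays positive) gives, for $x\in[0,a]$ and $n\ge 2$,
\[
P^a_x(n)=\int_a^\infty\rho(s-x)\,P^a_s(n-1)\,\dif s=\int_0^\infty\rho(u+a-x)\,Q_u(n-1)\,\dif u.
\]
Multiplying by $\sqrt n$: for each $u>0$ one has $\sqrt n\,Q_u(n-1)\to\psi(u)$, while $\sqrt n\,Q_u(n-1)\le\kappa_0(1+u)\sqrt{n/(n-1)}\le 2\kappa_0(1+u)$ for $n\ge 2$; and since $V$ is even, $C^2$, with $V''\ge 1/c$, one has $V(z)\ge V(0)+\tfrac{1}{2c}z^2$, hence $\rho(u+a-x)\le\rho(u)\le C_\rho\,e^{-u^2/(2c)}$ for $u\ge 0$ and $x\in[0,a]$, so that $u\mapsto(1+u)\rho(u+a-x)$ is integrable on $[0,\infty)$. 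Dominated convergence then yields
\[
\sqrt n\,P^a_x(n)\longrightarrow\int_0^\infty\rho(u+a-x)\,\psi(u)\,\dif u\in(0,\infty)\qquad(n\to\infty),
\]
the integral being finite by the same tail bound together with $\psi(u)\le\kappa_1(1+u)$. I would then \emph{define} $C^a(x):=\sqrt{2\pi}\int_0^\infty\rho(u+a-x)\,\psi(u)\,\dif u$, so that $P^a_x(n)\sim\tfrac{C^a(x)}{\sqrt{2\pi}}\,n^{-1/2}$.

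Finally I would read off the three properties. Taking $x=a$ gives $P^a_a(n)=P(n)$ and $\sqrt n\,P(n)\to\tfrac{1}{\sqrt{2\pi}}$, so the displayed limit forces $C^a(a)=1$. Since $\psi\ge\psi(0)=\tfrac{1}{\sqrt{2\pi}}$, one gets $C^a(0)\ge\int_0^\infty\rho(u+a)\,\dif u=\pr_0[S_1>a]>0$. For the monotonicity, the coupling remark after \eqref{eq:def of P(n)} shows $x\mapsto P^a_x(n)$ is monotone on $[0,a]$ with $P^a_x(n)\le P^a_a(n)=P(n)$; passing to the limit gives the corresponding monotonicity of $C^a$, so that $0<C^a(0)\le C^a(x)\le C^a(a)=1$ on $[0,a]$ — the same conclusion also being immediate from the integral formula, since $a-x$ decreases in $x$ on $[0,a]$ while $\rho$ is even and decreasing on $[0,\infty)$. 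I expect the main obstacle to be exactly the imported classical input — the asymptotics $Q_u(m)\sim\psi(u)m^{-1/2}$ with the matching uniform bound $Q_u(m)\le\kappa_0(1+u)m^{-1/2}$; everything downstream is bookkeeping, the one genuine analytic point being the super‑Gaussian tail of $\rho$, which simultaneously makes the limiting integrals converge and supplies the dominating function.
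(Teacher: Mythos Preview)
Your argument is correct and rests on the same imported input as the paper's --- Sohier's Lemma~2.2 / classical fluctuation-theory asymptotics --- but the paper applies that input more directly. Translating the strip to the origin gives $P^a_x(n)=\pr_{x-a}[S_1>0,\dots,S_n>0]$ with $x-a\in[-a,0]$, and Sohier's lemma already covers non-positive starting points, yielding at once the explicit formula $C^a(x)=\mathbf{P}[H_1\ge a-x]$ in terms of the first ascending ladder height $H_1$; the three properties then follow in one line from $H_1\ge 0$. Your first-step decomposition, which forces the starting point back into $[0,\infty)$ before invoking the classical asymptotics, is therefore unnecessary, though it is harmless and produces an equivalent integral representation $C^a(x)=\sqrt{2\pi}\int_0^\infty\rho(u+a-x)\psi(u)\,\dif u$ of the same constant. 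One side remark: the word ``decreasing'' in the lemma statement is a slip --- as your computation, the paper's ladder-height formula, and the sandwich $P^a_0(n)\le P^a_x(n)\le P(n)$ used immediately after the lemma all show, $C^a$ is monotone \emph{increasing} on $[0,a]$.
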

\begin{proof}
If we set $C^a(x):= \mathbf{P}[H_1\ge a-x]$, the asymptotic equivalence in the line above is the content of \cite[Lemma 2.2.]{sohier2015scaling}, where 
$H_1$ is the so called first ascending ladder point. 
The proof is done by noticing that $H_1$ is defined to be a non-negative random variable.
\end{proof}
 Putting the last statements together, we get that there is a monotonously decreasing function $C^a(x):[0,a]\to\reals+$ so that $C^a(a)=1$ $C^a(0)>0$ and 
 \begin{equation}\label{eq:comparing Pan in terms of Pn}
C^a(0) \sim \sqrt{2\pi}n^{1/2}P^a_0(n)\le \sqrt{2\pi}n^{1/2}P^a_x(n) \le \sqrt{2\pi}n^{1/2}P(n)  \sim 1
 \end{equation}
 for $x\in[0,a]$.  

 As a corollary we have
 \begin{cor}
 Assume that $a=a_n\to 0$. Then uniformly in $x_n\in [0,a_n]$  
  \[
 \sqrt{2\pi}n^{1/2}P^{a_n}_{x_n}(n) \to 1 \text{ as } n\to \infty,
 \]
 or equivalently $P^{a_n}_{x_n}(\cdot)\sim P(\cdot)$. 
 \end{cor}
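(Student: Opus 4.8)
The plan is to squeeze $\sqrt{2\pi}\,n^{1/2}P^{a_n}_{x_n}(n)$ between two quantities that each tend to $1$, exploiting only the monotonicity relations already recorded in the excerpt; this is what lets us avoid having to control the error in the equivalence $P^a_0(n)\sim C^a(0)n^{-1/2}/\sqrt{2\pi}$ uniformly in $a$. Note first that the bounds $P^{a_n}_0(n)\le P^{a_n}_{x_n}(n)\le P^{a_n}_{a_n}(n)=P(n)$ from \eqref{eq:def of P(n)} hold uniformly over $x_n\in[0,a_n]$, so it suffices to show that both $\sqrt{2\pi}\,n^{1/2}P(n)$ and $\sqrt{2\pi}\,n^{1/2}P^{a_n}_0(n)$ converge to $1$; the uniformity in $x_n$ then comes for free.

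For the upper end this is immediate: $\sqrt{2\pi}\,n^{1/2}P(n)=\sqrt{2\pi}\,n^{1/2}P^0_0(n)\to 1$ is the classical equivalence quoted right after \eqref{eq:def of P(n)}, so in particular $\limsup_n \sqrt{2\pi}\,n^{1/2}P^{a_n}_{x_n}(n)\le 1$. For the lower end I would use that $a\mapsto P^a_0(n)$ is nonincreasing, since enlarging $a$ only shrinks the event $\{S_1>a,\dots,S_n>a\}$. Fix an arbitrary $b>0$. Because $a_n\to 0$, for all $n$ large enough we have $a_n\le b$ and hence $P^{a_n}_0(n)\ge P^{b}_0(n)$; applying the fixed-$b$ asymptotics of the preceding lemma, $P^b_0(n)\sim C^b(0)n^{-1/2}/\sqrt{2\pi}$, we obtain $\liminf_n \sqrt{2\pi}\,n^{1/2}P^{a_n}_0(n)\ge C^b(0)$. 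It then remains to send $b\downarrow 0$: recalling from the proof of that lemma that $C^b(0)=\mathbf P[H_1\ge b]$ with $H_1$ the first ascending ladder point of the walk, and that $H_1>0$ almost surely (the increments have a continuous density, so the walk puts no mass at $0$), monotone convergence gives $C^b(0)=\mathbf P[H_1\ge b]\uparrow \mathbf P[H_1>0]=1$ as $b\downarrow 0$. Hence $\liminf_n \sqrt{2\pi}\,n^{1/2}P^{a_n}_{x_n}(n)\ge 1$, and combining with the upper bound proves the claim, uniformly in $x_n\in[0,a_n]$.

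The only delicate point — more a point to be careful about than a real obstacle — is the interchange of the two regimes $n\to\infty$ and $a_n\to 0$: for each fixed $a$ the equivalence $P^a_0(n)\sim C^a(0)n^{-1/2}/\sqrt{2\pi}$ holds, but a priori the onset of this asymptotic regime could deteriorate as $a\to 0$. Monotonicity of $P^a_0(n)$ in $a$ is precisely what circumvents the need for any uniform-in-$a$ error estimate: we invoke the lemma only at fixed $b$ and let $b\to 0$ afterwards, outside the limit in $n$.
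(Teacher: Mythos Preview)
Your proof is correct and follows the same sandwich strategy as the paper, bounding $P^{a_n}_{x_n}(n)$ between $P^{a_n}_0(n)$ and $P(n)$ and using that $C^a(0)=\mathbf P[H_1\ge a]\to 1$ as $a\to 0$. Your handling of the lower bound via a fixed auxiliary level $b$ and the monotonicity of $a\mapsto P^a_0(n)$ is in fact more careful than the paper's one-line chain $1=\liminf C^{a_n}(0)\le \liminf \sqrt{2\pi}\,n^{1/2}P^{a_n}_{x_n}(n)$, which tacitly invokes the fixed-$a$ asymptotic at the moving point $a_n$.
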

 \begin{proof}
Indeed,
  \[
 1=\liminf_{n\to\infty} C^{a_n}(0)\le \liminf_{n\to\infty} \sqrt{2\pi}n^{1/2}P^{a_n}_{x_n}(n) \le \limsup_{n\to\infty}\sqrt{2\pi}n^{1/2}P(n)= 1
 \]
 \end{proof}

\subsubsection*{Approximating $f_n^a$ in terms of $f_n$}\label{sec:estimating f}
 The main goal of this section is to estimate $f_n^a$ in terms of $f_n$ and $a$. The next lemma actually supplies
 upper and lower bounds, but for the results of the paper we shall only use the lower bound.

\begin{lem}\label{lem:estimating fn(a)}
There are constants $0<c_0,\tilde{c}_0,c_1,\tilde{c}_1$ so that for all $0\le a \le 1$ and $n\ge 1$
\[
 \exp (- c_0 a -\tilde{c}_0 a^2 ) \le f^a_n/f_n \le \exp(-c_1 a +\tilde{c}_1 a^2 ).
\]
In particular, there is some $0<a_0$ and constants $C_0,C_1$ so that for all $0\le a\le a_0$ and $n\ge 1$
\begin{equation}\label{eq: lower bound comparison fan fn} 
 \exp(-C_0 a)  \le f^a_n/f_n \le \exp(-C_1 a).
\end{equation}
\end{lem}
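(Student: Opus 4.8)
The plan is to compare the excursion kernel $f_n^a = f_n^a(0,0)$ with $f_n = f_n^a(a,a)$ directly through the explicit integral representation
\[
f^a_{n}(x,y)=\int_a^{\infty}\!\!\cdots\!\int_a^{\infty}\rho(s_1-x)\rho(s_2-s_1)\cdots\rho(s_{n-1}-s_{n-2})\rho(y-s_{n-1})\dif s_1\cdots\dif s_{n-1},
\]
and the shift of variables $s_i \mapsto s_i + a$ used already in Lemma \ref{lem:comparing excursions kernel to pinning}. After shifting, $f_n = f_n^a(a,a)$ becomes an integral over $(0,\infty)^{n-1}$ of the \emph{same} product of $\rho$'s as for $f_n^a = f_n^a(0,0)$, except that the two \emph{boundary} factors change: in $f_n$ the first factor is $\rho(s_1 - a + a) = \rho(s_1)$ versus $\rho(s_1)$ again --- wait, more carefully, the right bookkeeping is that $f_n^a(0,0)$ and $f_n$ have identical interior factors $\rho(s_{i+1}-s_i)$ and differ only in how the endpoints $0$ versus $a$ enter the first and last factors. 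So the ratio $f_n^a/f_n$ is an average, against the (positive) interior kernel restricted to $(0,\infty)^{n-1}$, of the ratio of boundary factors $\frac{\rho(s_1)\rho(s_{n-1})}{\rho(s_1+a)\rho(s_{n-1}+a)}$ (with the obvious modification for $n=1,2$).

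Next I would control the single-coordinate factor $\frac{\rho(s)}{\rho(s+a)} = e^{V(s+a)-V(s)}$. By the strict convexity hypothesis $V''\in[1/c,c]$ and symmetry $V'(0)=0$, Taylor expansion gives $V(s+a)-V(s) = a V'(s) + \tfrac{a^2}{2}V''(\xi)$ for some $\xi\in(s,s+a)$, hence
\[
\tfrac{1}{2c}a^2 \;\le\; \tfrac{a^2}{2}V''(\xi) \;\le\; V(s+a)-V(s) - aV'(s) \;\le\; \tfrac{c}{2}a^2 ,
\]
and $V'(s)\ge 0$ for $s\ge 0$ (again by convexity and $V'(0)=0$). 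So on $(0,\infty)$ we have $V(s+a)-V(s) \ge \tfrac{1}{2c}a^2$, giving the one-sided bound $\rho(s)/\rho(s+a)\le e^{-a^2/(2c)}\le 1$ uniformly in $s\ge 0$ --- this already yields $f_n^a/f_n \le 1$, but for the sharper upper bound $e^{-c_1 a + \tilde c_1 a^2}$ I need to show $V'(s)$ is typically of constant order, not just nonnegative, under the relevant averaging measure. The lower bound is easier: I would bound $V(s+a)-V(s) = aV'(s) + O(a^2)$ and use $V'(s)\le V'(s_1^{\max})$-type control... no --- $V'$ is unbounded on $(0,\infty)$, so a pointwise lower bound on $f_n^a/f_n$ requires restricting the integration to a bounded region. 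Concretely, for the lower bound I would restrict both $s_1$ and $s_{n-1}$ to $[0,1]$ (say): on that event $V'(s_j)\le V'(1) =: c_0/2$ and $V''\le c$, so each boundary factor ratio is $\ge e^{-c_0 a - \tilde c_0 a^2}$, and I would show the cost of restricting to $s_1,s_{n-1}\le 1$ is absorbed since $f_n^a(0,0) \ge \int_{\{s_1,s_{n-1}\le 1\}}(\cdots) \ge (\text{const})\cdot f_n$ --- this last comparison, that a positive fraction of the mass defining $f_n$ sits in $\{s_1,s_{n-1}\in[0,1]\}$, needs a short argument using the local limit / Green's function asymptotics behind \eqref{eq:asymptotic of fn} and the symmetry and continuity of $\rho$.

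For the upper bound I would argue dually: $f_n^a/f_n \le e^{-a^2/(2c)}$ is immediate from the pointwise bound above and the $\rho\ge 0$; to upgrade the $a^2$ to a genuine linear term $e^{-c_1 a}$ I would instead write $f_n^a = f_n^a(0,0) = \int_a^\infty \rho(s_1)\, [\text{rest}]$ and compare with $f_n = \int_0^\infty \rho(s_1)[\text{rest}']$ by splitting off the missing slab $s_1\in[0,a]$ (and symmetrically $s_{n-1}\in[0,a]$); a lower bound on the mass in that slab, again via the walk's local limit theorem and the bound $\rho(s_1)\ge \rho(a) \ge \rho(1)>0$ there, produces a linear-in-$a$ defect, hence $f_n^a/f_n \le 1 - c_1' a \le e^{-c_1 a}$ for $a$ small, for a constant uniform in $n$ --- uniformity in $n$ being the key point, which I expect to follow from the fact that the conditional law of $(S_1, S_{n-1})$ given the excursion constraint has a density at any fixed bounded point that is bounded below uniformly in $n$ (equivalently, the renewal mass $f_n$ and the ``restricted'' mass are comparable up to a constant independent of $n$). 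The second displayed inequality of the lemma is then just the first with $c_0 a + \tilde c_0 a^2 \le C_0 a$ and $c_1 a - \tilde c_1 a^2 \ge C_1 a$ for $a \le a_0$ small enough.

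The main obstacle is the \textbf{uniformity in $n$} of the constants: pointwise-in-$(s_1,\dots,s_{n-1})$ estimates on the ratio of integrands are trivial, but translating them into a bound on the ratio of the \emph{integrals} with $n$-independent constants requires knowing that the interior kernel $\rho(s_2-s_1)\cdots\rho(s_{n-1}-s_{n-2})$, integrated over $(0,\infty)^{n-3}$ with $s_1,s_{n-1}$ fixed, puts a uniformly (in $n$) non-degenerate amount of mass near $s_1, s_{n-1} \in [0,1]$ relative to the total. I would obtain this from the local central limit theorem for the killed walk underlying $f_n(x,y)$ --- precisely the estimate already invoked for \eqref{eq:asymptotic of fn} and for $P^0_0(n)\sim \tfrac{1}{\sqrt{2\pi}}n^{-1/2}$ --- which gives $f_n(x,y) \asymp n^{-3/2}\, h(x)h(y)$ uniformly for $x,y$ in compacts, with $h$ the renewal/harmonic function of the walk killed below $0$; comparing $h$-masses then closes the argument.
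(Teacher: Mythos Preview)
Your overall strategy is right and essentially coincides with the paper's: after the shift you have $f_n^a/f_n = \E_{\mu_n}\bigl[e^{-(V(S_1+a)-V(S_1))-(V(S_{n-1}+a)-V(S_{n-1}))}\bigr]$, where $\mu_n$ is the law of the walk conditioned on $\{S_1>0,\dots,S_{n-1}>0,S_n=0\}$ (note that you have this ratio inverted in your sketch, and correspondingly several of your inequalities point the wrong way). You also correctly identify the crux as uniform-in-$n$ control of the law of $(S_1,S_{n-1})$ under $\mu_n$, via Ballot-type estimates.

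The genuine gap is your lower-bound argument. Restricting to $\{S_1,S_{n-1}\in[0,1]\}$ and bounding the integrand there gives only $f_n^a \ge e^{-c_0 a-\tilde c_0 a^2}\cdot \mu_n(S_1,S_{n-1}\le 1)\cdot f_n$, i.e.\ $f_n^a/f_n\ge p\,e^{-c_0 a-\tilde c_0 a^2}$ for some constant $p<1$ independent of $a$; since the target bound must equal $1$ at $a=0$, this multiplicative loss cannot be absorbed into the exponent. What you actually need is a uniform-in-$n$ upper bound on the \emph{first moment} $\E_{\mu_n}[V'(S_1)]$: then Jensen's inequality applied to the convex function $x\mapsto e^{-ax}$ gives $\E_{\mu_n}[e^{-a(V'(S_1)+V'(S_{n-1}))}]\ge e^{-a\,\E_{\mu_n}[V'(S_1)+V'(S_{n-1})]}$ directly. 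For the upper bound you analogously need a uniform \emph{lower} bound on this first moment together with a uniform upper bound on the second moment $\E_{\mu_n}[V'(S_1)^2+V'(S_1)V'(S_{n-1})]$, after which a second-order expansion of $e^{-aX}$ works; your ``slab'' argument as written is also imprecise, since \emph{all} intermediate coordinates change their constraint from $>0$ to $>a$, not just the two boundary ones.

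The paper packages the same estimates differently but equivalently: it differentiates $a\mapsto f_n^a=\pr_{-a}[A_n(-a)]$ directly, obtaining $\partial_a f_n^a|_{a=0}=-2\,\E_0[V'(S_1)\one_{A_n(0)}]$ and an explicit second derivative, applies Taylor's theorem with Lagrange remainder, and then proves the required two-sided bounds on $\E_0[V'(S_1)\one_{A_n(0)}]/f_n$ and on the second-order term uniformly in $n$ via the Ballot theorem $\int_k^{k+1}\pr_x[A_n(0)]\,\dif x\asymp (k+1)n^{-3/2}$ and its two-endpoint variant. These are exactly the conditional first and second moments you need; you just have to use the Ballot estimates to bound moments rather than to justify a restriction.
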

\begin{proof}
Denote by $A_n(y)$ the event $\{S_1>0,...,S_{n-1}>0,S_n=y\}$, where we remind the reader that 
by convention we write $\pr_x[A_n(y)]$ for the density of $S_n$ at $y$ with respect to $\pr_x$ on the event $\{S_1>0,...,S_{n-1}>0\}$.
In other words, $\pr_x[A_n(y)]=f^0_n(x,y)$.
We first note that $f^a_n=f^a_n(0,0)=f^0_n(-a,-a)=\pr_{-a}[A_n(-a)]$, by stationarity.
Taking a derivative from the right-most expression we get 
\[
\frac{\partial}{\partial a}f^a_n = - \E_{-a}[V'(S_1+a)\one_{A_n(-a)}] - \E_{-a}[V'(S_{n-1}+a)\one_{A_n(-a)}].
\]
On the event ${A_n(-a)}$ the random variables $S_1+a$ and $S_{n-1}+a$ have the same distribution under $\pr_{-a}$ and therefore
\[
\frac{\partial}{\partial a}f^a_n = - 2\E_{-a}[V'(S_1+a)\one_{A_n(-a)}].
\]
In particular,
\[
\frac{\partial}{\partial a}{f^a_n}_{|a=0} = - 2\E_{0}[V'(S_1)\one_{A_n(0)}].
\]
A direct calculation for the second derivative yields
\begin{eqnarray*}
\frac{\partial^2}{\partial a^2}f^a_n &=& - 2\frac{\partial}{\partial a}\E_{-a}[V'(S_1+a)\one_{A_n(-a)}]\\
&=& 2\E_{-a}[( V'(S_1+a)^2-V''(S_1+a)+V'(S_1+a)V'(S_{n-1}+a))\one_{A_n(-a)}].
\end{eqnarray*}
A second order Taylor expansion reads
\[
\frac{f^a_n}{ f_n} - 1 = 
- \frac{2 a \E_{0}[V'(S_1)\one_{A_n(0)}]}{\pr_{0}[A_n(0)]} + 
 \frac{2a^2\E_{-a'}[( V'(S_1+a')^2 -V''(S_1+a')+V'(S_1+a')V'(S_{n-1}+a'))\one_{A_n(-a')}]}{\pr_{0}[A_n(0)]},
\]
where $0<a'<a$ (allowed to depend on $n$).
Therefore, the proof is finished once we show that both 
\begin{equation} \label{eq:first derivative}
 c_0 \le \E_{0}[V'(S_1)\one_{A_n(0)}]/\pr_{0}[{A_n(0)}]\le c_1
\end{equation}
and
\begin{equation} \label{eq:second derivative}
 -\tilde{c}_0 \le
\E_{-a'}[( V'(S_1+a')^2-V''(S_1+a')+V'(S_1+a')V'(S_{n-1}+a'))\one_{A_n(-a')}]/\pr_{0}[{A_n(0)}]
\le \tilde{c}_1
\end{equation}
hold for all $0\le a'\le a$ and $n\ge 1$.

To prove \eqref{eq:second derivative} it is enough to show that
\begin{equation} \label{eq:second derivative one}
\E_{-a'}[( V'(S_1+a')^2+V'(S_1+a')V'(S_{n-1}+a'))\one_{A_n(-a')}]/\pr_{0}[{A_n(0)}]
\le \tilde{c}_1
\end{equation}
and
\begin{equation} \label{eq:second derivative two}
\pr_{-a'}[A_n(-a')]/\pr_{0}[{A_n(0)}]=f^{(a')}(n)/f_n\le \tilde{c}_0.
\end{equation}

Let us first show \eqref{eq:first derivative}.
By reversibility of the walk (due to symmetry of $V$) 
$\pr_x[A_n(y)]=\pr_y[A_n(x)]$ for all $x,y\ge 0$. In particular,
\begin{equation*}
\pr_0[S_1>0,..,S_{n-1}>0,S_n\in[k,k+1]] =
\int_k^{k+1}\pr_{0}[S_1>0,..,S_{n-1}>0,S_n=x]\dif x= \int_k^{k+1}\pr_{x}[A_n(0)]\dif x
\end{equation*}
The Ballot theorem \cite[Theorem 1]{addario2008ballot} (and the form we shall use \cite[Theorem 5]{zeitouni2012branching}) therefore reads, for $k\le\sqrt{n}$
\begin{equation} \label{eq:Ballot interpretation}
c_2\frac{k+1}{n^{3/2}} \le \int_k^{k+1}\pr_{x}[A_n(0)]\dif x \le c_3\frac{k+1}{n^{3/2}},
\end{equation}
where the upper bound holds for all $k$.

For the upper bound we get from the right inequality of \eqref{eq:Ballot interpretation} and the 
three assumptions on $V$ that
\begin{eqnarray*}
\E_{0}[V'(S_1)\one_{A_n(0)}] &=&
\int_0^\infty V'(x)\rho(x) \pr_x[A_{n-1}(0)] \dif x \\
&=&
\sum_{k=0}^\infty \int_k^{k+1} V'(x)\rho(x) \pr_x[A_{n-1}(0)]\dif x\\
&\le&
\sum_{k=0}^\infty V'(k+1)\rho(k) \int_k^{k+1} \pr_x[A_{n-1}(0)]\dif x\\
&\le&
\frac{2c_3}{n^{3/2}} \sum_{k=0}^\infty V'(k+1) (k+1) \rho(k) =: \frac{c_1}{\sqrt{2\pi}n^{3/2}}.
\end{eqnarray*}

For the lower bound we get from the left inequality of \eqref{eq:Ballot interpretation} that
\begin{eqnarray*}
\E_{0}[S_1\one_{A_n(0)}] &\ge&
\int_1^2 V'(x)\rho(x) \pr_x[A_{n-1}(0)] \dif x \\
 &\ge&
 \max_{[1,2]}\{V'(x)\rho(x)\}\frac{2c_3}{n^{3/2}} =:\frac{c_0}{\sqrt{2\pi}n^{3/2}}.
\end{eqnarray*}
Using \eqref{eq:asymptotic of fn} and the fact that $\pr_{0}[{A_n(0)}]=f_n$, \eqref{eq:first derivative} 
is now proved.

We now prove \eqref{eq:second derivative one}. We have
\begin{eqnarray*}
\E_{-a}[( V'(S_1+a)^2+V'(S_1+a)V'(S_{n-1}+a))\one_{A_n(-a)}] &=&
\E_{0}[V'(S_1)^2+V'(S_1)V'(S_{n-1}))\one_{S_1>a,...,S_{n-1}>a,S_0=0}]\\
&\le&
\E_{0}[(V'(S_1)^2+V'(S_1)V'(S_{n-1}) )\one_{S_1>0,...,S_{n-1}>0,S_0=0}]
\end{eqnarray*}
by writing the terms in the explicit integral form. 
Now, as in the proof of \eqref{eq:first derivative}
\begin{eqnarray*}
\E_{0}[V'(S_1)^2\one_{S_1>0,...,S_{n-1}>0,S_0=0}] &\le&
\sum_{k=0}^\infty V'(k+1)^2 \rho(k)  \int_k^{k+1} \pr_x[A_{n-1}(0)]\dif x\\
&\le&
\frac{2c_3}{n^{3/2}} \sum_{k=0}^\infty (k+1)V'(k+1)^2 \rho(k)=:\frac{c_5}{\sqrt{2\pi}n^{3/2}}.
\end{eqnarray*}
For the term $\E_{0}[V'(S_1)V'(S_{n-1}) \one_{S_1>0,...,S_{n-1}>0,S_0=0}]$, note that
\begin{eqnarray*}
\pr_0[S_1+y>0,..,S_{n-1}+y>0,S_n+y \in[k,k+1]] &=&
\pr_y[S_1>0,..,S_{n-1}>0,S_n\in[k,k+1]] \\&=&
\int_k^{k+1}\pr_{y}[S_1>0,..,S_{n-1}>0,S_n=x]\dif x\\
&=& \int_k^{k+1}\pr_{x}[A_n(y)]\dif x.
\end{eqnarray*}
We shall use a general variation of The Ballot Theorem:
for $0\le y\le k+1 \le \sqrt{n}/2$,
\begin{equation} \label{eq:Ballot's corollary interpretation}
\int_k^{k+1}\pr_{x}[A_n(y)]\dif x \le c_5\frac{(k+1)(y+1)^2}{n^{3/2}}
\end{equation}
(see \cite[Corollary 2]{zeitouni2012branching}).
Now, as in the proof of \eqref{eq:first derivative}, by the symmetric roles of 
$x$ and $y$ in the integrand we have
\begin{eqnarray*}
\E_{0}[V'(S_1)V'(S_{n-1})\one_{A_n(0)}] &=&
 \int_0^\infty \int_0^\infty V'(x)\rho(x)V'(y)\rho(y) \pr_x[A_{n-2}(y)] \dif x \dif y\\
&=&
 2\int_0^\infty \int_{\sqrt{n}/2}^\infty V'(x)\rho(x)V'(y)\rho(y) \pr_x[A_{n-2}(y)] \dif x \dif y \\
 &+&
  \int_0^{\sqrt{n}/2} \int_0^{\sqrt{n}/2} V'(x)\rho(x)V'(y)\rho(y)  \pr_x[A_{n-2}(y)] \dif x \dif y\\
&=& (I) + (II)
\end{eqnarray*}
To prove $(I)$ we note first that by the local limit theorem $\pr_x[A_{n-2}(y)]\le 
\pr_x[S_{n-2}=y]\le C/\sqrt{n}$
for some constant $C$, uniformly on $x,y\in \reals$ and $n\ge 1$. 
In particular $\pr_x[S_{n-2}=y]$ is uniformly bounded from above by $C$.
Therefore, 
\begin{eqnarray*}
(I) &\le&
 2C\int_0^\infty \int_{\sqrt{n}/2}^\infty V'(x)\rho(x)V'(y)\rho(y) \dif x \dif y \\
&\le& \int_0^\infty V'(y)\rho(y) \left( \frac{2C}{\kappa} \int_{\sqrt{n}/2}^\infty V'(x)e^{-V(x)}\dif x \right) \dif y \\
&=&  \frac{2C}{\kappa}e^{-V(\sqrt{n}/2)}
\int_0^\infty V'(y)\rho(y) \dif y \\
&=&  \frac{2C}{\kappa}e^{-V(\sqrt{n}/2)}
\frac{1}{\kappa}e^{-V(0)} \\
&=&  \frac{2C}{\kappa^2}e^{-V(\sqrt{n}/2)}\\
&=&  o(n^{-3/2}),
\end{eqnarray*}
here we used the symmetry of $V$
to get $\int_0^\infty V'(y)\rho(y) \dif y = e^{-V(0)}=1$
and 
we used the strict convexity of $V$ to conclude that $e^{-V(\sqrt{n}/2)}$ is decaying faster than any polynomial.  
To see $(II)$, we first have that
\[
(II)\le
\sum_{k,l=0}^{\lfloor\sqrt{n}/2\rfloor} \int_l^{l+1} \int_{k}^{k+1} 
V'(x)\rho(x)V'(y)\rho(y) \pr_x[A_{n-2}(y)] \dif x \dif y
\]
By symmetry of $\pr_x[A_{n-2}(y)]$ the right hand side equals
\[
2\sum_{k=0}^{\lfloor\sqrt{n}/2\rfloor}\sum_{l=0}^{k} \int_l^{l+1} \int_{k}^{k+1} 
V'(x)\rho(x)V'(y)\rho(y) \pr_x[A_{n-2}(y)] \dif x \dif y,
\]
which is not larger than
\[
2\sum_{k=0}^{\lfloor\sqrt{n}/2\rfloor}\sum_{l=0}^{k}V'(k+1)\rho(k)
V'(l+1)\rho(l) \int_{k}^{k+1}  \int_l^{l+1} \pr_x[A_{n-2}(y)] \dif x \dif y.
\]
Using \eqref{eq:Ballot's corollary interpretation}, if $l\le k$ then
\[
 \int_l^{l+1}\int_k^{k+1} \pr_x[A_{n-2}(y)]\dif x\dif y \le
 \int_l^{l+1}c_5\frac{(k+1)(y+1)^2}{n^{3/2}} \dif y \le
c_5\frac{(k+1)(l+2)^2}{n^{3/2}}.
\]

\begin{eqnarray*}
(II) &\le&
2\sum_{k=0}^{\lfloor\sqrt{n}/2\rfloor}\sum_{l=0}^{k}
V'(k+1)\rho(k)V'(l+1)\rho(l) \int_{k}^{k+1}  \int_l^{l+1} \pr_x[A_{n-2}(y)] \dif x \dif y\\
&\le&
\frac{c_5}{n^{3/2}}\sum_{k=0}^{\sqrt{n}/2} \sum_{l=0}^k (l+2)^2V'(l+1)\rho(l)(k+1) V'(k+1)\rho(k)\\
&\le&
\frac{c_5}{n^{3/2}}\sum_{k=0}^{\infty}(k+1)^2 V'(k+1)^2\rho(k)\\
&=:&
\frac{c^6}{n^{3/2}}.
\end{eqnarray*}
\end{proof}

\subsection{Comparing the partition functions}\label{sec:comparing partition function}

\begin{lem}\label{lem:bounding gen fcn of varphi wetting in terms of standard}
Fix $\varphi_a$ and assume Condition (A) from Definition \ref{def:cond A} with the constant $C$.
Then, for there is a constant $C'$ and a positive decreasing function $C'(a)$ so that 
$C'(a)\to 1$ as $a\to 0$, and and for all $N\ge1$ we have
\begin{equation}\label{eq:constrained part fcns bds varphi in terms of near crit}
Z^c_{\beta_c-C'a,N} \le \ZQ^c_{ \varphi_a,N} \le Z^{c}_{\beta_c+C'a,N},
\end{equation}
and
\begin{equation}\label{eq:free part fcns bds varphi in terms of near crit}
C'(a)Z^{f}_{\beta_c-C'a,N} \le \ZQ^{f}_{ \varphi_a,N} \le Z^{f}_{\beta_c+C'a,N}.
\end{equation}
\end{lem}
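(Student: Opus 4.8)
The plan is to reduce the partition functions of the $\varphi_a$-wetting model to those of the standard $\delta$-pinning model at a shifted pinning parameter, by integrating out the location of the interface within the strip $[0,a]$ at each contact point. The starting point is the renewal-type decomposition of $\ZQ^c_{\varphi_a,N}$ over the contact set $\mathcal{A}^a_N = \{t_1 < \dots < t_m\}$: writing the strip excursions via the kernels $f^a_n(x,y)$ of \eqref{eq:def of f n a x y} and collecting the pinning weights $e^{\varphi_a(S_{t_i})}$ at the contact points, one gets an expression of the form
\[
\ZQ^c_{\varphi_a,N} = \sum_{m\ge 1}\;\sum_{0=t_0<t_1<\dots<t_m=N}\;\int_{[0,a]^{m-1}} \prod_{i=1}^m f^a_{t_i-t_{i-1}}(x_{i-1},x_i)\,\prod_{i=1}^{m-1} e^{\varphi_a(x_i)}\,\dif x_1\cdots\dif x_{m-1},
\]
with $x_0=x_m$ identified suitably with the boundary (and the analogous formula with a free last excursion for $\alpha=f$).

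The key step is a two-sided comparison performed term by term. On one side, Lemma \ref{lem:comparing excursions kernel to pinning} (monotonicity and $f^a_n(a,a)=f_n$) gives $f^a_n(x,y)\le f_n$ for all $x,y\in[0,a]$, and Lemma \ref{lem:estimating fn(a)}, specifically \eqref{eq: lower bound comparison fan fn}, gives $f^a_n \ge e^{-C_0 a} f_n$; combined with $f^a_n(x,y)\ge f^a_n(0,0)=f^a_n$ from \eqref{eq: upper bound fan fn}, this sandwiches every strip kernel between $e^{-C_0 a} f_n$ and $f_n$. Simultaneously, Condition (A) controls the pinning weight: $\int_0^a e^{\varphi_a(x)}\dif x$ lies between $e^{\beta_c} e^{-Ca}$ and $e^{\beta_c} e^{Ca}$. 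Substituting these bounds into each of the $m-1$ integrations replaces each strip excursion-plus-pinning block by $f_n \cdot e^{\beta_c}$ up to a multiplicative error $e^{\pm C'a}$ (with $C'$ absorbing $C$ and $C_0$), and since there are at most $N$ such blocks the total accumulated error is bounded — crucially — by $e^{\pm C' a \cdot (\text{number of blocks})}$, but each $e^{\beta_c}$ factor can be re-absorbed as a shift $\beta_c \mapsto \beta_c \pm C'a$ in the standard model weight $e^\beta$ appearing in \eqref{eq:StandardPinningConstrainedDef1}. That is, one matches the per-contact weight $e^{\beta_c \pm C'a}$ of the shifted standard model, so the $N$-dependence of the error is exactly what the standard partition function itself carries, and \eqref{eq:constrained part fcns bds varphi in terms of near crit} follows. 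For the free case one extra excursion is unpinned at the right end; its kernel is again squeezed between $P^a_0(\cdot)$-type and $P(\cdot)$-type quantities, which by \eqref{eq:comparing Pan in terms of Pn} and the corollary following it differ by a factor $C^a(0)\le C'(a)^{-1}\to 1$, producing the extra prefactor $C'(a)$ in \eqref{eq:free part fcns bds varphi in terms of near crit}.

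I expect the main obstacle to be bookkeeping the interplay between the location variables $x_i\in[0,a]$ and the shift: one must check that replacing $f^a_{t_i-t_{i-1}}(x_{i-1},x_i)$ by its $x$-independent bounds $f_n$ and $e^{-C_0a}f_n$ does not lose the correct coupling between consecutive excursions (it does not, because the bounds are uniform in $x,y\in[0,a]$, so the integrals over $x_1,\dots,x_{m-1}$ decouple and each contributes exactly $\int_0^a e^{\varphi_a(x)}\dif x$), and that the endpoint conventions ($x_0=x_m$, the constrained endpoint in $[0,a]$, the free last excursion) are handled consistently with the definition \eqref{eq:stripModelGenFucn} and with the standard-model partition functions $Z^\alpha_{\beta,N}$. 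A secondary point is verifying that the comparison is genuinely two-sided with the \emph{same} shifted model on both ends — i.e. that the lower bound uses $\beta_c - C'a$ and the upper bound $\beta_c + C'a$ with a single constant $C'$ — which just requires taking $C' := \max(C, C_0)$ (or a fixed multiple thereof) and $a_0$ small enough that Lemma \ref{lem:estimating fn(a)}'s regime $a\le a_0$ applies.
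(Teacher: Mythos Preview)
Your approach is essentially identical to the paper's: the same renewal decomposition over the contact set, the same uniform-in-$x,y$ sandwich $e^{-C_0 a}f_n \le f^a_n(x,y) \le f_n$ from Lemmas \ref{lem:comparing excursions kernel to pinning} and \ref{lem:estimating fn(a)}, the same use of Condition (A) to evaluate each decoupled integral $\int_0^a e^{\varphi_a}\dif x$, and the same treatment of the free tail via $P^a_x(\cdot)$ versus $P(\cdot)$ yielding the prefactor $C'(a)=C^a(0)e^{-Ca}$. One small correction: the lower bound collects \emph{both} error sources simultaneously, so you need $C' \ge C + C_0$ rather than $\max(C,C_0)$ (your hedge ``or a fixed multiple thereof'' covers this, and the paper simply sets $C'=C+C_0$).
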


\begin{proof}
We start with the constraint case.
\begin{eqnarray*}
\ZQ^c_{ \varphi_a,N} (0,y) &=&
\sum_{k=0}^{N-1}  \sum_{0=t_0<t_1<...<t_k<N} \int_0^a\cdots\int_0^a
\prod_{i=1}^k f^a_{t_i-t_{i-1}}(y_{i-1},y_{i})e^{\varphi_a(y_i)}f^a_{N-t_k}(y_{k},y)e^{\varphi_a(y)} \dif y_i  =: (*).
  \end{eqnarray*}
Using \eqref{eq: upper bound fan fn} and Condition (A) we have the following upper bounds.
 \begin{eqnarray*}
 (*) &\le&
 \sum_{k=0}^{N-1} \sum_{0=t_0<t_1<...<t_k<N} \int_0^a\cdots\int_0^a
f_{N-t_k} \prod_{i=1}^k f_{t_i-t_{i-1}}  e^{\varphi_a(y_i)} e^{\varphi_a(y)} \dif y_i \\
&=&
  e^{\varphi_a(y)} \sum_{k=0}^{N-1} (\int_0^a e^{\varphi_a(z)} \dif z )^k \sum_{0=t_0<t_1<...<t_k<N}
f_{N-t_k} \prod_{i=1}^k f_{t_i-t_{i-1}}  \\
&\le&
  e^{\varphi_a(y)} \sum_{k=0}^{N-1} e^{(\beta_c+C a)k} f_{N-t_k} \sum_{0=t_0<t_1<...<t_{k}<N}
\prod_{i=1}^k f_{t_i-t_{i-1}} \\
\end{eqnarray*}
Hence 
\begin{eqnarray*}
\ZQ^c_{ \varphi_a,N}=\int_0^a \ZQ^c_{ \varphi_a,N} (0,y)\dif y &\le&
\int_0^a  e^{\varphi_a(y)} \dif y \sum_{k=0}^{N-1} e^{(\beta_c+C a)k}  \sum_{0=t_0<t_1<...<t_{k}<N} f_{N-t_k} \prod_{i=1}^k f_{t_i-t_{i-1}}  \\
&\le & \sum_{k=0}^{N-1} e^{(\beta_c+C a)(k+1)}  \sum_{0=t_0<t_1<...<t_{k}<N}f_{N-t_k} \prod_{i=1}^k f_{t_i-t_{i-1}}  \\
&=& \sum_{k=1}^{N} e^{(\beta_c+C a)k}  \sum_{0=t_0<t_1<...<t_{k}=N}\prod_{i=1}^k f_{t_i-t_{i-1}}  \\
&=& Z^c_{\beta_c + C a ,N}. 
\end{eqnarray*}

Similarly for the lower bound, using \eqref{eq: lower bound comparison fan fn} and Condition (A), we get
\begin{eqnarray*}
(*) &\ge&
e^{\varphi_a(y)} \sum_{k=0}^{N-1} e^{(\beta_c-C a - C_0a )k} f_{N-t_k} \sum_{0=t_0<t_1<...<t_{k}<N} 
\prod_{i=1}^k f_{t_i-t_{i-1}} 
\end{eqnarray*}

Hence,
\begin{eqnarray*}
\ZQ^c_{ \varphi_a,N}=\int_0^a \ZQ^c_{ \varphi_a,N} (0,y)\dif y &\ge&
\int_0^a e^{\varphi_a(y)} \dif y \sum_{k=0}^{N-1} e^{(\beta_c-C a - C_0a )k} \sum_{0=t_0<t_1<...<t_{k}<N}f_{N-t_k}
\prod_{i=1}^k f_{t_i-t_{i-1}} \\
&\ge&
\dif y \sum_{k=1}^{N} e^{(\beta_c-C a - C_0a )k} \sum_{0=t_0<t_1<...<t_{k}=N} \prod_{i=1}^k f_{t_i-t_{i-1}} \\
&=& Z^c_{\beta_c - (C+C_0) a ,N}.
\end{eqnarray*}
Since $\ZQ^c_{ \varphi_a,N} =\int_{0}^a \ZQ^c_{ \varphi_a,N} (0,y)\dif y$, 
setting $C'=C+C_0$ we conclude the two bounds.

The free case is done in a similar manner. Indeed summing over the last contact before time $N$, we have
\begin{eqnarray*}
\ZQ^f_{ \varphi_a,N} &=&
\sum_{k=0}^N \int_0^a \ZQ^c_{a,\varphi_a,k}(0,y)  P^a_y(N-k) \dif y =: (*).
  \end{eqnarray*}
Using \eqref{eq:comparing Pan in terms of Pn}, the line before it, Condition (A), and the constraint case we have the following upper bound.
 \begin{eqnarray*}
 (*) &\le&
 \sum_{k=0}^N P(N-k) \int_0^a \ZQ^c_{a,\varphi_a,k}(0,y) \dif y \\
&\le&
\sum_{k=0}^N P(N-k) Z^c_{\beta_c + C a ,k}\\
 &=&
Z^f_{\beta_c + C a ,N}.
\end{eqnarray*}
Similarly for the lower bound, using Using \eqref{eq:comparing Pan in terms of Pn}, the line before it, \eqref{eq: lower bound comparison fan fn} and Condition (A), we get
 \begin{eqnarray*}
 (*) &\ge&
C^a(0)e^{-Ca}Z^f_{\beta_c - (C+C_0) a ,N}.
\end{eqnarray*}
Setting $C'(a):=C^a(0)e^{-Ca}$, we are done.
\end{proof}

\subsection{Derivative of $\varphi_a$-strip wetting with respect to near-critical standard wetting }\label{sec:Derivating varphi-strip wetting to near critical standard wetting}
In this section we will prove that for the contact set distribution, the $\varphi_a$-strip wetting is approximating
the corresponding a near-critical standard wetting model, that is so that 
it has critical pinning strength which is linearly perturbed by a constant multiple of the strip-size. 

Remember the definition in \eqref{eq:def of measure on sets with varphi} with the notations above it. 
We introduce the analog for the standard wetting model. 
\begin{equation}\label{eq:def of measure on sets for standard wettin}
\Pset^\alpha_{\beta,N}(\mathcal{A}_N=A/N):=
\pr^\alpha_{e^{\beta},N} (\tau_i=t_i, i\le \ell_N), 
\end{equation}
and $\Eset^\alpha_{\beta,N}$, $\alpha\in\{c,f\}$, the corresponding expectation.
Here as well, in a somewhat abuse of notation we use $\Pset^c_{\beta,N}(A)$ and 
$\Pset^c_{\beta,N}(\mathcal{A}_N=A/N)$ with no distinction.
Note again that by definition $\Pset^c_{\beta,N}(A)=0$ whenever $\ell_N(A)<N$.

\begin{lem}\label{lem:derivative varphi wetting to near crit standard}
Assume $\varphi_a$ satisfies Condition (A) from Definition \ref{def:cond A} with the constant $C$. 
Remember the definitions from \eqref{eq:def of measure on sets with varphi}.
There are some constants $c_i, i=1,...,6$, so that for $\alpha\in\{c,f\}$ 
\[
\frac{\dif\Qset^\alpha_{ \varphi_a,N}}{\dif\Pset^\alpha_{\beta_c+c_3a,N}} \le \frac{Z^\alpha_{\beta_c+c_1 a,N}}{C^\alpha(a)Z^\alpha_{\beta_c-c_2a,N}}
\]
and
\[
\frac{\dif\Qset^\alpha_{ \varphi_a,N}}{\dif\Pset^\alpha_{\beta_c-c_6a,N}} \ge \frac{Z^\alpha_{\beta_c-c_4 a,N}}{Z^\alpha_{\beta_c+c_5a,N}}.
\]
Here $C^c(a)=1$ and $C^f(a)=C'(a)$ was given in \eqref{eq:free part fcns bds varphi in terms of near crit}.
\end{lem}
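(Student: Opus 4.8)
The plan is to express both $\Qset^\alpha_{\varphi_a,N}(A)$ and $\Pset^\alpha_{\beta,N}(A)$ as explicit products of excursion kernels divided by partition functions, and then compare them termwise. Concretely, for a set $A=\{0=t_0<t_1<\dots<t_k\}$ (with $t_k=N$ in the constrained case, or $t_k=\ell^a_N$ being the last contact in the free case followed by a final stretch staying above $a$), the definition \eqref{eq:def of measure on sets with varphi} together with the renewal decomposition used in Lemma \ref{lem:bounding gen fcn of varphi wetting in terms of standard} gives
\[
\Qset^c_{\varphi_a,N}(A)=\frac{1}{\ZQ^c_{\varphi_a,N}}\int_0^a\!\!\cdots\!\int_0^a \prod_{i=1}^k f^a_{t_i-t_{i-1}}(y_{i-1},y_i)\,e^{\varphi_a(y_i)}\,\dif y_i,
\]
with $y_0=0$, and the analogous formula for the free case with a terminal factor $P^a_{y_k}(N-t_k)$, while for the standard model
\[
\Pset^c_{\beta,N}(A)=\frac{1}{Z^c_{\beta,N}}\,e^{\beta k}\prod_{i=1}^k f_{t_i-t_{i-1}}.
\]
So first I would write these two displays down carefully for both boundary conditions, being explicit about the $y$-integrations and the role of the last excursion.

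Next I would bound the ratio of the numerators. For the upper bound on $\dif\Qset^\alpha/\dif\Pset^\alpha$ one uses the upper bound $f^a_n(x,y)\le f_n$ from \eqref{eq: upper bound fan fn} to drop the $y$-dependence in each kernel, and then Condition (A) to bound each $\int_0^a e^{\varphi_a(y_i)}\dif y_i\le e^{(\beta_c+Ca)}$ (after pulling out the $\dif y_i$), which converts the product $\prod_i f^a_{t_i-t_{i-1}}(y_{i-1},y_i)e^{\varphi_a(y_i)}$ into at most $e^{(\beta_c+Ca)k}\prod_i f_{t_i-t_{i-1}}$ — exactly the numerator of $\Pset^c_{\beta_c+Ca,N}(A)$; in the free case one additionally uses $P^a_{y_k}(N-t_k)\le P(N-t_k)$ via \eqref{eq:comparing Pan in terms of Pn}. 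For the lower bound one uses the lower bound $f^a_n\ge e^{-C_0 a}f_n$ from \eqref{eq: lower bound comparison fan fn} (together with monotonicity of $f^a_n(x,y)$ in $x,y$ to pass from $f^a_n(y_{i-1},y_i)$ down to $f^a_n=f^a_n(0,0)$) and the lower half of Condition (A), giving a factor $e^{-(C+C_0)a}$ per excursion, plus $P^a_{y_k}(N-t_k)\ge C^a(0)P(N-t_k)$ in the free case; this produces the numerator of $\Pset^\alpha_{\beta_c-(C+C_0)a,N}(A)$ up to a global constant. Matching powers of $a$ fixes the constants $c_1,\dots,c_6$: e.g. $c_1=c_3=C$ works for the upper bound numerator, and $c_4=c_6=C+C_0$ for the lower, with the $C$'s coming from Lemma \ref{lem:estimating fn(a)} and Definition \ref{def:cond A}.

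Finally I would divide through by the partition functions. The numerator comparison gives $\Qset^\alpha_{\varphi_a,N}(A)\cdot\ZQ^\alpha_{\varphi_a,N}\le \Pset^\alpha_{\beta_c+c_3 a,N}(A)\cdot Z^\alpha_{\beta_c+c_1 a,N}$ pointwise in $A$ (for the upper bound; similarly for the lower), so
\[
\frac{\dif\Qset^\alpha_{\varphi_a,N}}{\dif\Pset^\alpha_{\beta_c+c_3 a,N}}(A)\le \frac{Z^\alpha_{\beta_c+c_1 a,N}}{\ZQ^\alpha_{\varphi_a,N}},
\]
and then I bound $\ZQ^\alpha_{\varphi_a,N}$ from below using Lemma \ref{lem:bounding gen fcn of varphi wetting in terms of standard}, namely $\ZQ^c_{\varphi_a,N}\ge Z^c_{\beta_c-C'a,N}$ and $\ZQ^f_{\varphi_a,N}\ge C'(a)Z^f_{\beta_c-C'a,N}$; writing $C^c(a)=1$, $C^f(a)=C'(a)$ and relabelling constants ($c_2$ playing the role of $C'$) yields exactly the stated upper bound. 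The lower bound is symmetric, using the upper bound on $\ZQ^\alpha_{\varphi_a,N}$ from the same lemma. The main obstacle is bookkeeping rather than conceptual: one must be careful that the exponent shift $c_3 a$ (resp.\ $c_6 a$) appearing in the \emph{reference} measure $\Pset^\alpha$ is consistent with the exponent $c_1 a$ (resp.\ $c_4 a$) in the numerator partition function after the $\int_0^a e^{\varphi_a}$ factors are absorbed — i.e.\ that one does not double-count or mismatch the $\beta_c$-centering between the $k$ excursion-weights and the $k$ (or $k+1$) pinning integrals — and that the free-case terminal factor $P^a_{y_k}(N-t_k)$ is handled uniformly in $y_k\in[0,a]$ via the monotonicity bounds $C^a(0)P(\cdot)\le P^a_{y_k}(\cdot)\le P(\cdot)$.
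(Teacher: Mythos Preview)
Your proposal is correct and follows essentially the same approach as the paper: write $\Qset^\alpha_{\varphi_a,N}(A)$ as an integral of products of $f^a_{t_i-t_{i-1}}(y_{i-1},y_i)e^{\varphi_a(y_i)}$ (with the terminal $P^a_{y_k}(N-t_k)$ in the free case), bound each factor above and below via \eqref{eq: upper bound fan fn}, \eqref{eq: lower bound comparison fan fn}, Condition~(A), and the monotonicity of $P^a_x$, and then absorb the partition-function ratio using Lemma~\ref{lem:bounding gen fcn of varphi wetting in terms of standard}. The only caveat is that the inequality you wrote as $C^a(0)P(\cdot)\le P^a_{y_k}(\cdot)$ is only asymptotic in \eqref{eq:comparing Pan in terms of Pn}, not an exact pointwise bound; the paper sidesteps this by stating the free lower bound without the $C^f(a)$ prefactor, so be careful there with the bookkeeping you yourself flagged.
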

\begin{proof}
Assume that $A=\{t_0,...,t_k\}$ so that $0=t_0<...<t_k=N$.
We have
\begin{eqnarray*}
\Qset^c_{ \varphi_a,N}(\mathcal{A}^a_N=A/N) &=&
\frac{1}{\ZQ^c_{ \varphi_a,N}}\int_{0}^{a}\cdots \int_{0}^{a} \prod_{i=1}^k f^a_{t_i-t_{i-1}}(y_{i-1},y_i)e^{\varphi_a(y_i)}\dif y_i
=: (*).
\end{eqnarray*}
Using \eqref{eq: lower bound comparison fan fn}, Condition (A), and Lemma \ref{lem:bounding gen fcn of varphi wetting in terms of standard} we have
\begin{eqnarray*}
 (*) &\le& 
\frac{1}{\ZQ^c_{ \varphi_a,N}}e^{(\beta_c+Ca)k}\prod_{i=1}^k f_{t_i-t_{i-1}}\\
&=&
\frac{Z^c_{\beta_c+Ca,N}}{\ZQ^c_{ \varphi_a,N}}\Pset^c_{\beta_c+Ca,N}(\mathcal{A}_N=A/N)\\
&\le& 
\frac{Z^c_{\beta_c+Ca,N}}{Z^c_{\beta_c-C'a,N}}\Pset^c_{\beta_c+Ca,N}(\mathcal{A}_N=A/N).
\end{eqnarray*}
The the lower bound is analogous.
For the free case, fix $A=\{t_0,...,t_k\}$ so that $0=t_0<...<t_k<N$.
\begin{eqnarray*}
\Qset^f_{ \varphi_a,N}(\mathcal{A}^a_N=A/N) &=&
\frac{1}{\ZQ^f_{ \varphi_a,N}}\int_{0}^{a}\cdots \int_{0}^{a} \prod_{i=1}^k f^a_{t_i-t_{i-1}}(y_{i-1},y_i)e^{\varphi_a(y_i)}P^a_{y_k}(N-t_k)\dif y_i
=: (*).
\end{eqnarray*}
Using \eqref{eq: lower bound comparison fan fn}, Condition (A), and Lemma \ref{lem:bounding gen fcn of varphi wetting in terms of standard} we have
\begin{eqnarray*}
 (*) &\le& 
\frac{1}{\ZQ^f_{ \varphi_a,N}} e^{(\beta_c+Ca)(k-1)}\prod_{i=1}^k f_{t_i-t_{i-1}}
\int_{0}^{a} P^a_{y_k}(N-t_k)\varphi_a(y_k)\dif y_k \\
&\le& 
\frac{1}{\ZQ^f_{ \varphi_a,N}} e^{(\beta_c+Ca)k}\prod_{i=1}^k P(N-t_k) f_{t_i-t_{i-1}}\\
&=&
\frac{Z^f_{\beta_c+Ca,N}}{\ZQ^f_{ \varphi_a,N}}\Pset^f_{\beta_c+Ca,N}(\mathcal{A}_N=A/N)\\
&\le& 
\frac{Z^f_{\beta_c+Ca,N}}{C'(a)Z^f_{\beta_c-C'a,N}}\Pset^c_{\beta_c+Ca,N}(\mathcal{A}_N=A/N).
\end{eqnarray*}
Similarly for the lower bound, where we should omit the $C'(a)$ in the analogous statement.

\end{proof}

\section{Near-critical standard wetting, scaling limit of the contact set}\label{sec:near critical standard wetting}
In this section we shall use a result by Julien Sohier on order $1/\sqrt{N}$ near-critical pinning models 
defined by a renewal process with free boundary conditions \cite{sohier2009finite} to deduce that for $o(1/\sqrt{N})$ near-critical 
standard wetting models, and also for pinning models defined by a renewal process with constraint boundary conditions, 
the rescaled limiting contact set coincides with the one which is corresponding to the critical pinning model. 
That is, very roughly speaking, we shall show that in the standard wetting model, the rescaled contact set limit is 
invariant under $o(1/\sqrt{N})$ linear perturbation of the critical pinning strength. 
We now make these statements exact and formal.

First, let us formulate Sohier's result. Let $\tau$ be a renewal process on the positive integers with 
inter-arrival mass function $K$. More precisely, let $\tau_k=\sum_{i=1}^k l_i$ where $l_i$ are i.i.d.\ random variables with 
$\renP(l_1=n)=K(n)$, then $\tau$ is the random subset $\tau: = \{\tau_i:i\ge 0\}\subset \N$ with respect to $\renP$. 
Let $\renE$ be the corresponding expectation. 

Assume that $K(n)=\frac{L(n)}{n^{3/2}}$, where $L$ is slowly varying at infinity (i.e.\ $L(cx)/L(x)\to 1$ as $x\to \infty$ for all $c>0$).

Let $\renP_{\beta,N}$ be a probability measure on subsets of $\{0,...,N\}$ and naturally, on subsets of $\N$, defined by
\[
  \dif \renP_{\beta,N} (\tau)=\dif \renP_{\beta,N} (\tau \cap [0,N]) := \frac{1}{\renZ_{\beta,N}} \exp(\beta |\tau \cap [0,N]|)\dif \renP(\tau)
\]
so that the partition function is $\renZ_{\beta,N}=\renE[\exp(\beta |\tau \cap [0,N]|]$. Let $\renE_{\beta,N}$ be the corresponding expectation.
We also define $\beta_c^{(K)}$ 
by the identity $e^{\beta_c^{(K)}}\sum_{n\ge1}K(n)=1$. Obviously, one notes that $\beta_c^{(K)}=0$ whenever $\sum_{n=1}^\infty K(n) =1$. 

As in Section \ref{sec:main results}, in this section weak convergence of closed random subsets of $[0,1]$ 
is with respect to the Matheron topology on closed subsets.

For readability, we exclude some notations which are irrelevant to our argument and we now formulate a special version of Sohier's theorem.
For elaborated discussion see Sohier \cite[Sections 1 and 3]{sohier2009finite}. See also the monograph \cite{giacomin2007random} for
a comprehensive, rich, and approachable analysis of the renewal model. 

\begin{thm}[Theorem 3.1.(1) and part of the proof of \cite{sohier2009finite} in the case $\alpha=\frac{1}{2}$, $L\sim C_K=e^{\beta_c}$]\label{thm:SohierNearCrit}
Assume $K(n)=q(n)=\frac{C_K}{n^{3/2}}$ so that $\sum_{n\ge 1}K(n)=1$. Let $b=2\sqrt{\pi}C_K$ and fix $\epsilon\in\reals$. 
Then, under $\renP_{\frac{b}{\sqrt{N}}\epsilon,N}$ the rescaled contact set 
$\mathcal{A}_N:=\frac{1}{N}\tau\cap [0,N]:=\{\frac{i}{N}:i\in\tau\cap [0,N]\}\subset[0,1]$ 
is converging weakly to a random set $\mathcal{B}_{1/2}$.
Moreover, the law of $\mathcal{B}_{1/2}$ is absolutely continuous with respect to the law of $\mathcal{A}_{1/2}$, the set of zeros in $[0,1]$ of the standard Brownian motion, 
with Radon-Nikodym density $\frac{\exp(\epsilon L_1)}{\E[\exp(\epsilon L_1)]}$, where $L_1$ is the local time in $0$ of the Brownian motion at time $1$
endowed with probability measure $\pr$ and expectation $\E$.
In particular, for every continuous bounded function $\Phi:\mathcal{F}\to \reals$, where $\mathcal{F}$ is the space of closed sets in $[0,1]$ with the Matheron topology, 
it holds that
\begin{equation}\label{eq:sohier prob limit near crit} 
\renE_{\frac{b\epsilon}{\sqrt{N}},N} [\Phi(\mathcal{A}_N)] =  
\renE\left[\exp\left(b\epsilon\frac{|\tau\cap [0,N]|}{\sqrt{N}}\right)\Phi(\mathcal{A}_N)\right]\to
\E[\exp(\epsilon L_1)\Phi(\mathcal{A}_{1/2})], 
\end{equation}
and specifically
\begin{equation}\label{eq:sohier partition function limit near crit} 
\renZ_{\frac{b\epsilon}{\sqrt{N}},N} =
\renE\left[\exp \left(b\epsilon\frac{|\tau\cap [0,N]|}{\sqrt{N}}\right)\right]
\to \E[\exp(\epsilon L_1)].
\end{equation}
\end{thm}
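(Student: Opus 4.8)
The statement is a theorem of Sohier; the plan is to re-derive it by combining the invariance principle for regenerative sets with an exponential-tilting computation and a uniform-integrability bound coming from the near-critical free energy.

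\emph{Step 1: the untilted limit, jointly with the counting functional.} Under $\renP$, since $K(n)=C_K n^{-3/2}$ lies in the domain of attraction of a $1/2$-stable law, the rescaled renewal set $\mathcal{A}_N=\frac1N(\tau\cap[0,N])$ converges in the Matheron topology to the closed range of a $1/2$-stable subordinator $\sigma$, which has the same law as $\mathcal{A}_{1/2}$ (this is the classical invariance principle for regenerative sets; see e.g. \cite{giacomin2007random}). I would upgrade this to a \emph{joint} convergence of the pair $\bigl(\mathcal{A}_N,\ b\,|\tau\cap[0,N]|/\sqrt N\bigr)$ to $(\mathcal{A}_{1/2},\ L_1)$. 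The key point is that the counting functional is, up to normalization, the inverse of the partial-sum process $k\mapsto\tau_k$: the functional convergence $\frac1N\tau_{\lfloor c^{-1}s\sqrt N\rfloor}\Rightarrow\sigma_s$ yields $\frac{c}{\sqrt N}|\tau\cap[0,N]|=\frac{c}{\sqrt N}\sup\{k:\tau_k\le N\}\Rightarrow\inf\{s:\sigma_s>1\}$, and the right-hand side is exactly the local time at level $1$ of the limiting regenerative set. The constant is pinned down by the renewal-function asymptotics $\renP(n\in\tau)\sim\frac{1}{2\pi C_K}\,n^{-1/2}$ — equivalently by the free-energy computation in Step 2 — and it is here that $b=2\sqrt\pi C_K$, together with the normalization of the Brownian local time $L_1$, is matched.

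\emph{Step 2: the tilted expectation and uniform integrability.} By definition of $\renP_{\beta,N}$, the left side of \eqref{eq:sohier prob limit near crit} is literally $\renE\bigl[\exp(b\epsilon\,|\tau\cap[0,N]|/\sqrt N)\,\Phi(\mathcal{A}_N)\bigr]$; since $\Phi$ is bounded and Matheron-continuous and $x\mapsto e^{b\epsilon x}$ is continuous, Step 1 already gives convergence of the integrand in distribution to $\exp(\epsilon L_1)\Phi(\mathcal{A}_{1/2})$. To move the limit inside the expectation I would prove that $\exp(b\epsilon\,|\tau\cap[0,N]|/\sqrt N)$ is uniformly integrable, for which it suffices that $\sup_N\renE\bigl[\exp(\lambda\,|\tau\cap[0,N]|/\sqrt N)\bigr]<\infty$ for every $\lambda>0$. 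This quantity is the pinning partition function $\renZ_{\lambda/\sqrt N,N}$ at pinning strength $\lambda/\sqrt N>\beta_c^{(K)}=0$; the usual sub/super-multiplicativity argument gives $\renZ_{\beta,N}\le C e^{F(\beta)N}$ for all $\beta\ge 0$, $N\ge1$, with $F$ the free energy, and a Tauberian estimate on the defining relation $\sum_n K(n)e^{-Fn}=e^{-\beta}$ (using $\sum_n n^{-3/2}(1-e^{-Fn})\sim 2\sqrt\pi\sqrt F$) gives $F(\beta)\sim(\beta/b)^2$ as $\beta\downarrow0$. Hence $F(\lambda/\sqrt N)\,N\le C\lambda^2$ uniformly in $N$, and the required bound follows.

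\emph{Step 3: conclusion.} Convergence in distribution plus uniform integrability gives $\renE\bigl[\exp(b\epsilon\,|\tau\cap[0,N]|/\sqrt N)\,\Phi(\mathcal{A}_N)\bigr]\to\E[\exp(\epsilon L_1)\Phi(\mathcal{A}_{1/2})]$, which is \eqref{eq:sohier prob limit near crit}; taking $\Phi\equiv1$ yields \eqref{eq:sohier partition function limit near crit}, and dividing one by the other identifies the weak limit $\mathcal{B}_{1/2}$ of $\mathcal{A}_N$ as the law of $\mathcal{A}_{1/2}$ reweighted by $\exp(\epsilon L_1)/\E[\exp(\epsilon L_1)]$. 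The main obstacle is Step 1: Matheron convergence of the sets alone does \emph{not} control $|\tau\cap[0,N]|$, since the cardinality is not a continuous functional of the set in that topology, so the joint convergence with the correct local-time normalization must be extracted from the subordinator picture (or from a separate renewal/moment estimate on the number of points in short windows), and the precise value of $b$ must be reconciled with the normalization of $L_1$. Step 2's uniform integrability is then routine once the near-critical free-energy asymptotics are in place.
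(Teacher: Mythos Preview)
The paper does not supply its own proof of this statement: the theorem is quoted from Sohier (as the header indicates), and only two remarks on the normalizing constants $a_n,b_n$ follow. There is therefore no in-paper argument to compare against; your three-step outline---joint convergence of the regenerative set with its counting process, then a uniform-integrability upgrade of the exponential tilt---is the standard route and is essentially how such results are proved. Step~1 is correctly identified as the substantive point, and your matching of the constant $b=2\sqrt\pi C_K$ via the inverse-subordinator picture is right.

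There is, however, a genuine gap in Step~2. Sub-multiplicativity of the \emph{free} partition function $\renZ_{\beta,N}=\renE[e^{\beta|\tau\cap[0,N]|}]$ yields, via Fekete, the inequality $\renZ_{\beta,N}\ge e^{NF(\beta)}$, which is the wrong direction for your purpose. If instead you invoke super-multiplicativity of the \emph{constrained} partition function to get $\renZ^c_{\beta,N}\le C e^{NF(\beta)}$ and then decompose $\renZ_{\beta,N}=\sum_{g\le N}\renZ^c_{\beta,g}\,\bar K(N-g)$, you pick up an extra factor
\[
\sum_{j\ge 0}e^{-F(\beta)j}\bar K(j)=\frac{1-e^{-\beta}}{1-e^{-F(\beta)}}\sim \frac{b^2}{\beta},
\]
which at $\beta=\lambda/\sqrt N$ diverges like $\sqrt N$. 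So the bound $\renZ_{\beta,N}\le Ce^{NF(\beta)}$ with $C$ uniform in small $\beta$ is not available from sub/super-multiplicativity alone.

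The clean repair is a direct Chernoff bound on the counting variable: since $\renP(|\tau\cap[0,N]|\ge k)=\renP(\tau_k\le N)\le e^{sN}\hat K(s)^k$ and $\hat K(s)=\sum_n K(n)e^{-sn}=1-b\sqrt s+O(s)$, the choice $\sqrt s=bk/(2N)$ gives $\renP\bigl(|\tau\cap[0,N]|/\sqrt N\ge t\bigr)\le C\exp(-b^2t^2/4)$ uniformly in $N$. This uniform Gaussian tail immediately yields $\sup_N\renE\bigl[\exp(\lambda|\tau\cap[0,N]|/\sqrt N)\bigr]<\infty$ for every $\lambda>0$, after which your Step~3 goes through verbatim.
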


\begin{rem}
 Following Sohier's notation in lines (3.4) and (3.7) in his paper, in the 
 case $\alpha=\frac{1}{2}$ and $L(x)\sim C_K$, we have $a_n \sim 4\pi C_K^2 n^2$ and $b_n\sim\frac{1}{2\sqrt{\pi}C_K}\sqrt{n}$. 
 We note again that $\beta_c^{(K)}=0$ since $\sum_{n=1}^\infty K(n) =1$.      
 \end{rem}

\begin{rem}
 We note that in the case $K(n)=f_n=\frac{1}{\sqrt{2\pi}}n^{-3/2}$ we have $\beta_c^{(K)}=\beta_c$,
 the critical wetting model pinning strength, and for $K(n)=q(n)=e^{\beta_c} f_n$ we have $\beta_c^{(K)}=0$.
\end{rem}

\begin{cor}\label{cor:Sohier near crit set result holds with decreasing pinning} 
Fix a sequence $\epsilon_N$ so that $\epsilon_N\to 0$ as $N\to\infty$. Let $K(\cdot)=q(\cdot)$, as in 
Theorem \ref{thm:SohierNearCrit}. Then, under $\renP_{\frac{\epsilon_N}{\sqrt{N}},N}$ the rescaled contact set $\mathcal{A}_N$ 
is converging weakly to $\mathcal{A}_{1/2}$, the set of zeros in $[0,1]$ of a standard Brownian motion.
\end{cor}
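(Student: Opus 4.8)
The plan is to deduce Corollary \ref{cor:Sohier near crit set result holds with decreasing pinning} from Theorem \ref{thm:SohierNearCrit} by a diagonal/subsequence argument, exploiting that the limiting object in that theorem depends continuously (in fact, via an explicit exponential Radon--Nikodym derivative) on the parameter $\epsilon$, and that at $\epsilon = 0$ this derivative is identically $1$. Concretely, I would fix a bounded continuous $\Phi : \mathcal{F} \to \reals$ and aim to show $\renE_{\frac{\epsilon_N}{\sqrt N},N}[\Phi(\mathcal{A}_N)] \to \E[\Phi(\mathcal{A}_{1/2})]$.

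The first step is a monotonicity observation: for fixed $N$, the map $\beta \mapsto \renE_{\beta,N}[\Phi(\mathcal{A}_N)]$ need not be monotone for general $\Phi$, so instead I would sandwich. Given $\delta > 0$, pick the two constant parameters $\epsilon = \pm\delta$ (equivalently pinning strengths $\frac{b(\pm\delta)}{\sqrt N}$ in Sohier's normalization, or just $\frac{\pm\delta}{\sqrt N}$ after absorbing the constant $b$ into $\delta$, which changes nothing in the limit $\delta \to 0$). For $N$ large enough that $|\epsilon_N| \le \delta$, I would like to compare $\renP_{\frac{\epsilon_N}{\sqrt N},N}$ to $\renP_{\frac{\pm\delta}{\sqrt N},N}$. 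The clean way is to write, for any $\beta$,
\[
\renE_{\frac{\beta}{\sqrt N},N}[\Phi(\mathcal{A}_N)] = \frac{\renE\left[\exp\!\left(\beta\frac{|\tau \cap [0,N]|}{\sqrt N}\right)\Phi(\mathcal{A}_N)\right]}{\renE\left[\exp\!\left(\beta\frac{|\tau \cap [0,N]|}{\sqrt N}\right)\right]},
\]
and then, assuming WLOG $\Phi \ge 0$ and $\Phi \le \|\Phi\|_\infty$, bound the numerator for $\beta = \epsilon_N$ above and below by the corresponding expressions with $\beta = \delta$ and $\beta = -\delta$ respectively (using that $\epsilon_N \le \delta$ and $|\tau \cap [0,N]|/\sqrt N \ge 0$ for the upper bound, and $\epsilon_N \ge -\delta$ for the lower bound), and bound the denominator the other way. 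This yields
\[
\frac{\renZ_{\frac{-\delta}{\sqrt N},N}}{\renZ_{\frac{\delta}{\sqrt N},N}} \cdot \renE_{\frac{-\delta}{\sqrt N},N}[\Phi(\mathcal{A}_N)] \le \renE_{\frac{\epsilon_N}{\sqrt N},N}[\Phi(\mathcal{A}_N)] \le \frac{\renZ_{\frac{\delta}{\sqrt N},N}}{\renZ_{\frac{-\delta}{\sqrt N},N}} \cdot \renE_{\frac{\delta}{\sqrt N},N}[\Phi(\mathcal{A}_N)].
\]

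Now I would take $N \to \infty$ with $\delta$ fixed, applying \eqref{eq:sohier prob limit near crit} and \eqref{eq:sohier partition function limit near crit} (with the obvious rescaling of $\epsilon$ by the constant $b$): the ratio $\renZ_{\frac{\pm\delta}{\sqrt N},N}$ converges to $\E[\exp(\pm\tilde\delta L_1)]$ for the appropriately scaled $\tilde\delta$, and $\renE_{\frac{\pm\delta}{\sqrt N},N}[\Phi(\mathcal{A}_N)] \to \E[\exp(\pm\tilde\delta L_1)\Phi(\mathcal{A}_{1/2})]/\E[\exp(\pm\tilde\delta L_1)]$. Hence $\limsup_N$ and $\liminf_N$ of $\renE_{\frac{\epsilon_N}{\sqrt N},N}[\Phi(\mathcal{A}_N)]$ are trapped between $\E[\exp(-\tilde\delta L_1)\Phi(\mathcal{A}_{1/2})]/\E[\exp(\tilde\delta L_1)]$ and $\E[\exp(\tilde\delta L_1)\Phi(\mathcal{A}_{1/2})]/\E[\exp(-\tilde\delta L_1)]$. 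Finally I let $\delta \to 0$: since $L_1 < \infty$ a.s.\ and $\E[\exp(\tilde\delta L_1)] \to 1$ as $\tilde\delta \to 0$ (dominated convergence, using that $L_1$ has exponential moments — or just monotone convergence from below and an a priori bound), both bounds converge to $\E[\Phi(\mathcal{A}_{1/2})]$, giving the claim. Since $\Phi$ was an arbitrary bounded continuous functional, this is exactly weak convergence of $\mathcal{A}_N$ to $\mathcal{A}_{1/2}$ in the Matheron topology.

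The main obstacle, and the only genuinely non-routine point, is the interchange of limits: I need the convergence in Sohier's theorem to be uniform enough, or the $\delta$-family of limits to be continuous enough at $\delta = 0$, to push the sandwich through. This is handled by the explicit form of the Radon--Nikodym derivative $\exp(\tilde\delta L_1)/\E[\exp(\tilde\delta L_1)]$ together with finiteness of exponential moments of the Brownian local time $L_1$ (a standard fact), which makes $\tilde\delta \mapsto \E[\exp(\tilde\delta L_1)\Phi(\mathcal{A}_{1/2})]$ continuous at $0$. One should also double-check the reduction $\Phi \ge 0$: replace a general bounded continuous $\Phi$ by $\Phi + \|\Phi\|_\infty \ge 0$, prove the statement for that, and subtract off the constant (constants pass to the limit trivially since $\renZ/\renZ$-type prefactors tend to $1$). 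No delicate renewal estimates are needed beyond what Theorem \ref{thm:SohierNearCrit} already provides.
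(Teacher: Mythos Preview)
Your proposal is correct and follows essentially the same strategy as the paper's proof: sandwich the tilted expectation between fixed-parameter versions using monotonicity in the tilt (since $|\tau\cap[0,N]|\ge 0$), apply Sohier's Theorem~\ref{thm:SohierNearCrit} at the fixed parameter, and then let that parameter tend to $0$ using continuity of $\tilde\delta\mapsto \E[e^{\tilde\delta L_1}\Phi(\mathcal{A}_{1/2})]$ at $0$. The only organizational difference is that the paper first reduces to sequences $\epsilon_N$ of a single sign and then uses a one-sided sandwich (lower bound at $\epsilon=0$, upper bound at fixed $\epsilon>0$), treating the partition function and the numerator separately; your two-sided $\pm\delta$ sandwich handles both signs at once and is slightly more streamlined, but the content is the same.
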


\begin{proof}
By considering the positive and negative parts of $\epsilon_N$ we may assume WLOG
that they all have the same sign.
We consider the case that they are non-negative. The complementary case is similar. 
First, note that for every $\epsilon>0$ we have by \eqref{eq:sohier partition function limit near crit} that
\[
1\le
\limsup_{N\to\infty} \renZ_{\frac{b\epsilon_N}{\sqrt{N}},N} 
\le 
\lim_{N\to\infty}\renZ_{\frac{b\epsilon}{\sqrt{N}},N}
=
\E[\exp(\epsilon L_1)].
\]
Hence, 
\[
1\le 
\limsup_{N\to\infty} 
\renZ_{\frac{b\epsilon_N}{\sqrt{N}},N} \le 
\liminf_{\epsilon\to 0}
\E[\exp(\epsilon L_1)]
=1
\]
and so 
\begin{equation}\label{eq: prob limit near crit sequence}
\lim_{N\to\infty}\renZ_{\frac{b\epsilon_N}{\sqrt{N}},N}=1.
\end{equation}
More generally, let $\Phi:\mathcal{F}\to \reals$ be a measurable bounded function. 
Considering separately the positive and negative parts in the presentation $\Phi=\Phi_+-\Phi_-$ 
we can assume WLOG that $\Phi$ is non-negative.
For every $\epsilon>0$ we have by \eqref{eq:sohier prob limit near crit} that 
\[
\lim_{N\to\infty} \renE[\Phi(\mathcal{A}_N)]
\le
\limsup_{N\to\infty} \renE\left[\exp\left(b\epsilon\frac{|\tau\cap [0,N]|}{\sqrt{N}}\right)
\Phi(\mathcal{A}_N)\right]
=
\E[\exp(\epsilon L_1)\Phi(\mathcal{A}_{1/2})]. 
\]
Therefore,
\begin{eqnarray*}
\E[\Phi(\mathcal{A}_{1/2})] &=& \lim_{N\to\infty} \renE[\Phi(\mathcal{A}_N)]\\
&\le& \limsup_{N\to\infty} \renE\left[\exp\left(b\epsilon_N\frac{|\tau\cap [0,N]|}{\sqrt{N}}\right)
\Phi(\mathcal{A}_N)\right] \\
&\le&
\liminf_{\epsilon\to 0}\renE[\exp(\epsilon L_1)\Phi(\mathcal{A}_{1/2})]\\
&=& \E[\Phi(\mathcal{A}_{1/2})], 
\end{eqnarray*}
so that  
\begin{equation}\label{eq:partition function limit near crit sequence}
\lim_{N\to\infty} \renE\left[\exp\left(b\epsilon_N\frac{|\tau\cap [0,N]|}{\sqrt{N}}\right)\Phi(\mathcal{A}_{N})\right]=\E[\Phi(\mathcal{A}_{1/2})]. 
\end{equation}
The statement of the corollary follows.
\end{proof}

Define $\renP^c_{\beta,N}$ similarly to be the constrained version of $\renP_{\beta,N}$:
\[
  \renP^c_{\beta,N} (\tau)=\renP_{\beta,N} (\tau \cap [0,N]) := \frac{1}{\renZ^c_{\beta,N}} 
  \exp(\beta|\tau\cap[0,N]|)\one_{\{N\in\tau\}}.
\] 
One can write 
\[
  \renZ^c_{0,N} = \sum_{k=1}^N \renP_{0,N}(\tau_k=N)\renZ_{0,N} = 
  \renE_{0,N}(\one_{\{N\in\tau\}})\renZ_{0,N},
\] 
and so it holds
\[
  \renP^c_{0,N}(\cdot) = \frac{\renP_{0,N} (\cdot)}{\renP_{0,N} (N\in\tau)}
  = \renP_{0,N} (\cdot|N\in\tau)
\] 
(compare with Giacomin \cite[Remark 2.8]{giacomin2007random}).

The next proposition is an analog of Corollary \ref{cor:Sohier near crit set result holds with decreasing pinning} 
in the corresponding constraint case, and moreover for the near-critical \emph{standard wetting model}. 

\begin{prop}\label{prop:near crit standard wetting and renwal scaling limit}
Let $K(\cdot)=q(\cdot)$, as in Theorem \ref{thm:SohierNearCrit}. 
Fix a sequence $\epsilon_N$ so that $\epsilon_N\to 0$ as $N\to\infty$. 
The rescaled contact set $\mathcal{A}_N\subset[0,1]$ distributed according to 
$\Pset^f_{\beta_c+\frac{\epsilon_N}{\sqrt{N}},N}$,
is converging weakly to $\mathcal{A}_{1/2}$, the set of zeros in $[0,1]$ of a standard Brownian motion.
Moreover, when distributed according to either $\renP^c_{\frac{\epsilon_N}{\sqrt{N}},N}$ or $\Pset^c_{\beta_c+\frac{\epsilon_N}{\sqrt{N}},N}$,
$\mathcal{A}_N$ is converging weakly to $\mathcal{A}^c_{1/2}$,
the set of zeros of the Brownian bridge in $[0,1]$.
Here $\renP^c_{\frac{\epsilon_N}{\sqrt{N}},N}$ is corresponding to $K$ with the same conditions as in 
Theorem \ref{thm:SohierNearCrit}, and, as before, all sets are considered in in the Matheron topology on closed subsets of the real line.
\end{prop}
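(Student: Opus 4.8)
The plan is to reduce the three claims to Corollary~\ref{cor:Sohier near crit set result holds with decreasing pinning} plus one genuinely new ingredient: the scaling limit of the \emph{constrained} critical renewal contact set. The starting point is the excursion/renewal decomposition of the standard wetting model (\cite{deuschel2005scaling}; cf.\ the computation in the proof of Lemma~\ref{lem:bounding gen fcn of varphi wetting in terms of standard}). Writing $A=\{0=t_0<t_1<\dots<t_k\}$ and $q(n):=e^{\beta_c}f_n$, which is a probability mass function by \eqref{eq:def of critical beta}, and using $e^{\beta k}\prod_i f_{t_i-t_{i-1}}=e^{(\beta-\beta_c)k}\prod_i q_{t_i-t_{i-1}}$, one has
\[
\Pset^c_{\beta,N}(A)\propto e^{(\beta-\beta_c)k}\prod_{i=1}^k q_{t_i-t_{i-1}}\quad(t_k=N),\qquad
\Pset^f_{\beta,N}(A)\propto e^{(\beta-\beta_c)k}\Big(\prod_{i=1}^k q_{t_i-t_{i-1}}\Big)P(N-t_k),
\]
where $P(m)=\pr_0[S_1>0,\dots,S_m>0]$ is the free last-excursion weight (the function $P^a_y$ of \cite{sohier2015scaling} at $a=y=0$). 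Comparing with the renewal measures built on $q$, namely $\renP^c_{\gamma,N}(A)\propto e^{\gamma k}\prod_i q_{t_i-t_{i-1}}\one_{\{t_k=N\}}$ and $\renP_{\gamma,N}(A)\propto e^{\gamma k}(\prod_i q_{t_i-t_{i-1}})\,\overline q(N-t_k)$ with $\overline q(m):=\sum_{n>m}q(n)$, we read off the exact identity $\Pset^c_{\beta_c+\frac{\epsilon_N}{\sqrt N},N}=\renP^c_{\frac{\epsilon_N}{\sqrt N},N}$—so the third claim is the second—while $\Pset^f_{\beta,N}$ differs from $\renP_{\beta-\beta_c,N}$ only through the last-excursion factor $P$ versus $\overline q$.

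For the first claim set $\gamma_N:=\epsilon_N/\sqrt N$. On contact sets, $\frac{d\Pset^f_{\beta_c+\gamma_N,N}}{d\renP_{\gamma_N,N}}(A)=c_N\,g\!\left(N(1-\max\mathcal{A}_N)\right)$, where $g(m):=P(m)/\overline q(m)$ and $c_N$ is a normalizing constant. Since $P(m)\sim\frac1{\sqrt{2\pi}}m^{-1/2}$ and $q(n)\sim\frac{e^{\beta_c}}{\sqrt{2\pi}}n^{-3/2}$ (so $\overline q(m)\sim\frac{2e^{\beta_c}}{\sqrt{2\pi}}m^{-1/2}$), $g$ is bounded above and below by positive constants and $g(m)\to\gamma:=(2e^{\beta_c})^{-1}$. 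By Corollary~\ref{cor:Sohier near crit set result holds with decreasing pinning}, $\mathcal{A}_N\Rightarrow\mathcal{A}_{1/2}$ under $\renP_{\gamma_N,N}$; as $\max(\cdot)$ is continuous on the Matheron space and the last zero of Brownian motion before time $1$ is a.s.\ $<1$, we get $N(1-\max\mathcal{A}_N)\to\infty$ in probability, hence $g(N(1-\max\mathcal{A}_N))\to\gamma$ in probability. Therefore, for every bounded continuous $\Phi$,
\[
\Eset^f_{\beta_c+\gamma_N,N}[\Phi(\mathcal{A}_N)]=\frac{\renE_{\gamma_N,N}\!\left[\Phi(\mathcal{A}_N)\,g(N(1-\max\mathcal{A}_N))\right]}{\renE_{\gamma_N,N}\!\left[g(N(1-\max\mathcal{A}_N))\right]}\longrightarrow\frac{\gamma\,\E[\Phi(\mathcal{A}_{1/2})]}{\gamma}=\E[\Phi(\mathcal{A}_{1/2})]
\]
by bounded convergence, which is the first claim.

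For the second claim, first take $\gamma=0$: then $\renP^c_{0,N}$ is the plain renewal $\renP$ conditioned on $N\in\tau$, and I claim $\renP^c_{0,N}\Rightarrow\mathcal{A}^c_{1/2}$. Decomposing the free critical renewal over its last point $M_N:=\max(\tau\cap[0,N])$ and using that, given $M_N=m$, $\tau\cap[0,m]$ has law $\renP^c_{0,m}$, one gets $\renE_{0,N}[\Phi(\mathcal{A}_N)]=\sum_{m=0}^N u_m\,\overline q(N-m)\,\renE^c_{0,m}[\Phi(\tfrac mN\mathcal{A}_m)]$, where $u_m:=\renP(m\in\tau)\sim c_u m^{-1/2}$ by the renewal theorem and $\mathcal{A}_m\subset[0,1]$ is the $\renP^c_{0,m}$-distributed contact set rescaled by $m$. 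The space of closed subsets of $[0,1]$ is Matheron-compact, so along a subsequence $\renP^c_{0,m}\Rightarrow\mu$ for a law $\mu$ carried by closed sets containing $0$ and $1$; since $N u_{\lfloor xN\rfloor}\overline q(N-\lfloor xN\rfloor)$ converges to a multiple of $(x(1-x))^{-1/2}$ and the total mass of the sum is $\renZ_{0,N}=1$, passing to the limit gives $\E[\Phi(\mathcal{A}_{1/2})]=\int_0^1\frac{dx}{\pi\sqrt{x(1-x)}}\int\Phi(xF)\,\mu(dF)$. But the Brownian zero set decomposes over its last zero $g_1$ (arcsine law, and conditionally on $g_1$ it equals $g_1$ times a Brownian-bridge zero set), which yields the same identity with $\mu$ replaced by the law of $\mathcal{A}^c_{1/2}$; as both sides are continuous in $x$ and the weight is strictly positive, $\mu$ must be the law of $\mathcal{A}^c_{1/2}$, and since every subsequential limit coincides, $\renP^c_{0,N}\Rightarrow\mathcal{A}^c_{1/2}$. (Alternatively this is the known contact-set limit of the constrained critical wetting model, \cite{deuschel2005scaling,caravenna2006sharp}, via the identity above.) Finally, $\renP^c_{\gamma_N,N}$ is $\renP^c_{0,N}$ tilted by $\exp(\gamma_N|\tau\cap[0,N]|)/\renE^c_{0,N}[\exp(\gamma_N|\tau\cap[0,N]|)]$; using the constrained analogues of Sohier's limits \eqref{eq:sohier prob limit near crit}--\eqref{eq:sohier partition function limit near crit}—with the Brownian bridge local time at $0$ in place of $L_1$, obtained from the same last-point decomposition and the free statements, or from Sohier's method applied to bridges—the sandwiching argument used to prove Corollary~\ref{cor:Sohier near crit set result holds with decreasing pinning} applies verbatim and gives $\renP^c_{\gamma_N,N}\Rightarrow\mathcal{A}^c_{1/2}$. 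With the identity of the first paragraph this also gives the third claim.

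The main obstacle is the constrained critical renewal limit: because $\renP(N\in\tau)$ decays like $N^{-1/2}$, weak convergence of the free contact set does not by itself suffice, and one needs the sharp renewal asymptotics $u_m\sim c_u m^{-1/2}$ together with the identification of the limiting weight (the arcsine density) and of the limit set (the Brownian-bridge zero set). The decomposition argument achieves this from the free critical limit and compactness alone, but the bookkeeping—continuity of the scaling map $(x,F)\mapsto xF$ on the Matheron space and the passage to the Riemann-sum limit with its integrable singularities at $x\in\{0,1\}$—needs care; the clean shortcut is to cite the constrained critical wetting result from \cite{deuschel2005scaling,caravenna2006sharp}. Everything else (the renewal identity for the constrained model, the Radon--Nikodym comparison for the free model, and the near-critical tilt) is routine once this ingredient and its first-moment/local-time refinement are in hand.
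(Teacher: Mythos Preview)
Your free-boundary argument is essentially the paper's: both compute the Radon--Nikodym derivative of $\Pset^f_{\beta_c+\gamma_N,N}$ against a free renewal measure, observe that it depends only on $\max\mathcal{A}_N$ through the ratio $P(\cdot)/\overline q(\cdot)$, and use that this ratio is bounded and has a limit together with Corollary~\ref{cor:Sohier near crit set result holds with decreasing pinning}. The identity $\Pset^c_{\beta_c+\gamma,N}=\renP^c_{\gamma,N}$ is also the paper's first step in the constrained case.

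The divergence is in the constrained near-critical step. You propose to mimic the sandwiching of Corollary~\ref{cor:Sohier near crit set result holds with decreasing pinning}, which requires a \emph{constrained} analogue of Sohier's limits \eqref{eq:sohier prob limit near crit}--\eqref{eq:sohier partition function limit near crit}, i.e.\ convergence of $\renE^c_{0,N}[\exp(\tfrac{b\epsilon}{\sqrt N}|\tau\cap[0,N]|)\Phi(\mathcal{A}_N)]$ for fixed $\epsilon$, with the bridge local time replacing $L_1$. That statement is not available in \cite{sohier2009finite} (which treats only the free case), and your suggestion to derive it from the free case via the last-point decomposition does not go through cleanly: in the decomposition $\renE[\exp(\tfrac{b\epsilon}{\sqrt N}|\tau\cap[0,N]|)\Phi]=\sum_m(\cdots)$ the tilt parameter is $b\epsilon/\sqrt N$, not $b\epsilon/\sqrt m$, so the summands are not constrained-$m$ Sohier quantities, and the Riemann-sum passage with the singular weight $u_m\overline q(N-m)$ then has to be justified \emph{together} with this mismatched tilt. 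This is the genuine gap in your outline.

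The paper sidesteps the need for a constrained Sohier theorem. Following \cite[Prop.~5.2]{caravenna2006sharp} it restricts to $\mathcal{A}_N\cap[0,\tfrac12]$ and writes $\renE^c_{\gamma_N,N}[\Phi(\mathcal{A}_N\cap[0,\tfrac12])]$ as a $\renE$-expectation against $\exp(\gamma_N|\mathcal{A}_N\cap[0,\tfrac12]|)\,\phi^c_N(\max(\mathcal{A}_N\cap[0,\tfrac12]))$, where $\phi^c_N$ is built from $q$, $\overline q$ and the constrained partition functions $\renZ^c_{\gamma_\cdot,\cdot}$. The only new ingredient beyond the critical computation of \cite{caravenna2006sharp} is that $\renZ^c_{\gamma_N,N}/\renZ^c_{0,N}\to 1$. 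For this the paper uses the uniform bound $\renE\big[\exp(\tfrac{\epsilon}{\sqrt N}|\tau\cap[0,N]|)\,\big|\,N\in\tau\big]\le C$ from \cite[(A.12)]{derrida2009fractional} together with the elementary Lemma~\ref{lem:enough to bound}. So the paper needs only (i) the free Sohier theorem, (ii) the constrained \emph{critical} contact-set limit from \cite{caravenna2006sharp}, and (iii) a single exponential-moment bound under the bridge conditioning; it never appeals to a bridge version of Sohier's result. If you want to salvage your route, note that the same two inputs (ii)--(iii) already suffice: once $\renE^c_{0,N}[\exp(\gamma_N|\tau\cap[0,N]|)]\to 1$, the bound $|\renE^c_{0,N}[e^{\gamma_N|\tau\cap[0,N]|}\Phi]-\renE^c_{0,N}[\Phi]|\le\|\Phi\|_\infty\,|\renE^c_{0,N}[e^{\gamma_N|\tau\cap[0,N]|}]-1|\to 0$ and the critical constrained limit finish the job without ever invoking a fixed-$\epsilon$ constrained Sohier statement.
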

For the proof we shall essentially imitate the way Proposition 5.2.\ of \cite{caravenna2006sharp} was deduced from Lemma 5.3 of that paper (which is partly based on \cite{deuschel2005scaling}), while performing the necessary changes. 
In light of equations \eqref{eq: prob limit near crit sequence} and \eqref{eq:partition function limit near crit sequence} the free case is almost the same as in \cite{caravenna2006sharp}. 
In the constraint cases we will borrow an estimate from \cite{derrida2009fractional}.  
\begin{proof}
First, for the free case, let $A=\{t_1,...,t_{|A|}\}$ so that $0=:t_0<t_1<...<t_{|A|}\le N$.
Note that $\Z_{0,N}=1$ for all $N$ (see \cite[equation (2.17)]{giacomin2007random}), so 
$\renP_{0,N}(A)=\renP(A)$.
Now
\[
\renP(A)=\prod_{j=1}^{|A|}q(t_j-t_{j-1})Q(N-t_{|A|})
\]
where $Q(n)=\bar{K}(n+1)=\sum_{t\ge n+1}q(t)$. 
Also
\[
\Pset^f_{\beta,N}(A)=
\frac{1}{Z^f_{\beta,N}}
e^{(\beta-\beta_c)|A|}P(N-t_{|A|})\prod_{j=1}^{|A|}q(t_j-t_{j-1}),
\]
where as before $P(n)=P^0(n):=\pr[S_1>0,...,S_n>0]$.
We then have for $\beta_N = \beta_c+\frac{\epsilon_N}{\sqrt{N}}$
\[
 \frac{\Pset^f_{\beta_N,N}(A)}{\renP(A)}=\exp\left(\frac{\epsilon_N}{\sqrt{N}}|A|\right)\phi_N(\max A),
\]
where $\phi_N:[0,1]\to \reals_+$ is defined by
\[
 \phi_N(t):=\frac{1}{Z^f_{\beta_c,N}}\frac{P(N((1-t))}{Q(N(1-t))}.
\]
Therefore for every bounded measurable functional $\Phi$ we have
\[
 \Eset^f_{\beta_N,N}[\Phi(\mathcal{A}_N)]=
 \renE\left[\exp\left(\frac{\epsilon_N}{\sqrt{N}}|\mathcal{A}_N|\right) \phi_N(\max A)\Phi(\mathcal{A}_N)\right],
\]
It was proved in \cite[proof of Proposition 5.2.]{caravenna2006sharp} that 
$\phi_N(t)\to 1$ uniformly in $t\in[0,v]$, for every $v\in(0,1)$.
Since $\pr$-a.s.\ $0\notin\mathcal{A}_{1/2}$, it follows from \eqref{eq:partition function limit near crit sequence} (for general $\epsilon_N\to 0$)  that   
\[
\renE\left[\exp\left(\frac{\epsilon_N}{\sqrt{N}}|\mathcal{A}_N|\right) \phi_N(\max A)\Phi(\mathcal{A}_N)\right]\to \E[\Phi(\mathcal{A}_{1/2})],
\]
and the free case is done.
We will now show the constraint case. 
By definition, for every $A\subset \{1,...,N\}$ containing $N$ we have 
\[
\frac{\renP^c_{\beta,N}(A)}
{\Pset^c_{\beta_c+\beta,N}(A)}=
\frac{Z^c_{\beta_c+\beta,N}}
{\renZ^c_{\beta_c,N}}.
\]
That is, the ratio of these to probability measures is constant and so they coincide. 
We shall work with $\renP^c_{\frac{\epsilon_N}{\sqrt{N}},N}$.
As in the free case we follow the proof of \cite[Proposition 5.2.]{caravenna2006sharp}, and accordingly  
we now consider $\mathcal{A}_N\cap[0,1/2]$. We have for $\beta_N = \beta_c+\frac{\epsilon_N}{\sqrt{N}}$
\[
 \renE^c_{\beta_N,N}[\Phi(\mathcal{A}_N\cap[0,1/2])]=
 \renE\left[\exp\left(\frac{\epsilon_N}{\sqrt{N}}|\mathcal{A}_{N}\cap[0,1/2]|\right) \phi_N^c(\max \mathcal{A}_{N}\cap[0,1/2])\Phi(\mathcal{A}_N\cap[0,1/2])\right],
\]
where 
\[
\phi_N^c(t):=\frac{\sum_{n=0}^{N/2}\renZ^c_{\frac{\epsilon_n}{\sqrt{n}},n}q(N(1-t)-n)}
{\renZ^c_{\frac{\epsilon_N}{\sqrt{N}},N}Q(N(1-t))}, \text{   } t\in[0,1/2]. 
\]
We remind the reader that here $\renZ^c_{\beta,N}$ is the partition function corresponding to 
$\renP^c_{\beta,N}$.
Now, since $\phi_N^c(t)$ is defined similarly to $f_N^c(t)$ in the proof of \cite[Proposition 5.2.]{caravenna2006sharp},
with the only difference being that all the $\renZ^c_{\frac{\epsilon_k}{\sqrt{k}},k}$ are replaced by the corresponding $\renZ^c_{0,k}$,
and since that proof uses only the asymptotic rates of $\renZ^c_{0,\cdot}$,$q(\cdot)$ and $Q(\cdot)$,  
we are done once we show that 
\begin{equation}\label{eq:constraint partition near crit goes to crit}
 \frac{\renZ^c_{\frac{\epsilon_N}{\sqrt{N}},N}}{\renZ^c_{0,N}}\to 1 \text{ as } N\to\infty.
\end{equation}
By a direct expansion, one finds that
$\renZ^c_{\frac{\epsilon_N}{\sqrt{N}},N}=
\renZ^c_{0,N}\renE^c_{0,N}\left[\exp\left(\frac{\epsilon_N}{\sqrt{N}}|\tau\cap[0,N]|\right)\right]$.
Therefore,
\[
\frac{\renZ^c_{\frac{\epsilon_N}{\sqrt{N}},N}}{\renZ^c_{0,N}}= 
\renE^c_{0,N}\left[\exp\left(\frac{\epsilon_N}{\sqrt{N}}|\tau\cap[0,N]|\right)\right]=
\renE\left[\exp\left(\frac{\epsilon_N}{\sqrt{N}}|\tau\cap[0,N]|\right)|N\in\tau\right].
\]
Assume WLOG that $\epsilon_N\ge0$ for all $N$ and fix $\epsilon>0$. Since for large $N$ 
the right most expression in last line is smaller than   
$\renE\left[\exp\left(\frac{\epsilon}{\sqrt{N}}|\tau\cap[0,N]|\right)|N\in\tau\right]$,
by \cite[equation (A.12)]{derrida2009fractional} (cf.\ \cite{toninelli2009coarse}, and \cite[Lemma A.2]{giacomin2010marginal}), 
there is a constant $C>0$ bounding the expression.
Using Lemma \ref{lem:enough to bound} we deduce that the expression is in 
fact converging to $1$ as $N\to\infty$, and so we have \eqref{eq:constraint partition near crit goes to crit}. 
We therefore conclude the proof of the proposition.
\end{proof}

\section{Contact set scaling limit - proof of Theorem \ref{thm:main thm general zero set limit} }\label{sec:pf of scaling contact set}

First, we note that for $a_N=\epsilon_N/\sqrt{N}$, for $s,r\in\reals$, and positive functions $C(a)$ 
converging to $1$ as $a\to0$ we have by \eqref{eq:constraint partition near crit goes to crit} that
\[
\frac{Z^c_{\beta_c+r a_N,N}}{Z^c_{\beta_c+s a_N,N}C(a_N)}\to 1.
\]
Moreover, by \eqref{eq: prob limit near crit sequence} we have
\[
Z^f_{\beta_c,N}\to 1 \text{ and } Z^f_{\beta_c+r a_N,N}\to 1 \text{ as } a\to 0.
\]
Next, using Proposition \ref{prop:near crit standard wetting and renwal scaling limit}
with $r \epsilon_N$ instead of 
$\epsilon_N$ we have the desired corresponding scaling limit under $\Pset^\alpha_{\beta_c+r a_N}$.
Using Lemma \ref{lem:derivative varphi wetting to near crit standard} we can now conclude.
Indeed, let $\Phi:\mathcal{F}\to \reals$ be a measurable bounded function. 
As before, considering separately the positive and negative parts in the presentation $\Phi=\Phi_+-\Phi_-$ 
we can assume WLOG that $\Phi$ is non-negative.
We therefore have by Lemma \ref{lem:derivative varphi wetting to near crit standard}
\[
 \EQset^\alpha_{ \varphi_a,N}[\Phi(\mathcal{A}_N)] \le 
 R_N \Eset^\alpha_{\beta_c +c_3a_N,N}[\Phi(\mathcal{A}_N)]\to 
 \E[\Phi(\mathcal{A}^\alpha_{1/2})]   
\]
and
\[
 \EQset^\alpha_{ \varphi_a,N}[\Phi(\mathcal{A}_N)] \ge 
 L_N \Eset^\alpha_{\beta_c - c_ 6 a_N,N}[\Phi(\mathcal{A}_N)]\to 
 \E[\Phi(\mathcal{A}^\alpha_{1/2})],   
\]
where $L_N, R_N$ are positive constants so that $L_N,R_N\to 1 $.

\section{Path scaling limit - proof of Theorem \ref{thm:main thm general path limit}}\label{sec:Path scaling limit}
In this section we shall prove Theorem \ref{thm:main thm general path limit}.
Once we have the contact set convergence, Theorem \ref{thm:main thm general zero set limit}, to move 
to the path limit is by now routine, following the guidelines of \cite{deuschel2005scaling}.
We shall highlight the necessary modifications. Let us first give a rough sketch.

Tightness will be proved as in \cite[Lemma 4]{deuschel2005scaling} where we need a small linear 
modification of the oscillation function, and instead of using Propositions 7 and 8 of that paper, 
we shall use stronger results as follows.   
The first result is the weak convergence in $C[0,1]$ under $\Pset^c_{0,N}(x_N,y_N)$ 
the pinning-free process (i.e.\ $\varphi_{a}=0$) conditioned on the starting and 
ending point $x_N,y_N\in[0,a_N]$ to the Brownian bridge, which was proved by Caravenna-Chaumont \cite{caravenna2013invariance}.
The second result is the analogous statement on the free case and the Brownian meander which is available by 
Caravenna-Chaumont \cite{caravenna2008invariance}. 

Once we have tightness, we need to prove the finite dimensional distributions, for that we follow \cite[Chapter 8]{deuschel2005scaling}.
Since we know that out contact set converges to the zero-set of the Brownian motion or bridge, then we know that 
the probability that a fixed finite number of points in $[0,1]$ are the limiting zero-set is $0$, and there is no 
change of that part of the argument. The only difference in the proof is that we condition not only on the contact indices 
but also on \emph{their location in the strip}. But since the conditioned processes converge 
by the last two cited theorems, we can conclude using dominated convergence on the full path as in \cite{deuschel2005scaling}.

Let $A_n^a(y):=\{S_1>a,...,S_{n-1}>a,S_n=y\}$. We have the following densities comparison bound. 
\begin{lem}\label{lem:osillation comparison}
 For every $\gamma>0$ and $n\in\N$, we have
 \begin{equation}
  \pr_x\left( \max_{0\le i,j\le n}|S_i-S_j|>\gamma ,{A_n^a(y)}\right)
  \le
  \pr_0\left( \max_{0\le i,j\le n}|S_i-S_j|>\gamma -a ,{A_n^0(0)}\right)
 \end{equation}
uniformly in $x,y\in[0,a]$. 
Moreover, the same holds whenever in both sides of the inequality 
the index set satisfies in addition that 
$|i-j|\le \delta n$ for some fixed $\delta>0$.
\end{lem}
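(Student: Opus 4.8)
The plan is to write both sides as explicit integrals against the increment density $\rho$ (using the density-on-events convention \eqref{eq:density on events}), and then perform the shift $s_i\mapsto s_i-a$ of the integration variables that turns the strip constraint $S_i>a$ into $S_i>0$. For $n\ge 2$ the left-hand side is
\[
\int_{a}^{\infty}\!\!\!\cdots\!\!\int_{a}^{\infty} \one\!\left\{\mathrm{osc}(x,s_1,\dots,s_{n-1},y)>\gamma\right\}\rho(s_1-x)\rho(s_2-s_1)\cdots\rho(y-s_{n-1})\,\dif s_1\cdots\dif s_{n-1},
\]
where $\mathrm{osc}$ denotes $\max-\min$ of the displayed finite sequence; substituting $s_i=t_i+a$ makes each $t_i$ range over $(0,\infty)$ and rewrites the first and last increments as $\rho(t_1-(x-a))$ and $\rho((y-a)-t_{n-1})$, the interior increments $\rho(t_{i+1}-t_i)$ being unchanged.

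Two elementary observations then finish the job. First, the sequence $(x,t_1+a,\dots,t_{n-1}+a,y)$ differs from $(x-a,t_1,\dots,t_{n-1},y-a)$ by the global additive constant $a$, so the two have the same oscillation; and since $t_i>0$ while $x-a,\,y-a\in[-a,0]$, one has $\mathrm{osc}(x-a,t_1,\dots,t_{n-1},y-a)=\max_i t_i-\min(x-a,y-a)\le\max_i t_i+a$, whereas $\mathrm{osc}(0,t_1,\dots,t_{n-1},0)=\max_i t_i$; hence $\one\{\mathrm{osc}>\gamma\}\le\one\{\mathrm{osc}(0,t_1,\dots,t_{n-1},0)>\gamma-a\}$. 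Second, since $V$ is symmetric and strictly convex we have $V'(0)=0$, so $\rho$ is nonincreasing on $[0,\infty)$; as $t_1,t_{n-1}>0$ and $x-a,y-a\le 0$, this gives $\rho(t_1-(x-a))\le\rho(t_1)$ and $\rho((y-a)-t_{n-1})\le\rho(-t_{n-1})=\rho(t_{n-1})$. Plugging both bounds into the integral yields exactly $\pr_0(\mathrm{osc}>\gamma-a,\,A_n^0(0))$, the right-hand side. The case $n=1$, where $A_1^a(y)=\{S_1=y\}$ and there are no interior variables, is checked directly: the left-hand side is $\one\{|x-y|>\gamma\}\rho(y-x)$, which vanishes when $\gamma\ge a$ (as $|x-y|\le a$) and is otherwise at most $\rho(0)$, matching the right-hand side.

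For the ``moreover'' part the same shift works verbatim, and only the oscillation comparison must be rechecked with the index set restricted to $|i-j|\le\delta n$. I would argue by cases on the maximizing pair $(i^*,j^*)$ for $(x-a,t_1,\dots,t_{n-1},y-a)$: if neither index is $0$ or $n$, the corresponding values are the same in $(0,t_1,\dots,t_{n-1},0)$ and the inequality is immediate; if exactly one, say $i^*=0$, is an endpoint, then $|(x-a)-t_{j^*}|=t_{j^*}+(a-x)\le|0-t_{j^*}|+a$ and the pair $(0,j^*)$ remains admissible since $|i^*-j^*|$ is unchanged, so the restricted oscillation of $(0,\dots,0)$ exceeds $\gamma-a$; and if both indices are endpoints, the quantity is $|(x-a)-(y-a)|=|x-y|\le a$, so $\{\mathrm{osc}>\gamma\}$ forces $\gamma<a$, whence $\gamma-a<0$ and $\{\mathrm{osc}(0,\dots,0)>\gamma-a\}$ holds trivially.

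I do not anticipate a real obstacle: the argument is bookkeeping around the shift $s\mapsto s-a$ combined with translation-invariance of the oscillation and monotonicity of $\rho$. The one place needing care is precisely the restricted (``$|i-j|\le\delta n$'') oscillation comparison, where one must confirm that replacing the endpoints by $0$ decreases each relevant difference by at most $a$ while keeping admissible index pairs admissible; isolating the sub-case in which both extremal indices are endpoints — which can inflate the oscillation only by $|x-y|\le a$, forcing $\gamma<a$ — is what closes that gap cleanly.
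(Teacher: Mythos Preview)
Your proof is correct and follows essentially the same route as the paper: write the density explicitly, shift the interior variables by $a$ to turn the constraint $s_i>a$ into $s_i>0$, then use monotonicity of $\rho$ on $[0,\infty)$ for the two boundary increments and the elementary comparison $\mathrm{osc}(x-a,t_1,\dots,t_{n-1},y-a)\le \mathrm{osc}(0,t_1,\dots,t_{n-1},0)+a$. Your treatment of the ``moreover'' part is in fact more detailed than the paper's, which simply declares it similar and omits the case analysis you carry out.
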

\begin{proof}
Let $a-x=S_0,S_1,...,S_n=a-y$ so that $S_i\ge 0$, $i=1,...,n-1$, and 
$|S_{i_0}-S_{j_0}|= \max_{0\le i,j\le n}|S_i-S_j|$. 
Then, if $i_0,j_0\notin \{1,...,n-1\}$, WLOG $i_0=0$,  and so 
$|S_{i_0}-S_{j_0}|= |S_{j_0}-(a-x)|
\le |S_{j_0}| + |a-x| 
\le |S_{j_0} - 0| + a$.
In other words, $\max_{0\le i,j\le n}|S_i-S_j|\le \max_{0\le i,j\le n}|S_i'-S_j'| + a$ where 
$S_i'=S_i$ for $i=1,...,n-1$ but $S_0=S_n=0$. Therefore,
\begin{eqnarray*}
{\pr_x\left( \max_{0\le i,j\le n}|S_i-S_j|>\gamma ,{A_n^a(y)}\right)}
  &=&
  \frac{1}{\kappa^n}
\int_a^{\infty}...\int_a^{\infty} \one_{\max_{i,j\le n}|S_i-S_j|>\gamma}\times\\
 &\times& \rho(s_1-x)\rho(s_2-s_1)\cdots \rho(s_{n-1}-s_{n-2})\rho(y-s_{n-1})\dif s_1\cdots\dif s_{n-1}\\ 
  &=&
  \frac{1}{\kappa^n}
\int_0^{\infty}...\int_0^{\infty} \one_{\max_{0\le i,j\le n}|S_i-S_j|>\gamma}\times\\
 &\times& \rho(s_1-x+a)\rho(s_2-s_1)\cdots \rho(s_{n-1}-s_{n-2})\rho(y-s_{n-1}-a)\dif s_1\cdots\dif s_{n-1}\\ 
 &\le&  
   \frac{1}{\kappa^n}
\int_0^{\infty}...\int_0^{\infty} \one_{\max_{0\le i,j\le n}|S_i-S_j|>\gamma-a}\times\\
 &\times& \rho(s_1)\rho(s_2-s_1)\cdots \rho(s_{n-1}-s_{n-2})\rho(s_{n-1})\dif s_1\cdots\dif s_{n-1}\\ 
  &=& \pr_0\left( \max_{0 \le i,j\le n}|S_i-S_j|>\gamma -a ,{A_n^0(0)}\right).
\end{eqnarray*}
The `moreover' part is similar, we omit its proof. 
\end{proof}

We shall now prove that whenever $\varphi=\varphi_{a_N}^0$, i.e. no pinning is present, the scaling limit is 
a Brownian excursion, for any fixed endpoints $x_N,y_N\in[0,a_N]$. Shifting by $a_N$, it is 
equivalent to show that conditioning on starting and ending at $S_0=x_N-a_N,S_N=y_N-a_N$ and $S_n$ non-negative at 
times $1\le n\le N-1$, the rescaled path converges weakly to the Brownian excursion.

The following is a formulation of Theorem 1.1 of Caravenna-Chaumont \cite{caravenna2013invariance} which shows
the same for non-negative endpoints which are $o(\sqrt{N})$ away from the zero line. 
Our modification will follow by comparison tightness and finite dimensional distributions with Caravenna-Chaumont.

Let us first introduce a notation for the conditioning. Define 
\[
\pr_{x,y}^{+,N}:=\pr_x(\cdot|\mathcal{C}_{N-1}, S_N=y), 
\]
for any $x,y\in\reals$, $N\in\N$.
\begin{thm}[Caravenna-Chaumont \cite{caravenna2013invariance}]\label{thm:Caravenna-Chaumont conditioned bridges}
 Let $(x_N),(y_N)$ be sequences of non-negative real numbers such that $x_N,y_N=o(\sqrt{N})$ as $N \to \infty$. 
 Then under $\pr_{x_N,y_N}^{+,N}$, $(X^{(N)}_t)_{t\in[0,1]}$
 converges weakly in $C[0,1]$ to the Brownian excursion.  
\end{thm}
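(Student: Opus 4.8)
The plan is the standard two-step scheme for an invariance principle: establish tightness of $\{X^{(N)}\}_N$ in $C[0,1]$, establish convergence of the finite-dimensional marginals to those of the normalized Brownian excursion, and conclude by the usual compactness argument. I describe the two steps and then the main obstacle.

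\emph{Finite-dimensional distributions.} Fix $0<s_1<\dots<s_k<1$ and set $n_j=\lfloor Ns_j\rfloor$, $n_0=0$, $n_{k+1}=N$, $z_0=x_N$, $z_{k+1}=y_N$. Writing $f^0_m(\cdot,\cdot)$ for the positive-walk kernel of \eqref{eq:def of f n a x y} with $a=0$, the Markov property gives that under $\pr^{+,N}_{x_N,y_N}$ the joint density of $(S_{n_1},\dots,S_{n_k})$ at $(z_1,\dots,z_k)\in\reals_+^k$ equals
\[
\frac{1}{f^0_N(x_N,y_N)}\prod_{j=0}^{k} f^0_{n_{j+1}-n_j}(z_j,z_{j+1}).
\]
The key input is the local limit theorem for the walk conditioned to stay positive: there is a renewal function $V\ge 0$ (of the strict descending ladder heights, with $V(0+)>0$ and $V(t)/t\to c>0$ as $t\to\infty$) such that, in the diffusive window $y=w\sqrt m$, $f^0_m(x,y)$ has sharp asymptotics whose leading behaviour is the factor $V(x)$ near the starting boundary, a $w\,e^{-w^2/2}$-type profile in the bulk, and the factor $V(y)$ near the ending boundary; the crude polynomial rates behind these asymptotics are exactly the Ballot-type bounds already recorded in Section \ref{sec:comparing kernels} (the displays around \eqref{eq:Ballot interpretation} and \eqref{eq:Ballot's corollary interpretation}) together with the uniform local CLT bound $\pr_x[S_m=y]\le C/\sqrt m$. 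Inserting these into the product above, rescaling $z_j=w_j\sqrt N$, and using $x_N,y_N=o(\sqrt N)$ — so that $V(x_N)$, $V(y_N)$ and the normalization $f^0_N(x_N,y_N)\asymp V(x_N)V(y_N)N^{-3/2}$ cancel and the entrance/exit factors degenerate to the excursion entrance law $\propto w_1 e^{-w_1^2/(2s_1)}$ and its time reversal — one recovers precisely the finite-dimensional densities of the normalized Brownian excursion, i.e.\ a product of excursion transition densities with the correct entrance and exit laws at $s_1$ and $s_k$.

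\emph{Tightness.} By reversibility of $\rho$, the time reversal $S_\cdot\mapsto S_{N-\cdot}$ maps $\pr^{+,N}_{x_N,y_N}$ to $\pr^{+,N}_{y_N,x_N}$, so it suffices to control the modulus of continuity of $X^{(N)}$ on $[0,1/2]$. Restricted to the first $\lfloor N/2\rfloor$ steps, $\pr^{+,N}_{x_N,y_N}$ is absolutely continuous with respect to the law of the walk conditioned only to stay positive up to time $\lfloor N/2\rfloor$ (the random-walk meander), the Radon--Nikodym density being $f^0_{N-\lfloor N/2\rfloor}(S_{\lfloor N/2\rfloor},y_N)$ up to the constant $f^0_N(x_N,y_N)$; by the local limit asymptotics above this density is bounded, uniformly in $N$, on the event $\{S_{\lfloor N/2\rfloor}\le M\sqrt N\}$, whose complement has probability tending to $0$ as $M\to\infty$ uniformly in $N$. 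Hence the oscillation estimate for $X^{(N)}$ on $[0,1/2]$ reduces to the corresponding estimate for the rescaled positive meander, which is the content of the companion invariance principle of Caravenna--Chaumont \cite{caravenna2008invariance}; the first and last $O(1)$ steps are dealt with by the finite-moment assumption on $\rho$. Combining the two halves gives $\lim_{\delta\to0}\limsup_{N}\pr^{+,N}_{x_N,y_N}\big(\sup_{|s-t|\le\delta}|X^{(N)}_s-X^{(N)}_t|>\eta\big)=0$ for every $\eta>0$, and with the f.d.d.\ convergence this yields weak convergence in $C[0,1]$ to the normalized Brownian excursion.

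The step I expect to be the main obstacle is the first one: assembling the \emph{uniform} local limit theorem for the positive-walk kernel $f^0_m(x,y)$ across the three regimes ($x,y$ bounded; one endpoint bounded and the other of order $\sqrt m$; both of order $\sqrt m$) with the precise boundary scaling $V(t)\sim ct$, since it is exactly this uniformity, together with the hypothesis $x_N,y_N=o(\sqrt N)$, that forces the normalization to collapse onto the excursion densities and that makes the meander-domination bound in the tightness step uniform in $N$.
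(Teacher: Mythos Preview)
This theorem is not proved in the paper: it is stated as a citation of Caravenna--Chaumont \cite{caravenna2013invariance} and used as a black box (the paper immediately applies it to deduce Theorem~\ref{thm:conditioned bridges with negative endpoints}). There is therefore no ``paper's own proof'' to compare against.

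That said, your sketch is a faithful outline of the Caravenna--Chaumont argument itself: finite-dimensional convergence via a uniform local limit theorem for the positive-walk kernel together with the renewal function $V$ of the ladder heights, and tightness by absolute continuity with respect to the meander law of \cite{caravenna2008invariance}. Your identification of the main technical obstacle --- the uniformity of the local limit theorem for $f^0_m(x,y)$ across the boundary/bulk regimes --- is exactly the point where the real work lies in \cite{caravenna2013invariance}. For the purposes of the present paper, however, none of this is needed: the theorem is simply quoted.
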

We will formulate the next theorem in a somewhat non-elegant way, but it will be helpful for us later-on.
\begin{thm}\label{thm:conditioned bridges with negative endpoints}
Let $(x_N),(y_N)$ be sequences of non-negative real numbers such that $x_N,y_N\le a_N=o(\sqrt{N})$ as $N \to \infty$. 
 Then under $\pr_{x_N-a_N,y_N-a_N}^{+,N}$, $(X^{(N)}_t)_{t\in[0,1]}$
 converges weakly in $C[0,1]$ to the Brownian excursion.  
\end{thm}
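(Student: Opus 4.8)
The plan is to deduce Theorem \ref{thm:conditioned bridges with negative endpoints} from Theorem \ref{thm:Caravenna-Chaumont conditioned bridges} by a comparison argument, reducing the negative-endpoint conditioning to the non-negative one via the already-established oscillation bound of Lemma \ref{lem:osillation comparison}. The subtlety is that $\pr_{x_N-a_N,y_N-a_N}^{+,N}$ conditions on $S_0=x_N-a_N<0$, $S_N=y_N-a_N<0$ and $S_n\ge 0$ for $1\le n\le N-1$; this is \emph{not} the same as conditioning a non-negative bridge. After the shift $\tilde S_n := S_n + a_N$ we are conditioning the walk to start at $x_N$, end at $y_N$, and stay $\ge a_N$ at the interior times — i.e.\ an excursion above the level $a_N$ rather than above $0$. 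Since $a_N=o(\sqrt N)$, diffusively this barrier is negligible, so the limit should still be the standard Brownian excursion.

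First I would fix the shift and work with $\tilde S_n = S_n + a_N$, so that the object of interest is $\tilde S$ under $\pr_{x_N}\!\left(\,\cdot\mid \tilde S_n\ge a_N,\ 1\le n\le N-1,\ \tilde S_N=y_N\right)$. Tightness: I would run the same oscillation estimate used in \cite[Lemma 4]{deuschel2005scaling}, controlling $\pr\!\left(\max_{|i-j|\le\delta N}|S_i-S_j|>\gamma,\ A\right)$ on the relevant event $A$; Lemma \ref{lem:osillation comparison} (the ``moreover'' part, with the $|i-j|\le\delta n$ restriction) bounds this by the corresponding probability for the free-barrier bridge $\pr_{0,0}^{+,N}$ with $\gamma$ replaced by $\gamma-a_N$, together with a partition-function ratio. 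That ratio is of the form $\pr_{x_N}(A_N^{a_N}(y_N))/\pr_0(A_N^0(0))$ type; by Lemma \ref{lem:comparing excursions kernel to pinning} and the Ballot-type estimates \eqref{eq:Ballot interpretation}–\eqref{eq:Ballot's corollary interpretation} already in the paper (or directly by the local CLT and the scaling $x_N,y_N,a_N=o(\sqrt N)$) these are comparable up to constants, so tightness of $(X^{(N)})$ under $\pr_{x_N-a_N,y_N-a_N}^{+,N}$ follows from tightness under the Caravenna–Chaumont measures of Theorem \ref{thm:Caravenna-Chaumont conditioned bridges}.

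Next, finite-dimensional distributions: for $0<s_1<\dots<s_k<1$ I would write the joint density of $(\tilde S_{\lfloor Ns_1\rfloor},\dots,\tilde S_{\lfloor Ns_k\rfloor})$ under the conditioned law as a product of excursion-above-$a_N$ kernels $f^{a_N}_{\cdot}(\cdot,\cdot)$ normalized by the partition function, exactly as in \cite[Chapter 8]{deuschel2005scaling}. Each such block, after diffusive rescaling, is governed by the density of the walk constrained to stay above $a_N$ between macroscopically separated times; since $a_N/\sqrt N\to 0$, a local-CLT / invariance-principle argument (again the estimates behind Lemma \ref{lem:comparing excursions kernel to pinning}, or simply monotonicity $f^{a_N}_n(x,y)\le f^0_n(x,y)\le f^{a_N}_n(x,y)/(f^{a_N}_n/f_n)$ with $f^{a_N}_n/f_n\to 1$ by \eqref{eq: lower bound comparison fan fn}) shows each block converges to the same limiting density as in the $a_N=0$, $x_N=y_N=0$ case. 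Hence the finite-dimensional distributions converge to those of the Brownian excursion, matching Theorem \ref{thm:Caravenna-Chaumont conditioned bridges}. Tightness plus convergence of finite-dimensional distributions gives weak convergence in $C[0,1]$.

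The main obstacle is uniformity: all the comparisons — the oscillation bound, the partition-function ratio, and the block densities — must hold \emph{uniformly} over the admissible endpoint sequences $x_N,y_N\in[0,a_N]$ and simultaneously handle the moving barrier at height $a_N$. Lemma \ref{lem:osillation comparison} is precisely designed to absorb the endpoint/barrier shift into an additive $a_N\to 0$ perturbation of $\gamma$, and Lemma \ref{lem:comparing excursions kernel to pinning} together with \eqref{eq: lower bound comparison fan fn} gives the needed uniform control of the kernel ratios, so the argument should go through; the only real care is in checking that the error terms in the local-CLT step are $o(1)$ uniformly, which follows because $a_N=o(\sqrt N)$ forces the barrier to be diffusively invisible. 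I would therefore spell out the tightness comparison in detail and then invoke the structure of \cite[Chapter 8]{deuschel2005scaling} verbatim for the finite-dimensional distributions, noting the single modification that the excursion kernels carry the superscript $a_N$ and the endpoints lie in $[-a_N,0]$ after shifting.
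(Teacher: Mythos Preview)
Your tightness argument is exactly the paper's: apply Lemma \ref{lem:osillation comparison} (the ``moreover'' part) to compare the oscillation event against $\pr_{0,0}^{+,N}$ at cost $\gamma\mapsto\gamma-a_N$, control the partition-function ratio $f_N(0,0)/f_N^{a_N}(x_N,y_N)$ by \eqref{eq: upper bound fan fn}--\eqref{eq: lower bound comparison fan fn} as $\le e^{C_0a_N}$, and then inherit tightness from Theorem \ref{thm:Caravenna-Chaumont conditioned bridges}.

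For the finite-dimensional distributions, however, the paper takes a shorter route than your block-kernel/local-CLT plan. Instead of expanding the joint density as a product of $f^{a_N}_{\cdot}(\cdot,\cdot)$ kernels and comparing each block to its $a_N=0$ counterpart, the paper simply conditions on $(S_1,S_{N-1})$: for any $x,y\ge 0$, the law of $(S_2,\dots,S_{N-2})$ under $\pr_{0,0}^{+,N}(\cdot\mid S_1=x,S_{N-1}=y)$ and under $\pr_{x_N-a_N,y_N-a_N}^{+,N}(\cdot\mid S_1=x,S_{N-1}=y)$ is \emph{identical} (both are the bridge from $x$ to $y$ in $N-2$ steps conditioned to stay nonnegative). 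Hence the FDD at times $s_i\in(1/N,1-1/N)$ differ only through the marginals of $(S_1,S_{N-1})$, and since $x_N/\sqrt N,\ y_N/\sqrt N,\ a_N/\sqrt N\to 0$ these differences vanish; one then reads off the limiting FDD directly from Theorem \ref{thm:Caravenna-Chaumont conditioned bridges}. Your approach would also work, but it obliges you to prove uniform local-CLT estimates for $f^{a_N}_{m}(u,v)$ with $u,v$ ranging over both the microscopic scale $[0,a_N]$ (boundary blocks) and the macroscopic scale $O(\sqrt N)$ (interior blocks) --- note in particular that the monotonicity in Lemma \ref{lem:comparing excursions kernel to pinning} is stated only for $x,y\in[0,a]$, so the sandwich you wrote does not directly cover the interior blocks. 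The paper's conditioning trick sidesteps all of this.
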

We note that the assumption $x_N,y_N\le a_N=o(\sqrt{N})$ is only to make sure that $X^N_0,X^N_1\to 0$.
We will use the theorem later on with a much stronger condition $a_N=o(\frac{1}{\sqrt{N}})$.
\begin{proof}
First we prove tightness. For a path $x\in C[0,1]$ define 
\begin{equation}\label{eq:def of oscillation fcn Gamma}
\Gamma(\delta)(x) := \sup_{\{t,s\in[0,1]:|t-s|\le\delta\}}|x_t-x_s|. 
\end{equation}

Using the fact $f_N^0(x_N-a_N,y_N-a_N)=f_N^{a_N}(x_N,y_N)$ let us rewrite
Lemma \ref{lem:osillation comparison}: 
  \begin{equation*}
  \pr_{x_N-a_N,y_N-a_N}^{+,N}\left( \max_{|i-j|\le \delta n}|S_i-S_j|>\gamma\right)f_N^{a_N}(x_N,y_N)
  \le
\pr_{0,0}^{+,N}\left( \max_{|i-j|\le \delta n}|S_i-S_j|>\gamma - a_N \right)f_N^0(0,0)
 \end{equation*}
 For every $\delta, \gamma>0$ and $n\in\N$, uniformly in $x_N,y_N\in[0,a_N]$. 
Now, by \eqref{eq: upper bound fan fn} and \eqref{eq: lower bound comparison fan fn} we get
\begin{equation}\label{eq:tightness}
  \pr_{x_N-a_N,y_N-a_N}^{+,N}\left( \max_{|i-j|\le \delta n}|S_i-S_j|>\gamma\right)
  \le
\exp(C_0 a_N) \pr_{0,0}^{+,N}\left( \max_{|i-j|\le \delta n}|S_i-S_j|>\gamma - a_N \right).
 \end{equation}
Caravenna-Chaumont Theorem \ref{thm:Caravenna-Chaumont conditioned bridges} implies in particular that 
$(X^{(N)}_t)_{t\in[0,1]}$ is tight under $\pr_{0,0}^{+,N}$,
and so by \eqref{eq:tightness}, it is also tight under $\pr_{x_N-a_N,y_N-a_N}^{+,N}$.
Indeed, the standard necessary and sufficient condition for tightness on
$C[0,1]$ is Ascoli-Arzel\'a and Prokhorov Theorems: for every $\gamma > 0$
$\lim_{\delta\to0}\sup_N \pr_{0,0}^{+,N} (\Gamma(\delta)>\gamma)=0$. 
To get our tightness, fix $\gamma>0$.
Choose $N_0$ large enough so that $\gamma-a_N>\gamma/2$ for all $N\ge N_0$. Tightness will 
hold by considering only $\delta<1/{N_0}$.

We shall now prove convergence of the finite dimensional distributions. 
Let $0<s_1<...<s_n<1$. Fix $N$ large enough so that $1/N<s_1<s_n<1-1/N$. Then $(X^{(N)}_{s_i})_{i=1,...,n}$ 
have the same distribution under both conditional 
distributions $\pr_{0,0}^{+,N}(\cdot|S_1=x,S_{N-1}=y)$ and 
$\pr_{x_N-a_N,y_N-a_N}^{+,N}(\cdot|S_1=x,S_{N-1}=y)$, for all $x,y\ge0$. 
Since $\frac{x_N}{\sqrt{N}},\frac{y_N}{\sqrt{N}}\to 0$, the difference 
between the corresponding expectations on any test function on $(X^{(N)}_{s_i})_{i=1,...,n}$ 
goes to zero as $N\to \infty$. 
We conclude by the convergence of the distributions of $(X^{(N)}_{s_i})_{i=1,...,n}$ under  
$\pr_{0,0}^{+,N}$, 
using Caravenna-Chaumont Theorem \ref{thm:Caravenna-Chaumont conditioned bridges} again.
\end{proof}

\begin{proof}[proof of Theorem \ref{thm:main thm general path limit}]
First, we shall prove tightness of $\left((X^{(N)}_t)_{t\in[0,1]},\pr^{\alpha}_{\varphi_{a_N},N}\right)$.
 We modify the definition \eqref{eq:def of oscillation fcn Gamma} as follows.  
 For a path $x\in C[0,1]$ define the modified $\delta$-oscillation of strip size $a$ by 
\begin{equation}\label{eq:def of oscillation fcn Gamma modified}
\tilde {\Gamma}^a(\delta)(x) := \sup_{\{t,s\in[0,1]:|t-s|\le\delta, s\sim_x t\}}|x_t-x_s|,
\end{equation}
where $s\sim_x t$ if and only if $x_u>a$ for all $u\in (s,t)$
 (see \cite{caravenna2007tightness} for the case $a=0$). 
 The next lemma shows bounds the oscillations on the $\varphi_a$-model 
 conditioned on the contact set and the contact locations (!) by the standard models oscillations
conditioned on the contact set. For ease of notation we denote by $i\sim_N j$ whenever 
$\frac{i}{N} \sim_{X^{(N)}} \frac{j}{N}$.
 \begin{lem}
$\pr^\alpha_{\varphi_{a},N}(\max_{|i-j|\le \delta N, i\sim_N j}|S_i-S_j|>\gamma | A, y_1,...,y_{|A|})\le
 \exp(C_0 a|A|)\pr^\alpha_{\beta_c,N} (\max_{|i-j|\le \delta N, i\sim_N j}|S_i-S_j|>\gamma-a | A)$
where $A$ is the contact set, $y_i\in[0,a]$ are the corresponding values in the strip. 
 \end{lem}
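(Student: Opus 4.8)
The plan is to condition on more than the statement records --- on the locations of the contact points \emph{and} the heights of the walk there --- and to exploit the product structure this creates. Fix the contact set $A=\{0=t_0<t_1<\dots<t_k\}$, set $n_m=t_m-t_{m-1}$, and fix the heights $y_0=0,y_1,\dots,y_k\in[0,a]$ attained at the times $t_m$ (in the constrained case $t_k=N$; in the free case $t_k=\ell_N^a$ and there is in addition a last, un-pinned block on $[t_k,N]$). Under $\pr^\alpha_{\varphi_a,N}(\,\cdot\mid A,y_1,\dots,y_{|A|})$ the weights $e^{\varphi_a(y_i)}$ are frozen and cancel against the normalisation, so by the Markov property at the $t_m$'s the path splits into \emph{independent} pieces, the $m$-th being a walk from $y_{m-1}$ to $y_m$ of length $n_m$ conditioned on $\{S_1>a,\dots,S_{n_m-1}>a\}$, i.e.\ of law $\pr_{y_{m-1}}(\,\cdot\mid A^a_{n_m}(y_m))$ (plus, in the free case, an independent ``stay above $a$'' meander on the last block). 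In the same way, under $\pr^\alpha_{\beta_c,N}(\,\cdot\mid A)$ the blocks are independent, the $m$-th having law $\pr_0(\,\cdot\mid S_1>0,\dots,S_{n_m-1}>0,S_{n_m}=0)$.

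The event in the lemma is a block-wise union: since $i\sim_N j$ forces the walk to stay above the relevant level strictly between $i$ and $j$, any $\sim_N$-pair lies inside one block $[t_{m-1},t_m]$, while conversely every admissible pair inside a block is $\sim_N$-related. So, letting $q_m$ be the probability that the $m$-th block of the $\varphi_a$-path oscillates by more than $\gamma$ over pairs with $|i-j|\le\delta N$, and $p_m$ the probability that the $m$-th block of the standard path oscillates by more than $\gamma-a$ over such pairs, the conditional probability on the left of the lemma is $1-\prod_m(1-q_m)$ and the one on the right is $1-\prod_m(1-p_m)$ (the free-case tail block is included in the product and handled identically). Applying Lemma~\ref{lem:osillation comparison} to the $m$-th block --- its ``moreover'' part when $n_m>\delta N$, its plain form otherwise --- bounds $f^a_{n_m}(y_{m-1},y_m)\,q_m$ by $f_{n_m}\,p_m$; combined with $f^a_{n_m}(y_{m-1},y_m)\ge f^a_{n_m}\ge e^{-C_0a}f_{n_m}$ (monotonicity in $x,y$ from Lemma~\ref{lem:comparing excursions kernel to pinning} and the lower bound \eqref{eq: lower bound comparison fan fn}) this gives $q_m\le e^{C_0a}p_m$ for every block.

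It remains to reassemble, for which I use the elementary fact: if $c\ge 1$ and $q_m\le c\,p_m$ with $q_m,p_m\in[0,1]$, then $1-\prod_{m=1}^{k}(1-q_m)\le c^{k}\big(1-\prod_{m=1}^{k}(1-p_m)\big)$. I would prove this by induction on $k$ in the equivalent form $\prod_{m\le k}(1-q_m)\ge 1-u_k$ with $u_k:=c^{k}\big(1-\prod_{m\le k}(1-p_m)\big)$: the step is trivial when $u_{k+1}>1$ (the left side is a product of numbers in $[0,1]$, hence nonnegative), and otherwise $u_k\in[0,1]$ too, so the inductive bound may be multiplied by $1-q_{k+1}\ge0$, and substituting $q_{k+1}\le c\,p_{k+1}$ together with $1-u_k=1-c^{k}+c^{k}\prod_{m\le k}(1-p_m)$ the claim reduces after rearrangement to $u_k(c-1)\ge c\,p_{k+1}(1-c^{k})$, which holds since its left side is $\ge 0$ and its right side is $\le 0$. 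Taking $c=e^{C_0a}$ and $k$ the number of blocks --- at most $|A|$, the free-case tail contributing at most one further factor $e^{C_0a}$ by the analogous comparison of ``stay above $a$'' meander kernels with ``stay above $0$'' ones --- produces the prefactor $\exp(C_0a|A|)$, and the lemma follows.

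The one delicate point is this reassembly. A crude union bound on the left-hand event gives $\sum_m q_m$, which is itself $\ge 1-\prod_m(1-p_m)$ and so cannot be chained back to the right-hand side of the lemma; one is forced to keep the product/complement form and push the factor $c$ through one block at a time, which is exactly what the induction does and which is the source of the $|A|$ in the exponent. The endpoint bookkeeping --- the forced contact at $0$, and the un-pinned last block in the free case --- is routine but should be written out.
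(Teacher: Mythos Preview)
Your proof is correct and follows the same route as the paper's: condition on the contact set and heights, use independence of the excursions, apply the single-block oscillation comparison \eqref{eq:tightness}, and collect one factor $e^{C_0a}$ per block. The paper compresses all of this into the single sentence ``by iterating \eqref{eq:tightness} $|A|$ times we conclude''; you have unpacked that iteration into the explicit induction $1-\prod_m(1-q_m)\le c^{|A|}\bigl(1-\prod_m(1-p_m)\bigr)$, which is the honest content of the word ``iterate'' here and is exactly the step a careful reader would have to supply. Your observation that a crude union bound on the left does \emph{not} close the argument, forcing the product/complement bookkeeping, is a genuine point the paper leaves implicit.
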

\begin{proof}
Note that conditioning on $A$ the excursions are independent. Moreover, 
conditioning on the endpoints the law of the excursions is the same as with respect to 
$\pr^{+,N}_{y_{i-1},y_i}$. By iterating \eqref{eq:tightness} $|A|$ times we conclude.
\end{proof}

\begin{cor}
If $a_N=o(N^{-1/2})$ then the sequence $\left((X^{(N)}_t)_{t\in[0,1]},\pr^{\alpha}_{\varphi_{a_N},N}\right)$
is tight.
\end{cor}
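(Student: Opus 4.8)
The plan is to verify the Ascoli--Arzel\`a/Prokhorov tightness criterion on $C[0,1]$. Since $X^{(N)}_0=0$ under $\pr^\alpha_{\varphi_{a_N},N}$, this reduces to showing $\lim_{\delta\to0}\sup_N\pr^\alpha_{\varphi_{a_N},N}(\Gamma(\delta)(X^{(N)})>\gamma)=0$ for every $\gamma>0$, with $\Gamma$ the oscillation functional of \eqref{eq:def of oscillation fcn Gamma}. The first step is to pass to the modified oscillation $\tilde\Gamma^{a_N}$ of \eqref{eq:def of oscillation fcn Gamma modified}: cutting an interval $[s,t]$ with $|t-s|\le\delta$ at the first and last visit of the (nonnegative, $0$-at-$0$) continuous path $X^{(N)}$ to $[0,a_N]$, as in \cite{caravenna2007tightness}, gives the pathwise bound $\Gamma(\delta)(X^{(N)})\le 2\tilde\Gamma^{a_N}(\delta)(X^{(N)})+2a_N$; since $a_N\to0$ it is then enough to prove $\lim_{\delta\to0}\sup_N\pr^\alpha_{\varphi_{a_N},N}(\tilde\Gamma^{a_N}(\delta)(X^{(N)})>\gamma)=0$, and in the standard way (controlling the oscillation of a piecewise-linear path by lattice-point oscillations over slightly larger lags) this further reduces to the discrete quantity $\tfrac1{\sqrt N}\max_{|i-j|\le 2\delta N,\ i\sim_N j}|S_i-S_j|$ that appears in the preceding Lemma.

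Next I would condition on the contact set $A$ and on the values $y_1,\dots,y_{|A|}\in[0,a_N]$ the walk takes there. Given $(A,\vec y)$ the excursions above the strip are independent and their joint law is \emph{independent of the pinning function}, since the Radon--Nikodym factor $\exp(\sum_n\varphi_{a_N}(S_n)\one_{[0,a_N]}(S_n))$ equals $\prod_i e^{\varphi_{a_N}(y_i)}$, a function of $(A,\vec y)$ only. The preceding Lemma (obtained by iterating \eqref{eq:tightness} once per excursion) then bounds the conditional probability by $e^{C_0a_N|A|}\,\pr^\alpha_{\beta_c,N}(\tilde\Gamma^{a_N}(\delta)>\gamma-a_N\mid A)$, a quantity independent of $\vec y$ and, being a product of positive-excursion bridge probabilities indexed by $A$, unchanged under any shift of the pinning strength. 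I would then integrate out $\vec y$ and $A$, use Lemma \ref{lem:derivative varphi wetting to near crit standard} and the partition-function comparisons at the start of Section \ref{sec:pf of scaling contact set} (valid because $a_N=o(N^{-1/2})$ makes all the relevant ratios tend to $1$) to replace the contact-set law $\Qset^\alpha_{\varphi_{a_N},N}$ first by $\Pset^\alpha_{\beta_c+c_3a_N,N}$ and then by $\Pset^\alpha_{\beta_c,N}$ --- at the cost of a factor $R_N\to1$ and an extra $e^{c_3a_N|A|}$ --- and finally apply Cauchy--Schwarz, using $\pr^\alpha_{\beta_c,N}(\cdot\mid A)^2\le\pr^\alpha_{\beta_c,N}(\cdot\mid A)$ and the tower property, to arrive at
\[
\pr^\alpha_{\varphi_{a_N},N}\big(\tilde\Gamma^{a_N}(\delta)>\gamma\big)\ \le\ R_N\,\Eset^\alpha_{\beta_c,N}\!\big[e^{2c'a_N|A|}\big]^{1/2}\,\pr^\alpha_{\beta_c,N}\big(\tilde\Gamma^{a_N}(\delta)>\gamma-a_N\big)^{1/2}
\]
for a constant $c'$ and $R_N\to1$.

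To finish I would control the two factors separately. For the first, since $a_N\sqrt N\to0$ and $|A|/\sqrt N$ converges in law (to a constant multiple of the Brownian, resp.\ Brownian bridge, local time at $0$ at time $1$) with uniformly bounded exponential moments --- obtainable from the renewal representation and \eqref{eq:sohier partition function limit near crit}, or from the estimate of \cite{derrida2009fractional} already invoked in Proposition \ref{prop:near crit standard wetting and renwal scaling limit} --- one gets $\sup_N\Eset^\alpha_{\beta_c,N}[e^{\lambda a_N|A|}]<\infty$ for every $\lambda>0$, the finitely many small $N$ being harmless since $|A|\le N$. For the second, $\tilde\Gamma^{a_N}(\delta)\le\tilde\Gamma^{0}(\delta)$ (shrinking the strip only enlarges the set of admissible index pairs), so for $N$ large enough that $a_N<\gamma/2$ it is at most $\pr^\alpha_{\beta_c,N}(\tilde\Gamma^{0}(\delta)>\gamma/2)$, which tends to $0$ as $\delta\to0$ uniformly in $N$ by tightness of the standard critical wetting model \cite[Lemma 4]{deuschel2005scaling} (again the remaining finitely many $N$ being treated via continuity of the corresponding fixed paths). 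Combining these bounds gives the corollary.

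I expect the main obstacle to be precisely the uniform exponential-moment estimate $\sup_N\Eset^\alpha_{\beta_c,N}[e^{\lambda a_N|A|}]<\infty$, i.e.\ sharp control of the number of contacts of the critical standard wetting model: this is exactly where the hypothesis $a_N=o(N^{-1/2})$ is indispensable, since for $a_N\asymp N^{-1/2}$ the factor $e^{C_0a_N|A|}$ would persist in the limit and tilt the law rather than merely perturb it. By contrast, the pathwise reduction of the oscillation and the partition-function bookkeeping are routine given the results already in the paper.
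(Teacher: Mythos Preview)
Your argument is correct, but the paper takes a shorter route at the one step you flag as the main obstacle. After conditioning on $(A,\vec y)$ and applying the preceding Lemma, you and the paper both arrive at a sum of the form
\[
\sum_A e^{C_0 a_N|A|}\,\pr^\alpha_{\beta_c,N}\big(\tilde\Gamma(\delta)>\gamma-a_N\,\big|\,A\big)\,\Qset^\alpha_{\varphi_{a_N},N}(A).
\]
You then replace $\Qset$ by $\Pset^\alpha_{\beta_c,N}$ and invoke Cauchy--Schwarz to split off $\Eset^\alpha_{\beta_c,N}[e^{2c'a_N|A|}]^{1/2}$, which you control via an exponential-moment bound on $|A|/\sqrt N$. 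The paper instead observes that, since $\Pset^\alpha_{\beta,N}(A)\propto e^{\beta|A|}\prod_i f_{t_i-t_{i-1}}$, the weight $e^{C_0a_N|A|}$ can be \emph{absorbed} as a further shift of the pinning strength: up to a partition-function ratio tending to $1$, $e^{C_0a_N|A|}\Pset^\alpha_{\beta_c+c_3a_N,N}(A)=\Pset^\alpha_{\beta_c+(c_3+C_0)a_N,N}(A)$. Since the conditional law given $A$ does not depend on $\beta$, everything recombines into the unconditional probability $\tilde C_N\,\pr^\alpha_{\beta_c+(c_3+C_0)a_N,N}(\tilde\Gamma(\delta)>\gamma-a_N)$, and the paper concludes by citing tightness of the near-critical standard wetting model from \cite[Proposition~4.3]{caravenna2018maximum}.

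So the trade-off is this: the paper sidesteps your ``main obstacle'' entirely---no Cauchy--Schwarz, no exponential moment---at the price of needing near-critical tightness as a black box; your route needs the exponential-moment input but then only uses the exactly-critical tightness of \cite[Lemma~4]{deuschel2005scaling}. Both are valid; the absorption trick is the cleaner bookkeeping.
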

\begin{proof}
First, we naturally extend the definition of $\Qset^\alpha_{\varphi_{a},N}$ to include pairs $(A,y)$ 
where $y\in[0,a]^|A|$ the vector of positions at the contact indices. 
Since $\Gamma(\delta)(x) \le \tilde\Gamma^a(\delta)(x)$, it is enough to show that
$\pr^\alpha_{\varphi_{a},N}(\tilde {\Gamma}^a(\delta)(x)>\gamma) \to 0$ as $\delta \to 0$.
\begin{eqnarray*}
 \pr^\alpha_{\varphi_{a},N}(\tilde {\Gamma}^a(\delta)>\gamma) &=& 
 \sum_{A\subset\{0,...,N\}} \int_{0}^a ...\int_{0}^a
 \pr^\alpha_{\varphi_{a},N}(\tilde {\Gamma}^a(\delta)>\gamma|A,y_1,...,y_{|A|})\times \\
 & & \times 
 \Qset^\alpha_{\varphi_{a},N}(A,y_1,...,y_{|A|})\dif y_1\cdots \dif y_{|A|} \\ 
 &\le&
 \sum_{A\subset\{0,...,N\}} 
 \exp(C_0 a_N|A|) \pr^\alpha_{\beta_c,N}(\tilde {\Gamma}(\delta)>\gamma-a_N|A)
  \Qset^\alpha_{\varphi_{a},N}(A) \\ 
\end{eqnarray*}
Now, from Lemma \ref{lem:derivative varphi wetting to near crit standard}, using the fact that $a_N\to 0$, 
we have $C'_N\to1$ so that
\begin{eqnarray*}
 {\Qset^\alpha_{\varphi_{a},N}(A)} &\le& 
 C'_N  {\Pset^\alpha_{\beta_c+c_3 a_{N},N}(A)})
\end{eqnarray*}
The partition functions ratio between pinning perturbation of constant times $a_N$ is going to 1. Hence 
we have
\begin{eqnarray*}
 \sum_{A\subset\{0,...,N\}} 
 \exp(C_0 a_N|A|) \pr^\alpha_{\beta_c,N}(\tilde {\Gamma}(\delta)>\gamma-a_N|A)
  \Qset^\alpha_{\varphi_{a},N}(A)  \\
  \le
 C_N \sum_{A\subset\{0,...,N\}} 
 \pr^\alpha_{\beta_c,N}(\tilde {\Gamma}(\delta)>\gamma-a_N|A)
 \Pset^\alpha_{\beta_c+(c_3+C_0) a_{N},N}(A)
\end{eqnarray*}
for some $C_N\to1$.
To end, note that the conditioning allows us to change $\beta_c$, to get 
\begin{eqnarray*}
 C_N \sum_{A\subset\{0,...,N\}} 
 \pr^\alpha_{\beta_c,N}(\tilde {\Gamma}(\delta)>\gamma-a_N|A)
 \Pset^\alpha_{\beta_c+(c_3+C_0) a_{N},N}(A) \\
\le
\tilde C_N \sum_{A\subset\{0,...,N\}} 
 \pr^\alpha_{\beta_c+(c_3+C_0) a_{N},N}(\tilde {\Gamma}(\delta)>\gamma-a_N|A)
 \Pset^\alpha_{\beta_c+(c_3+C_0) a_{N},N}(A)\\
 =
 \tilde C_N \pr^\alpha_{\beta_c+(c_3+C_0) a_{N},N}(\tilde {\Gamma}(\delta)>\gamma-a_N)
 \end{eqnarray*}

To sum up, tightness follows once we show tightness under $\pr^\alpha_{\beta_c+(c_3+C_0) a_{N},N}$. 
The latter is a special case of \cite[Proposition 4.3]{caravenna2018maximum}.
 \end{proof}

To prove the convergence of finite dimensional distributions we follow closely \cite[Chapter 8]{deuschel2005scaling},
with the necessary modifications. Let us deal with the constraint case. Let $(\beta_t)_{t\in[0,1]}$ be the Brownian bridge. Let $0<s_1<...<s_n<1$. 
Remember the law of $\mathcal{A}^\alpha_N$ given in \eqref{eq:def of measure on sets with varphi}, where 
$\varphi_a$ satisfying Condition $A$.

To unify the notations denote by $Z(x)$ the zero-set of the path $x\in C[0,1]$. 
Given a closed set $Z\subset[0,1]$ and $t\in[0,1]$ we let $d_t(Z):=\inf Z \cap [t,1]$, 
$g_t(Z):=\sup Z \cap [0,t]$, and $\Lambda_t(Z):=d_t-g_t$. 

By Theorem \ref{thm:main thm general zero set limit} and the Skorokhod representation Theorem there is
a sequence $Z_N$ with laws $\mathcal{A}^c_N$ converging a.s. to $\mathcal{A}_{1/2}^c$, 
in the Matheron topology defined above.

We define random equivalence relations, with respect to $Z_N$, 
on $\{s_1,...,s_n\}$ by declaring that $s_i\sim s_j$ 
if and only if either $d_{s_i}=d_{s_j}$ or $g_{s_i}=g_{s_j}$. 
In words, $s_i\sim s_j$ if and only if $(s_i,s_j)$ is contained in an excursion of $X^{(N)}$ (in law).

Notice that a.s.\ $(\beta_{s_i})\ne 0$ for all $1\le i\le n$. 
Since the Matheron topology is also homehomorphic to the Hausdorf metric space (see (29) and (30) in \cite{deuschel2005scaling})
then $g_{s_i}(Z_N)$ and $d_{s_i}(Z_N)$ converge a.s.\ to strictly positive random variables, and $A_k^N,k=1,...,I^N$,
the random equivalent classes of $\{s_1,...,s_n\}$  (here $I^N\le n$)
are a.s.\ eventually constant with $N$ (but still random). Denote it by $A_k,k=1,...,I$.
Let $W_{s_i}^{N,(y^N_{i-1},y^N_i)}$ $i=1,...,n$, $y_i^N\in[0,a_N]$ 
be a set of random variables with values in $C[0,1]$, so that 
$W_{s_i}^{N,(y^N_{i-1},y^N_i)}$ is distributed as $X^N$ under $\pr^{+,N}_{y_{i-1},y_i}$, and is independent
of $g_{s_i}(Z_N)$ and $\Lambda_{s_i}(Z_N)$.
Theorem \ref{thm:conditioned bridges with negative endpoints} tells us that $W_{s_i}^{N,(y^N_{i-1},y^N_i)}$ converges weakly
to the Brownian excursion $(\mathcal{E}_t)_{t\in[0,1]}$.
Set 
\[
M^N_{s_i} = \sum_{k=1}^{I^N}\one_{s_i\in A_k^N}\sqrt{\Lambda_{A^N_k}}W_{s_i}^{N,(y^N_{i-1},y^N_i)}
\left( \frac{s_i-g_{A_k}}{\Lambda_{A_k}}
\right).
\]
Then $(M_{s_i}^N)_{i=1,...,n}$ is distributed at $\pr^c_{\varphi_{a_N},N}$ conditioned on the excursions' endpoints ${y_1,...,y_{I^N}}$.
Noting that the density 
$\pr((|\beta_{s_i}|)_{i\in A_k}\in\dif x)=\pr( \sqrt(\Lambda_{A_k}) (\mathcal{E}_{s_i/\Lambda_{A_k}})_{i\in A_k}\in\dif x)$ for any $x\in\reals^{A_k}$
(see \cite[Chapter 8]{deuschel2005scaling}). 
Using dominated convergence and the Brownian scaling of $(\mathcal{E}_t)_{t\in[0,1]}$, the 
finite dimensional distributions for the path conditioned on the 
endpoints $y_i^N$ has a limiting law $|\beta|$. 
But since the limit is independent of $y_i^N$, we conclude.  
The free case follows analogously. 
\end{proof}


\section{The strip wetting model with constant pinning}\label{sec:proofs strip wetting model}
The goal in this chapter is to prove Theorem \ref{thm:critical wetting close to standard critical wetting}.
\subsection{The associated Markov renewal process, integral operator, and free energy, and the critical value}
\label{sec:Detour - the associated integral operator and the critical value derivation}
To fix notations and for sake of self containment, we shall elaborate on 
the analysis of the strip wetting model, and follow closely Sohier \cite{sohier2015scaling}. 
We state here the argument mostly without proofs, which can be found in \cite{sohier2015scaling}.
We remind the reader that in our case $\varphi=\varphi_a^\beta:=\beta\one_{[0,a]}$. 
Here $a\ge 0$ and $\beta\in\reals$ are the corresponding parameters. 
Let us first introduce a notation for the corresponding measures in this case.
\begin{equation}\label{eq:StripModelDefFree}
{\dif{\pr}^f_{a,\beta,N}} (S)= \frac{1}{{Z}^f_{a,\beta,N}}
 \exp\left(\beta\sum_{k=1}^N\one_{[0,a]}(S_k)\right)\one_{\mathcal{C}_N}{\dif\pr_{0}}(S),
\end{equation}
\begin{equation}\label{eq:StripModelDefConstraint}
{\dif{\pr}^c_{a,\beta,N}} (S)= \frac{1}{{Z}^c_{a,\beta,N}}
 \exp\left(\beta\sum_{k=1}^N\one_{[0,a]}(S_k)\right)\one_{[0,a]}(S_{N})\one_{\mathcal{C}_N}{\dif\pr_{0}}(S),
\end{equation}
and the density
\begin{equation}\label{eq:StripModelDefConstraintDensity}
{{Z}^c_{a,\beta,N}} (S)(x,y)= \E_{x}\left[
 \exp\left(\beta\sum_{k=1}^N\one_{[0,a]}(S_k)\right)\one_{\mathcal{C}_N} \one_{\{ y\}}(S_{N}) \right].
\end{equation}
Remember the density
$$f^a_n(x,y):=\frac{1}{\dif y}\pr _x [S_1>a,...,S_{n-1}>a,S_n\in \dif y]$$
 with respect to the Lebesgue measure,
where $$f^a_1(x,y):=\rho(x-y).$$
Define the resolvent kernel density on $[0,a]$
\begin{equation}
b_\lambda^a(x,y):= \sum_{n=1}^\infty e^{-\lambda n} f^a_n(x,y)\one_{[0,a]^2}(x,y)
\end{equation}
for all $\lambda\ge 0$.
The following Lemma is an easy estimate, we differ its proof to Appendix \ref{appendix:app1}.
\begin{lem}\label{lem:b is Hilbert-Schmidt}
$b_\lambda^a$ is a kernel density of a Hilbert-Schmidt integral operator, for all $\lambda\ge0$.
In other words,
$\int_{0}^{\infty} \int_{0}^{\infty} b^a_\lambda(x,y)^2 \dif x\dif y <\infty$.
\end{lem}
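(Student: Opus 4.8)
The plan is to reduce the claim to the already-established pointwise domination of the excursion kernels on the strip together with the summability of the $f_n$'s, so that the Hilbert--Schmidt norm is bounded by a finite quantity depending only on $a$ and $\beta_c$.

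First I would observe that, by its very definition, $b_\lambda^a(x,y)$ vanishes off $[0,a]^2$, so the double integral in the statement is in fact an integral over $[0,a]^2$. Then, for fixed $(x,y)\in[0,a]^2$ and $\lambda\ge 0$, I would use $e^{-\lambda n}\le 1$ for all $n\ge 1$ together with the bound $f^a_n(x,y)\le f^a_n(a,a)=f_n$ of Lemma \ref{lem:comparing excursions kernel to pinning} (valid for every $n\ge 1$ and all $x,y\in[0,a]$) to get
\[
0\le b_\lambda^a(x,y)\le \sum_{n=1}^\infty f^a_n(x,y)\le \sum_{n=1}^\infty f_n = e^{-\beta_c},
\]
where the last identity is \eqref{eq:def of critical beta} and finiteness of the series is guaranteed by the asymptotics \eqref{eq:asymptotic of fn}. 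Hence $b_\lambda^a$ is a bounded kernel supported in $[0,a]^2$, and therefore
\[
\int_0^\infty\int_0^\infty b_\lambda^a(x,y)^2\,\dif x\,\dif y=\int_0^a\int_0^a b_\lambda^a(x,y)^2\,\dif x\,\dif y\le a^2 e^{-2\beta_c}<\infty,
\]
which is exactly the Hilbert--Schmidt property.

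I do not expect any genuine obstacle here: the single nontrivial ingredient is the uniform estimate $f^a_n(x,y)\le f_n$ supplied by Lemma \ref{lem:comparing excursions kernel to pinning}, and everything else is bookkeeping. If one wished to avoid invoking that lemma, an alternative would be to dominate $f^a_n(x,y)$ for $n\ge 2$ by the free kernel $f^0_n(x,y)$ and extract the $n^{-3/2}$ decay from a Ballot-type bound as in the proof of Lemma \ref{lem:estimating fn(a)}; this also works but is strictly more laborious, so I would present the short argument above.
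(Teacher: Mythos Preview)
Your argument is correct and follows essentially the same route as the paper's own proof: both rely on the domination $f^a_n(x,y)\le f_n$ from Lemma~\ref{lem:comparing excursions kernel to pinning} together with the summability $\sum_n f_n<\infty$, and then integrate over the square $[0,a]^2$. The only cosmetic difference is that the paper expands the square and bounds the double sum term by term, whereas you first obtain the pointwise bound $b_\lambda^a\le e^{-\beta_c}$ and then square and integrate; the resulting estimates $a^2(\sum_n f_n)^2$ are identical.
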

Let $\delta_a(\lambda)$ be the eigenvalue corresponding to the integral operator defined by the kernel density $b_\lambda^a$. We note that since $b_\lambda^a$ is smooth,
strictly positive, and point-wise decreasing with $\lambda\ge 0$, then $\delta_a(\lambda)$ is also decreasing,
continuous and moreover, its corresponding left eigenfunction $V^a_{\lambda}(\cdot)$ is continuous and strictly positive on $[0,a]$.
In particular, $\delta_a(\lambda)$ has an function inverse which is also continuous, strictly positive and decreasing
$\delta_a^{-1}(\cdot):[0,\delta_a(0))\to (0,\infty)$.

Define the free energy by
$$F^a(\beta):= \delta_a^{-1}(e^{-\beta})$$ whenever $\beta\ge\beta_c(a):=-\log(\delta_a(0))$ and set
$F^a(\beta):= 0$ if $\beta<\beta_c(a)$.
In the critical and supper-critical case, $\beta\ge\beta_c(a)$, we denote the corresponding left eigenfunction by
$V_{a,\beta}(\cdot):=V^a_{F^a(\beta)}(\cdot)$, that is left eigenfunction equation reads:
\begin{equation}
\int_{0}^a \sum_{n=1}^\infty e^{-F^{a}(\beta) n} f^a_n(x,y) \frac{V_{a,\beta}(y)}{V_{a,\beta}(x)}e^\beta\dif y =1
 \end{equation}
for all $x\in[0,1]$. 
Note that by symmetry of $f_n^a$, the left eigenvalue equals the right eigenvalue and moreover one can check that 
in this case the measure with density $V_{a,\beta}^2$ is invariant for the Markov process on $[0,a]$
with jump density $\int_{0}^a \sum_{n=1}^\infty e^{-F^{a}(\beta) n} f^a_n(x,y) \frac{V_{a,\beta}(y)}{V_{a,\beta}(x)}e^\beta$.

In the critical case we omit the $\beta_c(a)$ from the notation and write  
$$V_{a}(\cdot):=V^a_{F^a(\beta_c)}(\cdot)=V^a_{0}(\cdot).$$
In particular,
\begin{equation}
\int_{0}^a \sum_{n=0}^\infty q^a_n(x,y) \dif y =1
 \end{equation}
 for all $x\in[0,a]$, where $ q^a_n(x,y) := \frac{1}{\gamma^a(x,y)}f^a_n(x,y) = f^a_n(x,y) \frac{V_{a}(y)}{V_{a}(x)}e^{\beta_c(a)}$.

\subsection*{Strip model in terms of Markov renewal}\label{sec:markov renewal}
Let $\mathcal{P}^\beta$ be measure of a Markov renewal process $(\tau, J)$ on $\N\times [0,a]$ with kernel
density \[q^{a,\beta}_n(x,y):=e^{-F^{a}(\beta) n} f^a_n(x,y) \frac{V_{a,\beta}(y)}{V_{a,\beta}(x)}e^\beta.
\]
In particular, at criticality $q^{a,\beta_c(a)}=q^{a}$.
We then have
\[
{Z}^c_{a,\beta,N}(x,y)\dif y=\mathcal{P}^\beta(N\in \tau, j_0=x,j_N\in\dif y)e^{F^a(\beta)N} \frac{V_{a,\beta}(x)}{V_{a,\beta}(y)}.
\]
And in particular
\[
{Z}^c_{a,\beta_c(a),N}(x,y)\dif y=\mathcal{P}^\beta(N\in \tau, j_0=x,j_N\in\dif y)\frac{V_{a}(x)}{V_{a}(y)}.
\]

Therefore, under our initial measure the density of the zero-set $A$ in $[0,N]$ together with the corresponding points 
$J(A)\subset [0,a]^{|A|}$ is
\[{\pr}^c_{a,\beta,N}((A,J(A)))= \mathcal{P}^\beta((A,J(A))|N\in \tau),\]
and more generally
\[{\pr}^c_{a,\beta,N}(x,y)((A,J(A)))= \mathcal{P}^\beta((A,J(A))|N\in \tau, j_0=x, j_N=y).\]

\subsection{Strip wetting with critical pinning satisfies Condition (A) - proof of Theorem \ref{thm:critical wetting close to standard critical wetting}.}\label{sec:Pf of Proposition critical wetting close to standard critical wetting}
We will choose $V_a$ so that $\int_0^a V_a(x)^2\dif x =1$. Remember the eigenvalue equation 
\[
V_{a}(x) = e^{\beta_c(a)} \int_{0}^a \sum_{n\ge 1} f^a_n(x,y)V_{a}(y)\dif y,
\]
$x\in [0,a]$.
Note that for a fixed $a>0$, $V_{a}$ is continuous and strictly positive on $[0,a]$ since does $f^a_n(x,y)$. Also 
$f^a_n$ is continuous and is dominating a summable series (of the form $c(a)n^{-3/2}$), then so is $V_{a}$, and moreover its
derivatives, whenever defined, are given by
\[
\frac{\partial^m}{\partial x^m} V_{a}(x) =e^{\beta_c(a)}\int_{0}^a \sum_{n\ge 1}\frac{\partial^m}{\partial x^m}f^a_n(x,y)
V_{a}(y)\dif y,
\]
$m\ge 1$.
Therefore, the simple estimate $\frac{\partial}{\partial x}f^a_n(x,y)\ge (a-x) \ge f^a_n(x,y)$ implies that also
\[
 \frac{\partial}{\partial x}V_a(x)\ge (a-x)  V_a(x).
 \]
Integrating, we get
\begin{equation}\label{eq:estimate V}
\frac{V_a(z)}{V_a(x)} \ge e^{a(z-x)-\frac{1}{2}(z^2-x^2)}
\end{equation}
whenever $0\le x\le z \le a$.
Using it for $z=a, x=y$, we have
\begin{eqnarray}
e^{-\beta_c(a)} &=& \int_{0}^a \sum_{n\ge 1} f^a_n(a,y)\frac{V_{a}(y)}{V_{a}(a)}\dif y\\
&\le&
\int_{0}^a \sum_{n\ge 1} f^a_n(a,y) e^{-\frac{1}{2}a^2+ay-\frac{1}{2}y^2} \dif y\\
&\le&
\int_{0}^a e^{-\frac{1}{2}a^2+ay-\frac{1}{2}y^2} \dif y \cdot \sum_{n\ge 1} f_n \\
&=&
e^{-\beta_c} \int_{0}^a e^{-\frac{1}{2}(a-y)^2} \dif y\\
&=&
e^{-\beta_c} \int_{0}^a e^{-\frac{1}{2}y^2} \dif y\\
&\le&
a e^{-D a^2} e^{-\beta_c}.
\end{eqnarray}
(Indeed, $e^{-x}=1-x+o(x)$, so  
$\int_{0}^a e^{-\frac{1}{2}y^2} \dif y - a e^{-D a^2}= -\frac{1}{6}a^3 + D a^3 + o(a^3) $
  and thus for $D<\frac{1}{6}$ the last expression is negative whenever $a>0$ is small enough.)
Therefore the lower bound
\[
ae^{\beta_c(a)-\beta_c}\ge e^{D a^2}
\]
is achieved.
For the upper bound, note first that since $V_a$ is strictly positive \eqref{eq:estimate V} implies that
it is also (strictly) increasing on $[0,a]$. In particular, $V_a(y)\ge V_a(0)$ for all $y\in[0,a]$, and, using the
lower bound \eqref{eq: lower bound comparison fan fn}, we get
\begin{eqnarray}
e^{-\beta_c(a)} &=& \int_{0}^a \sum_{n\ge 1} f^a_n(0,y)\frac{V_{a}(y)}{V_{a}(0)}\dif y\\
&\ge&
\int_{0}^a \sum_{n\ge 1} f^a_n(a,y) \dif y\\
&\ge&
\int_{0}^a \dif y \sum_{n\ge 1} f_ne^{-C_0 a} \\
&=&
ae^{-C_0 a-\beta_c}.
\end{eqnarray}
Therefore, the upper bound
\[
ae^{\beta_c(a)-\beta_c}\le e^{C_0 a}
\]
is also achieved.

\section{Last remark on order $\frac{1}{\sqrt{N}}$ shrinking strips}\label{sec:remarks}
We actually proved that under 
$\EQset^\alpha_{\frac{c}{\sqrt{N}},\beta_c(\frac{\epsilon}{\sqrt{N}}),N}$
(or generally, under $\EQset^\alpha_{\frac{c}{\sqrt{N}},\varphi_{\frac{c}{\sqrt{N}}},N}$)
$\mathcal{A}^\alpha_{N}$ converges weakly to a random set $\mathcal{B}^\alpha$
which is absolutely continuous with respect to $\mathcal{A}^\alpha_{1/2}$. 
Moreover, for every $\epsilon>0$ we can find $0<c$ small enough so that the density $D$ is bounded by 
\[
 (1-\epsilon)e^{-\epsilon L_1} \le D \le (1+\epsilon)e^{\epsilon L_1}.
\]

\section{Acknowledgements}\label{sec:acknowledgements}
We are grateful for the financial support from the German Research Foundation through the research unit FOR 2402 – Rough paths, stochastic partial differential equations and related topics.
T.O.\ owes his gratitude to Chiranjib Mukherjee for stimulating discussions. 
He also thanks Noam Berger, Francesco Caravenna, Oren Louidor, Nicolas Perkowski, Renato Dos Santos, and Ofer Zeitouni for useful ideas.

\begin{appendices}
\section{}\label{appendix:app1}
\begin{proof}[Proof of Lemma \ref{lem:b is Hilbert-Schmidt}]
By Lemma \ref{lem:comparing excursions kernel to pinning}
 \begin{eqnarray*}
\int_{0}^{\infty} \int_{0}^{\infty} b^a_\lambda(x,y)^2 \dif x\dif y &=&
\int_{0}^{a} \int_{0}^{a} \left( \sum_{n=0}^\infty e^{-\lambda n} f^a_n(x,y)\right)
\left( \sum_{m=0}^\infty e^{-\lambda m} f^a_m(x,y)\right) \dif x\dif y \\
&=&
\int_{0}^{a} \int_{0}^{a} \sum_{n,m=0}^\infty e^{-\lambda (n+m)} f^a_n(x,y)f^a_m(x,y) \dif x\dif y \\
&=&
\sum_{n,m=0}^\infty e^{-\lambda (n+m)} \int_{0}^{a} \int_{0}^{a}  f^a_m(x,y) f^a_n(x,y) \dif x\dif y \\
&\le&
\sum_{n,m=0}^\infty e^{-\lambda (n+m)} f_n f_m\int_{0}^{a} \int_{0}^{a} \dif x\dif y \\
&\le&
 c^2 a^2\sum_{n,m=0}^\infty e^{-\lambda (n+m)} (nm)^{-3/2} \\
&=&
\left(  ca \sum_{n=0}^\infty e^{-\lambda n} n^{-3/2} \right)^2 <\infty
\end{eqnarray*}
for every $\lambda\ge 0$.
\end{proof}

\section{}\label{appendix:app2}

\begin{lem}\label{lem:enough to bound}
Let $(R_N)_{N\ge 1}$ be a sequence of non-negative random variables. Assume that there exist some $\epsilon_0>0$ and $C<\infty$ so that
$\E[e^{\epsilon_0 R_N}]\le C$ for all $N$. Then $\E[e^{\epsilon_N R_N}]\to 1$ for every sequence $\epsilon_N \to 0$.
\end{lem}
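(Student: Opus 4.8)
The plan is to sandwich $\E[e^{\epsilon_N R_N}]$ between an upper and a lower bound, each of which tends to $1$. Since $\epsilon_N\to 0$, I may assume without loss of generality (splitting $\epsilon_N$ into its positive and negative parts, as is done elsewhere in the paper) that either $0\le\epsilon_N\le\epsilon_0$ for all $N$, or $\epsilon_N\le 0$ for all $N$, and treat the two cases separately.

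For the upper bound, the key step is an interpolation via Jensen's inequality (equivalently Hölder). In the case $0\le\epsilon_N\le\epsilon_0$, set $\theta_N:=\epsilon_N/\epsilon_0\in[0,1]$ and use concavity of $t\mapsto t^{\theta_N}$ on $[0,\infty)$ together with the hypothesis:
\[
\E[e^{\epsilon_N R_N}]=\E\big[(e^{\epsilon_0 R_N})^{\theta_N}\big]\le\big(\E[e^{\epsilon_0 R_N}]\big)^{\theta_N}\le C^{\theta_N}.
\]
Since $C^{\theta_N}=\exp(\theta_N\log C)\to 1$ as $\theta_N\to 0$, this gives $\limsup_N\E[e^{\epsilon_N R_N}]\le 1$. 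In the case $\epsilon_N\le 0$ the upper bound is trivial: $R_N\ge 0$ forces $e^{\epsilon_N R_N}\le 1$, hence $\E[e^{\epsilon_N R_N}]\le 1$.

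For the lower bound I would first note that the hypothesis forces $\sup_N\E[R_N]<\infty$: from $r\le\epsilon_0^{-1}e^{\epsilon_0 r}$ for $r\ge 0$ one gets $\E[R_N]\le C/\epsilon_0=:M$ for all $N$. Then Jensen's inequality applied to the convex function $\exp$ yields
\[
\E[e^{\epsilon_N R_N}]\ge e^{\epsilon_N\E[R_N]}\ge e^{-|\epsilon_N|M}\longrightarrow 1.
\]
Combining the two bounds gives $\E[e^{\epsilon_N R_N}]\to 1$. There is no genuine obstacle here; the only point needing care is recognizing that one should compare $e^{\epsilon_N R_N}$ with a fractional power of the fixed dominating variable $e^{\epsilon_0 R_N}$, rather than attempting a direct truncation and dominated-convergence argument, which is awkward because the underlying variables $R_N$ change with $N$.
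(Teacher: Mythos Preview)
Your proof is correct but follows a genuinely different route from the paper's. The paper proceeds exactly by the ``direct truncation'' strategy you dismiss as awkward: it fixes a cutoff $r_0$, bounds the contribution on $\{R_N\le r_0\}$ by $e^{\epsilon_N r_0}$, and controls the tail on $\{R_N>r_0\}$ via Cauchy--Schwarz combined with the exponential Markov bound $\pr[R_N>r_0]\le Ce^{-\epsilon_0 r_0}$; the lower bound for $\epsilon_N<0$ is handled by the same truncation. Your argument replaces all of this with two clean applications of Jensen's inequality: concavity of $t\mapsto t^{\theta_N}$ (equivalently H\"older with exponent $\epsilon_0/\epsilon_N$) for the upper bound, and convexity of $\exp$ together with the uniform first-moment bound $\E[R_N]\le C/\epsilon_0$ for the lower bound. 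Your approach is shorter and avoids the two-parameter $(\delta,r_0)$ bookkeeping; the paper's truncation argument, on the other hand, is more robust in that it would still work if one only had uniform tightness of the $R_N$ rather than a uniform exponential moment.
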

\begin{proof}
We first assume that $\epsilon_N>0$. Let $\delta>0$. It is enough to show that $\E[e^{\epsilon_N R_N}]\le 1+\delta$ for all $N$ large enough.
By Chebyshev's Inequality $\pr[R_N>r]\le Ce^{-\epsilon_0 r}$ for all $r$.
Take $r_0$ so that $Ce^{-\epsilon_0 r_0/2}<\delta/2$. It holds that
\begin{eqnarray*}
\E[e^{\epsilon_N R_N}] &=&
\E[e^{\epsilon_N R_N}\one_{R_N\le r_0}] + \E[e^{\epsilon_N R_N}\one_{R_N> r_0}] \\
&\le& e^{\epsilon_N r_0} + \E[e^{2\epsilon_N R_N}]^{1/2}\pr[R_N>r_0]^{1/2}\\
&\le& 1+ \delta/2 + C^{1/2} C^{1/2}e^{-\epsilon_0 r_0/2}\\
&\le& 1+ \delta
\end{eqnarray*}
whenever $N$ is so large so that both $e^{\epsilon_N r_0}<1+\delta/2$ and $2\epsilon_N\le \epsilon_0$ hold.
Here we used Cauchy-Schwartz in the first inequality and the fact that $\E[e^{\epsilon R_N}]$ is increasing in $\epsilon$ in the second one.
The proof for $-\epsilon_N$ is similar. Indeed,
\begin{eqnarray*}
\E[e^{-\epsilon_N R_N}] &\ge&
\E[e^{-\epsilon_N R_N}\one_{R_N\le r_0}]  \\
&\ge& e^{-\epsilon_N r_0} (1-\pr[R_N \ge r_0])\\
&\ge& (1- \frac{\delta}{2})(1- \frac{\delta}{2})\\
&\ge& 1- \delta
\end{eqnarray*}
whenever $r_0$ is chosen so that $Ce^{-\epsilon_0 r_0}\le \delta/2$
and then $N$ is so large so that $e^{-\epsilon_N r_0}\le 1-\delta/2$.
For general $\epsilon_N$'s, the lemma follows once we write them as
$\epsilon_N=\epsilon_N^+ - \epsilon_N^-$, the negative part subtracted from the positive part, and use the above on each part separately.
\end{proof}

\section{}\label{appendix:app3}
The goal of this section is to point out the connection between two definitions of the standard wetting model. 
One definition is given in the original presentation by Deuschel, Giacomin, and Zambotti \cite{deuschel2005scaling}, and Caravenna, Giacomin, and Zambotti \cite{caravenna2006sharp},
while the other is corresponding to the one e.g.\ in Giacomin \cite{giacomin2007random}, Sohier \cite{sohier2013scaling},\cite{sohier2015scaling}, 
and others, including the current paper. 
For ease of presentation we shall work in the Gaussian case $V(x)=\frac{1}{2}x^2$. 

First, we present the standard wetting model in the constraint case corresponding to \cite{deuschel2005scaling} and \cite{caravenna2006sharp} 
\begin{equation}\label{eq:StandardPinningConstrainedDef}
P_{\beta,N}^c =\frac{1}{Z_{\beta,N}^c }
\exp\left({-\frac{1}{2}\sum_{i=1}^{N}(x_{i}-x_{i-1})^2}\right)
\prod _{i=1}^N \left(\dif x_i\one_{[0,\infty)}+e^\beta \delta_0(\dif x_i)\right),
\end{equation}
for $x_0=x_N=0$, where the partition function is given by 
\begin{equation}\label{eq:pinningModGenFunc}
Z_{\beta,N}^c =
\int_{0}^{\infty}\int_{0}^{\infty}\exp\left({-\frac{1}{2}\sum_{i=1}^{N}(x_{i}-x_{i-1})^2}\right)
\prod _{i=1}^N \left(\dif x_i+e^\beta \delta_0(\dif x_i)\right).
\end{equation}
Note that $f_n=f^0_n(0,0)={(2\pi)^{-\frac{n}{2}}} Z^c_{0,n}$. 
Moreover $f_1^0(x,y) =\frac{1}{\sqrt{2\pi}}e^{-\frac{1}{2}(x-y)^2}$ for $x,y\ge0$, and one can write
\begin{eqnarray*}
f^a_n(x,y) &=&
\pr _x [S_1>0,...,S_{n-1}>0|S_n = y] \frac{1}{\dif y}\pr _x [S_n\in \dif y] \\
&=& \pr _x [S_1>0,...,S_{n-1}>0|S_n = y] \frac{1}{\sqrt{2\pi n}}e^{-\frac{1}{2n}(x-y)^2}.
\end{eqnarray*}
In the case $x=y=0$, as the increments are stationary and their density is continuous we note that $\pr _0 [S_1>0,...,S_{n-1}>0|S_n = 0]=\frac{1}{n}$
(e.g., using the following argument: for a path of size $n$ from zero to zero with distinct increments, 
the only rotation of its increments giving a path in $\mathcal{C}_N$ is the one for which the path is starting 
at its minimum, however all increments' rotations have the same probability). 
We therefore have
\begin{equation}\label{eq:f_n}
f_n = \frac{1}{\sqrt{2\pi}}n^{-3/2}.
\end{equation}
Define $\beta_c$ to be the constant so that $e^{\beta_c}\sum_{n\ge 1}f_n=1$, and set 
\[
q(n)=e^{\beta_c}f_n, \, n\ge 1,
\]
so that $q$ is a probability mass function.
Reparameterizing \eqref{eq:pinningModGenFunc} with ${\beta-\beta_c}$ and normalizing we define
\begin{equation*}
\tilde{Z}^c_{\beta,0} :=1, \, \,\tilde{Z}^c_{\beta,N} := e^{\beta-\beta_c} 
{{(2\pi)^{-\frac{N}{2}}}}Z^c_{{\beta-\beta_c},N}.
\end{equation*}
Then by summing over the first contact (i.e. the first index $1\le t\le n$ so that $S_t=0$)
we have
$$\tilde{Z}^c_{\beta,n}=e^{\beta-\beta_c} \sum_{t=1}^n q(t) \tilde{Z}^c_{\beta, n-t}.$$ 
That is,
$$\tilde{Z}^c_{\beta,n}=\sum_{k=1}^n \sum_{0=:t_0<t_1<...<t_k=n} e^{(\beta-\beta_c)k} q(t_i-t_{i-1}).$$ 
In other words, 
$$\tilde{Z}^c_{\beta,N}=\sum_{k\ge 0} e^{(\beta-\beta_c)k} q^{*k}(N)$$
where $q^{*k}(N)$ is the $k$-fold convolution of $q$ evaluated in $N$.
Therefore it is at least intuitively clear that the critical value is indeed $\beta_c$.

Analogously,
\begin{equation}\label{eq:StandardPinningFreeDef}
P_{\beta,N}^f =\frac{1}{Z_{\beta,N}^f }
\exp\left({-\frac{1}{2}\sum_{i=1}^{N}(x_{i}-x_{i-1})^2}\right)
\prod _{i=1}^N \left(\dif x_i\one_{[0,\infty)}+e^\beta \delta_0(\dif x_i)\right),
\end{equation}
with $x_0=0$, and the partition function is given by 
\begin{equation}\label{eq:StandardPinningFreeGenFcn}
Z_{\beta,N}^f =
\int_{0}^{\infty}\int_{0}^{\infty}\exp\left({-\frac{1}{2}\sum_{i=1}^{N}(x_{i}-x_{i-1})^2}\right)
\prod _{i=1}^N \left(\dif x_i+e^\beta \delta_0(\dif x_i)\right).
\end{equation}
Setting
\begin{equation*}
\tilde{Z}^f_{\beta,N} := e^{\beta-\beta_c} 
{{(2\pi)^{-\frac{N}{2}}}}Z^f_{e^{\beta-\beta_c},N},
\end{equation*}
we have 
\[
 \tilde{Z}^f_{\beta,N} =\sum_{t=1}^n \tilde{Z}^c_{\beta,t}P(N-t) 
\]
(remember the notation from \eqref{eq:def of P(n)}).
By conditioning on the contact set the original measures $P_{\beta,N}^\alpha$, $\alpha\in\{c,f\}$ are easily expressed in terms of the $\tilde{Z}^\alpha_n$, 
see (9), (13), and (17) of \cite{deuschel2005scaling}.

Now, for the strip wetting model with constant pinning, define 
\begin{equation*}
P_{a,\beta,N}^c (\dif x)=\frac{1}{Z_{a,\beta,N}^c }
\exp\left({-\frac{1}{2}\sum_{i=1}^{N}(x_{i}-x_{i-1})^2}\right)
\prod _{i=1}^N \left(\dif x_i\one_{[0,\infty)}+ e^{\beta} \one_{[0,a]}(x_i)\dif x_i\right)
\end{equation*}
for $x_0=0$,$x_n\in[0,a]$, where the partition function is given by 
\begin{equation*}
Z_{a, e^\beta,N}^c =
\int_{0}^{\infty}\int_{0}^{\infty}\exp\left({-\frac{1}{2}\sum_{i=1}^{N}(x_{i}-x_{i-1})^2}\right)
\prod _{i=1}^N \left(\dif x_i+e^\beta\one_{[0,a]}(x_i)\dif x_i\right).
\end{equation*}

We note that $P_{a,\beta,N}^c$ coincides with $\mathbb{P}_{a,\beta,N}^c$, the strip wetting model defined in 
\eqref{eq:ModelDefConstraint}.
Indeed, first note that conditioning on the contact set and the contact values the measures coincide. 
Then, we conclude using the fact that the induced measures on contact sets are proportional and hence equal.
 {
Moreover, as $a\to 0$ 
\[
\exp\left({-\frac{1}{2}\sum_{i=1}^{N}(x_{i}-x_{i-1})^2}\right)\prod _{i=1}^N (\dif x_i\one_{[0,\infty)}+ ae^{\beta(a)} \frac{1}{a}\one_{[0,a]}(x_i)\dif x_i  )
\]
convergences weakly to
\[
\exp\left({-\frac{1}{2}\sum_{i=1}^{N}(x_{i}-x_{i-1})^2}\right)\prod _{i=1}^N (\dif x_i\one_{[0,\infty)}+ e^\beta\delta_0(\dif x_i) ).
\]
whenever $\log\beta(a)+\log(a)\to \beta$ as $a\to 0$. 
Hence $\mathbb P_{a,\beta,N}^c$ is a caricature of the model corresponding to the $\delta$-pinning model 
(i.e. the standard wetting model). 
The corresponding $\tilde{Z}'s$ will be now expressed in terms of the kernel density 
$f_n^a(x,y)\frac{V_a(x)}{V_a(y)}e^{\beta_c(a)}$ 
of the corresponding Markov renewal process. The free case is analogous. 

To end, we note that the measures for a pinning function $\varphi_a$ can be define analogously. A similar argument 
then shows that if $\varphi_a$ satisfies Condition (A) and the walk's density $\rho$ is regular enough (e.g. in the Gaussian case), 
then the corresponding measure converges weakly to the critical standard wetting model. 
}
\end{appendices}

\bibliography{Wetting}
\bibliographystyle{alpha}

\end{document}